\documentclass[a4paper,10pt]{article}

\usepackage[margin=3cm,footskip=1cm]{geometry}

\usepackage{amsmath,amssymb,amsthm,bm,mathtools,xspace,booktabs,url}
\usepackage{graphicx,etoolbox}
\usepackage{hyperref}

\usepackage[shortlabels]{enumitem}

\setlist{
  listparindent=\parindent,
  parsep=0pt,
}
\usepackage{amssymb}

\usepackage[sort&compress]{natbib}		
\usepackage{cleveref}

\AtBeginEnvironment{thebibliography}{
  \small
  \setlength\itemsep{0.75em plus 0.5em minus 0.75em}%
}

\usepackage{caption}
\captionsetup{
  tableposition=top,
  font=small,
  labelfont=bf}

\usepackage[raggedright]{titlesec}

\numberwithin{equation}{section}
\usepackage{amsfonts}

\theoremstyle{plain} 
\newtheorem{theorem}{Theorem}[section]
\newtheorem{Lemma}[theorem]{Lemma}
\newtheorem{Proposition}[theorem]{Proposition}
\newtheorem{Corollary}[theorem]{Corollary}
\newtheorem{definition}[theorem]{Definition}

\theoremstyle{definition} 
\newtheorem{Example}[theorem]{Example}
\newtheorem{Remark}[theorem]{Remark}

\usepackage{authblk}

\makeatletter
\newcommand\CorrespondingAuthor[1]{
  \begingroup
  \def\@makefnmark{}
  \footnotetext{Corresponding author: #1}
  \endgroup
}
\makeatother

\makeatletter
\renewenvironment{abstract}{%
  \small%
  \providecommand\keywords{%
    \par\medskip\noindent\textit{Keywords:}\xspace}%
  \begin{center}%
    \bfseries \abstractname\vspace{-.5em}\vspace{\z@}%
  \end{center}%
  \quote%
}
{
\endquote}
\makeatother

%
%
\usepackage[above,below,section]{placeins}

\frenchspacing


\usepackage[all]{onlyamsmath}

\usepackage{multirow}

\pagestyle{plain} 


%
%





\usepackage{xcolor}
\usepackage{color}

\usepackage{mathrsfs}
\DeclareMathAlphabet{\mathpzc}{T1}{pzc}{m}{it}

\newcommand{\RL}{\mathbb{R}}
\newcommand{\nat}{{\mathbb N}}

\newcommand{\e}{\mathrm{e}}
\newcommand{\dd}{\mathrm{d}}

\newcommand{\Exp}{\mathbb{E}}

\newcommand{\Prob}{\mathbb{P}}
\newcommand{\Ind}{\mathbb I}
\newcommand{\Ftail}{\overline F\,}

\def\*{\discretionary{}{\hbox{\ensuremath\cdot}\thinspace}{}}

\usepackage{tabularx,ragged2e,booktabs}
\newcommand\Tstrut{\rule{0pt}{2.6ex}}         
\newcommand\Bstrut{\rule[-0.9ex]{0pt}{0pt}}   

\usepackage{pdfpages}

\begin{document}

\title{Approximation of Ruin Probabilities via Erlangized Scale Mixtures}
\author[1]{\small Oscar Peralta}
\author[2]{\small Leonardo Rojas-Nandayapa}
\author[3]{\small Wangyue Xie}
\author[3]{\small Hui Yao}
\affil[1]{Department of Applied Mathematics and Computer Science, Technical University of Denmark, Denmark, osgu@dtu.dk}
\affil[2]{Mathematical Sciences, University of Liverpool, UK, leorojas@liverpool.ac.uk}
\affil[3]{School of Mathematics and Physics, The University of Queensland, 
   Australia, \authorcr w.xie1@uq.edu.au, h.yao@uq.edu.au}
\date{}
\maketitle
\begin{abstract}

In this paper, we extend an existing scheme for  
numerically calculating the probability of ruin of a classical Cram\'er--Lundberg reserve process
having absolutely continuous but otherwise general claim size distributions.
We employ a dense class of distributions that we denominate
\emph{Erlangized scale mixtures} (ESM) and correspond to 
nonnegative and absolutely continuous distributions which can be written as a Mellin--Stieltjes convolution $\Pi\star G$ of a 
nonnegative distribution $\Pi$ with an Erlang distribution $G$.
A distinctive feature of such a class is that it contains heavy-tailed distributions.

We suggest a simple methodology for constructing a sequence of distributions having the form $\Pi\star G$ to approximate the integrated tail distribution of the claim sizes.
Then we adapt a recent result which delivers an explicit expression for the probability of
ruin in the case that the  claim size distribution is
modelled as an Erlangized scale mixture.  We provide 
simplified expressions for the approximation of the probability of ruin and construct explicit bounds for the error of approximation.  We complement our results
with a classical example where the claim sizes are heavy-tailed.

\keywords{ phase-type; Erlang; scale mixtures; infinite mixtures; heavy-tailed;
ruin probability.}

\end{abstract}

\section{Introduction}

In this paper we propose a new numerical scheme for the approximation 
of ruin probabilities in the classical compound Poisson risk model --- also known as 
Cram\'er--Lundberg risk model \citep[cf.][]{Asmussen2010}.
In such a risk model, the surplus process is modelled as a compound Poisson process
with negative linear drift and a nonnegative jump distribution $F$, the later
corresponding to the claim size distribution. The ruin probability  
within infinite horizon and initial capital $u$, denoted $\psi(u)$,  is the probability 
that the supremum of the surplus process is larger than $u$. The Pollaczek--Khinchine formula provides the exact value of $\psi(u)$, though it can be explicitly computed in very few cases. Such a formula is a functional of ${\widehat F}$, the integrated tail distribution of $F$. From here on, we will use $\psi_{\widehat F}(u)$ instead of $\psi(u)$ to denote this dependence. A useful fact is that the Pollaczek--Khinchine formula can be naturally extended in order to define $\psi_{G}(u)$ even if $G$ does not correspond to an integrated tail distribution. We are doing so throughout this manuscript.

The approach advocated in this paper is to approximate the integrated claim size distribution
$\widehat F$ by using the family  of \emph{phase-type scale mixture distributions \cite{Bladt2015}}, 
but we also consider the more common approach of approximating the claim size distribution $F$. 
The family of phase-type scale mixture distributions is dense within the class of nonnegative distributions, 
and it is formed by distributions which can be expressed as a
Mellin--Stieltjes convolution, denoted $\Pi\star G$,  
of an arbitrary nonnegative distribution $\Pi$ and a phase-type distribution $G$
\citep[cf.][]{Bingham1987}.  The Mellin--Stieltjes convolution 
corresponds to the distribution of the product between two independent random variables 
having distributions $\Pi$ and $G$ respectively:

\begin{equation*}
 \Pi\star G(u):=\int_0^\infty G(u/s)d\Pi(s)=\int_0^\infty \Pi(u/s)dG(s).
\end{equation*}

In particular, if $\Pi$ is a nonnegative discrete distribution and $\Pi\star G$ is itself  the integrated tail of a phase-type scale mixture distribution, then
an explicit computable formula for the ruin probability $\psi_{\Pi\star G}(u)$ 
of the Cram\'er--Lundberg process having integrated tail distribution $\Pi\star G$ is
given in \cite{Bladt2015}.  Hence, it is plausible that if $\Pi\star G$ 
is \emph{close enough} to the integrated tail distribution $\widehat F$ of the 
claim sizes, then we can use $\psi_{\Pi\star G}(u)$ as an approximation for $\psi_{\widehat F}(u)$,
the ruin probability of a Cram\'er--Lundberg process having claim size distribution $F$.
One of the key features of the class of phase-type scale mixtures is that if $\Pi$ has unbounded
support, then $\Pi\star G$ is a  heavy-tailed distribution \citep{SuChen2006,Tang2008b,Nardo2015}, thus 
confirming the hypothesis that the class
of phase-type scale mixtures is more appropriate for approximating tail-dependent
quantities involving heavy-tailed distributions.  In contrast, the class of classical
phase-type distributions is light-tailed and approximations derived from this approach
may be inaccurate in the tails \citep[see also][for an extended discussion]{VatamidouAdanVlasiouZwart2014}.

Our contribution is to propose a systematic methodology to approximate any continuous integrated tail
distribution  $\widehat F$ using a  particular  subclass of phase-type scale mixtures called 
\emph{Erlangized scale mixtures} (ESM).  The proposed approximation is particularly precise
in the tails and the number of parameters remains controlled.
Our construction requires a sequence  $\{\Pi_m:m\in\nat\}$ of nonnegative discrete distributions 
having the property $\Pi_m\rightarrow \widehat F$ (often taken as a discretization of 
the target distribution over some countable subset of the support of $\widehat{F}$), and a sequence of Erlang distributions with equal shape and rate parameters, denoted
$G_m\sim \mbox{Erlang}(\xi(m),\xi(m))$. If the sequence $\xi(m)\in\nat$ is increasing and unbounded, then 
$\Pi_m\star G_m\rightarrow \widehat F$.  Then we can adapt the results in 
\cite{Bladt2015} to compute $\psi_{\Pi_m\star G_m}(u)$, and use this as an approximation of 
the ruin probability of interest.

To assess the quality of $\psi_{\Pi_m\star G_m}(u)$ as an approximation of $\psi_{\widehat F}(u)$ 
we identify two sources of theoretical error. The first source of error comes from approximating 
$\widehat F$ via $\Pi_m$, so we refer to this as
 the \emph{discretization error}. The second source of error is due to the convolution with $G_m$, so
this will be called the \emph{Erlangization error}.  The two errors are closely intertwined
so  it is difficult to make a precise assessment of the effect of each of them in
the general approximation.
Instead, we use the triangle inequality to separate these as follows
\begin{equation*}
\underbrace{\left|\psi_{\widehat F}(u)-\psi_{\Pi_m\star G_m}(u)\right|}_{\text{Approximation error}}
 \le \underbrace{\left|\psi_{\widehat F}(u)-\psi_{\widehat F\star G_m}(u)\right|}_{\text{Erlangization error}}
  +\underbrace{\left|\psi_{\widehat F\star G_m}(u)-\psi_{\Pi_m\star G_m}(u)\right|}_{\text{Discretization error}}.
\end{equation*}
Therefore, the error of approximating $\psi_{\widehat{F}}(u)$ with $\psi_{\Pi_m\star G_m}(u)$ can be bounded above with the aggregation of the Erlangization error and the discretization error.
In our developments below, we construct explicit tight bounds for each source of error.

We remark that the general formula for $\psi_{\Pi\star G}(u)$ in \cite{Bladt2015} is computational intensive and can be difficult or even infeasible to implement since it is given as an 
infinite series with terms involving products of finite dimensional matrices.  We show that for our particular model, $\psi_{\Pi\star G_m}(u)$ can be simplified down to a  manageable formula involving binomial coefficients instead of computationally expensive matrix operations.
In practice, the infinite series can be computed only up to a fine number of terms, but as
we will show, this numerical error can be controlled by selecting an appropriate distribution $\Pi$.  
Such a truncated approximation of $\psi_{\Pi\star G}(u)$ will be denoted $\widetilde{\psi}_{\Pi\star G}(u)$.
We provide explicit bounds for the  numerical error induced by truncating the infinite series.

All things considered, we contribute to the existing literature for computing ruin probabilities for the classical Cram\'er--Lundberg model by proposing a new practical numerical scheme. Our method, coupled with the bounds for the error of approximation, provides an attractive alternative for computing ruin probabilities based on a simple, yet effective idea. 

The approach described above is a further extension to  
the use of phase-type distributions for approximating
general claim size distributions \citep[cf.][]{Neuts1975,Latouche1999,Asmussen2003}.
Several attempts to approximate the probability of ruin for Cram\'er--Lundberg model have been made {(see \cite{VatamidouAdanVlasiouZwart2012} and references therein)}.
{A recent and similar approach can be found in \cite{SantanaGonzalezRincon2016} which uses discretization and Erlangizations argument as its backbone.  We emphasise here that we address the problem of finding the probability of ruin differently. Firstly, we propose to directly approximate the integrated tail distribution instead of the claim size distribution. This will yield far more accurate approximations of the probability of ruin. 
Secondly, since we investigate the discretization and the Erlangization part separately, we are able to provide tight error bounds for our approximation method. This will prove to be helpful in challenging examples such as the one presented here: the heavy-traffic Cram\'er--Lundberg model with Pareto distributed claims. Lastly, each approximation of ours is based on a mixture of Erlang distributions of fixed order, while the approach in \cite{SantanaGonzalezRincon2016} is based on a mixture of Erlang distributions of increasing order. 
By keeping the order of the Erlang distribution in the mixture fixed, we can smartly allocate more computational
resources in the discretization part, yielding an overall better approximation.  More importantly, 
we find the use of ESM more natural because 
increasing the order of the Erlang distributions in the mixture translates in having different levels of accuracy of Erlangization at different points. The choice of having sharper Erlangization in the tail of the distribution than in the body seems arbitrary and is actually not useful tail-wise, given that the tail behavior of $\Pi\ast G_m$ is the same for each $\xi(m)\ge 1$.}


The rest of the paper is organized as the follows.
Section \ref{mysec2} provides an overview of the main concepts and methods.
In Section \ref{mysec3}, we present the methodology for constructing 
a sequence of distributions of the form $\Pi_m\star G_m$ approximating the integrated tail of a general claim 
size distribution $F$. Based on the results of \cite{Bladt2015}, we introduce a 
simplified infinite series representation of the ruin probability $\psi_{\Pi_m\star G_m}$.
In Section \ref{mysec4}, we construct the bound for the error of approximation
$\left|\psi_{\widehat F}-\psi_{\Pi\star G}\right|$.
In Section \ref{sec: Numerical error bound}, we provide a bound for the numerical
error of approximation induced by truncating the infinite series representation of
$\psi_{\Pi_m\star G_m}$. A numerical example illustrating the sharpness
of our result is given in Section \ref{sub.Implementation}.
Some conclusions are drawn in 
Section \ref{mysec7}. 

\section{Preliminaries}
\label{mysec2}
In this section we provide a summary of basic concepts needed for this paper.  In subsection
\ref{sub.PH} we introduce the family of classical 
phase-type (PH) distributions and their extensions to phase-type scale mixtures and
infinite dimensional phase-type (IDPH) distributions. We will refer to the former class of distributions
as \emph{classical} in order to make a clear distinction from the two later
classes of distributions.

In section \ref{sub.Erlangization} we introduce a systematic method for approximating nonnegative distributions
within the class of phase-type scale mixtures; such a method will be called approximation via \emph{Erlangized scale
mixtures}.  The resulting approximating distribution will be more tractable due to the special structure of 
the Erlang distribution.
\subsection{Phase-type scale mixtures}\label{sub.PH}
A phase-type (PH) distribution  corresponds 
to the distribution of the absorption time of a Markov jump 
process $\{X_t\}_{t\geq 0}$
with a finite state space $E=\{0,1,2,\cdots,p\}$. The states $\{1,2,\cdots,p\}$ 
are transient while the state $0$ is an absorbing state. Hence, phase-type 
distributions are characterized by a $p$-dimensional row vector 
$\boldsymbol{\beta}=(\beta_1,\cdots,\beta_p)$, corresponding to the 
initial probabilities of each of the transient states of the Markov jump process, and an intensity matrix 
\begin{equation*}
\mathbf{Q}=\left(\begin{array}{cc}0 &\mathbf{0}  \\
             \boldsymbol{\lambda}  & \mathbf{\Lambda} \end{array}\right).
\end{equation*}
The subintensity matrix $\mathbf\Lambda$ corresponds to the transition rates among
the transient states while the column vector $\mathbf\lambda$ corresponds 
to the exit probabilities to the absorption state.  Since 
$\bm{\lambda}=- \mathbf{\Lambda e}$, 
where $\bm{e}$ is a column vector with all elements to be $1$, 
then the pair $(\bm{\beta},\mathbf{\Lambda})$
completely characterizes the absorption distribution, the notation 
$\mbox{PH}(\boldsymbol{\beta},\boldsymbol{\Lambda})$ is reserved for such a distribution. The
density function, cumulative distribution function and expectation of 
$\mbox{PH}(\boldsymbol{\beta},\boldsymbol{\Lambda})$
are given by the following closed-form expressions given 
in terms of matrix exponentials:
\begin{equation*}
g(y)=\boldsymbol{\beta}\e^{\boldsymbol{\Lambda} y}\boldsymbol{\lambda},\quad
G(y)=1-\boldsymbol{\beta}\e^{\boldsymbol{\Lambda} y}\boldsymbol{e},\quad
\int_0^{\infty}y\dd G(y)=- \boldsymbol{\beta}\boldsymbol{\Lambda}^{-1}\boldsymbol{e}.
\end{equation*}

A particular example of PH distribution which is of interest in our later
developments is that of an Erlang distribution. 
It is simple to deduce that the Erlang distribution 
with parameters ($\lambda,m$) has a PH-representation given by the
the $m$-dimensional vector $\boldsymbol{\beta}=(1,0,\cdots,0)$  and the
$m\times m$ dimensional matrix 
\begin{equation*}
\mathbf{\Lambda}=\left( \begin{array}{cccc} -\lambda & \lambda         &    & \\
                                               &\ddots  &\ddots   & \\
                                         &     &-\lambda    &\lambda\\
                                             &          &   &-\lambda
\end{array}\right).
\end{equation*}
We denote 
$\mbox{Erlang}(\lambda,m)$.
In this paper we will be particularly interested in the
sequence of $G_m\sim\mbox{Erlang}(\xi(m),\xi(m))$ distributions with $\xi(m)\to\infty$.
These type of sequencesare associated to a methodology 
often known as Erlangization (approximation of a constant via Erlang random variables). 
Using Chebyshev inequality, it is simple to prove that 
$G_m(y)\to \Ind_{[1,\infty)}(y)$ weakly, where $\Ind$ is the indicator function.  

Next, we turn our attention to the class of phase-type scale mixture distributions
\citep{Bladt2015}.
In this paper, we introduce such a class via \emph{Mellin--Stieltjes convolution} 
\begin{equation}
\label{MS convolution}
 \Pi\star G(u):=\int_0^\infty G(u/s)d\Pi(s)=\int_0^\infty \Pi(u/s)dG(s),
\end{equation}
where $G\sim\mbox{PH}(\boldsymbol{\beta},\boldsymbol{\Lambda})$
and $\Pi$ is a proper nonnegative  distribution.

Mellin--Stieltjes convolutions  
can be interpreted in two equivalent ways.  The most common one is to interpret the distribution  $\Pi\star G$
as \emph{scaled mixture distribution}; for instance, $\int G(u/s)d\Pi(s)$ can be seen as a mixture
of the scaled distributions $G_s(u)=G(u/s)$ with scaling distribution $\Pi(s)$ (and vice versa).
However, it is often more practical to see that $\Pi\star G$ corresponds to the distribution 
of the product of two independent random variables having distributions $\Pi$ and $G$. 
Furthermore, the integrated tail of $\Pi\star G$ is given in the following proposition.

\begin{Proposition}\label{Th.Excess.Dist.FG}
 Let $\Pi$ and $G$ be independent  nonnegative distributions,
 then the integrated tail of $\Pi\star G$ is given by
  \begin{equation*}
   \widehat{\Pi\star G} =  H_\Pi\star\widehat G, 
  \end{equation*}
  where $\dd H_\Pi(s)=s\dd \Pi(s)/\mu_\Pi$ is called the \emph{moment distribution of $\Pi$}
  and $\widehat{G}$ is the integrated
  tail of $G$. We use $\mu$ to denote the expecation.
\end{Proposition}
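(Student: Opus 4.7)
The plan is to unfold both sides of the claimed identity $\widehat{\Pi\star G} = H_\Pi\star \widehat G$ from the definitions and show they agree pointwise by a Fubini argument followed by a rescaling.

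First I would write the integrated tail of $\Pi\star G$ explicitly. By definition,
\begin{equation*}
\widehat{\Pi\star G}(x) \;=\; \frac{1}{\mu_{\Pi\star G}}\int_0^x \overline{\Pi\star G}(t)\,\dd t,
\end{equation*}
where the mean factors as $\mu_{\Pi\star G} = \mu_\Pi \mu_G$ because $\Pi\star G$ is the distribution of a product of independent random variables with distributions $\Pi$ and $G$. Expressing the survival function via \eqref{MS convolution}, this becomes
\begin{equation*}
\widehat{\Pi\star G}(x) \;=\; \frac{1}{\mu_\Pi \mu_G}\int_0^x \!\int_0^\infty \overline G(t/s)\,\dd\Pi(s)\,\dd t.
\end{equation*}

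Next I would apply Tonelli's theorem (the integrand is nonnegative, so this is unconditional) to interchange the order of integration, and then substitute $u = t/s$ in the inner integral for fixed $s>0$. This substitution yields $\int_0^x \overline G(t/s)\,\dd t = s\int_0^{x/s}\overline G(u)\,\dd u = s\mu_G \widehat G(x/s)$. Plugging this back gives
\begin{equation*}
\widehat{\Pi\star G}(x) \;=\; \frac{1}{\mu_\Pi \mu_G}\int_0^\infty s\,\mu_G\,\widehat G(x/s)\,\dd\Pi(s) \;=\; \int_0^\infty \widehat G(x/s)\,\frac{s\,\dd\Pi(s)}{\mu_\Pi},
\end{equation*}
which is exactly $H_\Pi\star\widehat G(x)$ by recognizing $\dd H_\Pi(s) = s\,\dd\Pi(s)/\mu_\Pi$ and using the Mellin--Stieltjes convolution formula \eqref{MS convolution}. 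I would also briefly note the boundary case $s=0$ contributes zero to $H_\Pi$, so the integration domain does not cause any issue.

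The proof is essentially a direct calculation, and the only care I would take is with the implicit assumption that the relevant means $\mu_\Pi$ and $\mu_G$ are finite (otherwise neither side of the identity is well-defined as a proper distribution). Once finiteness is in place, no genuine obstacle remains: Tonelli is immediate from nonnegativity, and the scaling identity $\int_0^x \overline G(t/s)\,\dd t = s\mu_G\widehat G(x/s)$ is the entire heart of the proof.
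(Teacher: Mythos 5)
Your proof is correct and follows essentially the same route as the paper's: factor the mean as $\mu_{\Pi\star G}=\mu_\Pi\mu_G$ via the product-of-independent-variables interpretation, write the tail of $\Pi\star G$ as an integral against $\dd\Pi$, swap the order of integration, and rescale $t\mapsto t/s$ to produce $s\,\mu_G\,\widehat G(u/s)$. The only difference is that you make the Tonelli step and the finiteness of the means explicit, which the paper leaves implicit.
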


\begin{proof}
Since the Mellin--Stieltjes convoluton of $\Pi$ and $G$ can be seen as the distribution of two independent random variables having distribution $\Pi$ and $G$, then $\mu_{\Pi\star G}=\mu_{\Pi}\mu_G$.

Observe that
\begin{align}
\widehat{\Pi\star G}(u) & = \frac{1}{\mu_\Pi\cdot\mu_G}\int_0^u \left(1-\Pi\star G(t)\right)\dd t \nonumber\\
& = \frac{1}{\mu_\Pi}\int_0^u \int_0^\infty \dfrac{1-G(t/s)}{\mu_G}\dd \Pi(s) \dd t\nonumber\\
& = \int_0^\infty  \widehat{G} (u/s) \frac{ s \dd \Pi(s)}{\mu_\Pi}\nonumber\\
& = \int_0^\infty  \widehat{G} (u/s) \dd H_\Pi(s)= H_\Pi\star\widehat G(u)\nonumber.\label{eq: Spectral decomposition}
\end{align}
\end{proof}

\begin{Remark}
\label{PH Spectral decomposition}
If $G$ is a PH distribution $G\sim\mbox{PH}(\bm{\beta},\bm{\Lambda})$, then $\widehat{G}\sim\mbox{PH}(-\bm\beta\bm\Lambda^{-1}/\mu_{G},\bm\Lambda)$ is also a PH distribution
\citep[cf.][Corollary 2.3.(b), Chapter IX]{Asmussen2010}.
\end{Remark}

The following can be seen as a particular case of Proposition \ref{Th.Excess.Dist.FG} when $G$ corresponds to the point mass at one probability measure, however, a self-contained proof is provided.

\begin{Proposition}\label{Th.Excess.Dist F}
 Let $\dd H_F(s): = { s \dd F(s)}/{\mu_F}$ be the moment distribution of $F$ 
 and $U\sim\mbox{U}(0,1)$. Then
  \begin{equation*}
    \widehat F = H_F\star U. 
  \end{equation*}
\end{Proposition}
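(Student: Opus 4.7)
The plan is to compute $H_F\star U(u)$ directly from the definition of the Mellin--Stieltjes convolution in \eqref{MS convolution} and then show that the result equals $\widehat F(u)$ after an integration by parts. The uniform distribution is convenient here because $U(x) = \min(x,1)$ on $[0,\infty)$, which makes the kernel in the convolution particularly simple to handle.

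First I would apply the definition to obtain
\begin{equation*}
H_F\star U(u) \;=\; \int_0^\infty U(u/s)\,\dd H_F(s) \;=\; \int_0^u \dd H_F(s) + \int_u^\infty \frac{u}{s}\,\dd H_F(s),
\end{equation*}
splitting according to whether $u/s \geq 1$ or $u/s < 1$. Next I would substitute $\dd H_F(s) = s\,\dd F(s)/\mu_F$ to simplify each piece: the first integral becomes $H_F(u) = \mu_F^{-1}\int_0^u s\,\dd F(s)$, while in the second the factor $s$ from $\dd H_F$ cancels with the $1/s$ from the kernel, producing $\mu_F^{-1} u\,(1-F(u))$. Thus
\begin{equation*}
H_F\star U(u) \;=\; \frac{1}{\mu_F}\left[\int_0^u s\,\dd F(s) + u\,(1-F(u))\right].
\end{equation*}

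Finally I would match this against $\widehat F(u) = \mu_F^{-1}\int_0^u(1-F(t))\,\dd t$ by performing an integration by parts on $\widehat F$:
\begin{equation*}
\int_0^u (1-F(t))\,\dd t \;=\; \bigl[t(1-F(t))\bigr]_0^u + \int_0^u t\,\dd F(t) \;=\; u(1-F(u)) + \int_0^u s\,\dd F(s),
\end{equation*}
which coincides exactly with the bracketed expression above, completing the proof. I do not foresee any real obstacle here; the argument is essentially a bookkeeping exercise, and the only mild care needed is justifying the split of the integral at $s=u$ (straightforward since $U$ is bounded and $H_F$ is a proper probability distribution whenever $F$ has finite mean) and noting that the boundary term at $t=0$ vanishes since $1-F(0)$ is bounded.
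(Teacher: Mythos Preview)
Your proof is correct and arrives at the same intermediate expression as the paper, namely $\mu_F^{-1}\int_0^\infty (u\wedge s)\,\dd F(s)$. The only difference is in how $\widehat F(u)$ is connected to this expression: the paper writes $1-F(t)=\int_0^\infty\Ind_{(t,\infty)}(s)\,\dd F(s)$ and applies Tonelli to swap the order of integration, whereas you obtain the same identity by integration by parts. Both arguments are equally short and elementary; the Tonelli route avoids having to remark on the boundary term at $t=0$, while your integration-by-parts version makes the algebraic structure of the two pieces $\int_0^u s\,\dd F(s)$ and $u(1-F(u))$ a bit more explicit.
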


\begin{proof}
\begin{align*}
\widehat{F}(u)  = \frac{1}{\mu_F}\int_0^u \left(1-F(t)\right)\dd t
& = \frac{1}{\mu_F}\int_0^u \int_0^\infty  \Ind_{(t,\infty)}(s)\dd F(s) \dd t\\
& = \frac{1}{\mu_F}\int_0^\infty \left\{\int_0^u  {\Ind}_{[0,s)}(t)\dd t\right\}\dd F(s)\\
& = \frac{1}{\mu_F}\int_0^\infty \left\{u\wedge s\right\}\dd F(s)\\
& = \int_0^\infty \left\{(u/s)\wedge 1\right\}\frac{s \dd F(s)}{\mu_F}= H_F\star U(u),
\end{align*}
where the second equality follows from Tonelli's theorem and from the fact that for $s,t\ge 0$, $\Ind_{(t,\infty)}(s) = \Ind_{[0, s)}(t)$.
\end{proof}

In this paper we are particularly interested in the case where $\Pi$ is a discrete distribution 
having support $\{s_i:i\in\nat\}$ with $0<s_1< s_2<\dots$ and vector of probabilities $\boldsymbol{\pi}=(\pi_1,\pi_2,\cdots)$ such that $\boldsymbol{\pi\e_{\infty}}=1$, where $\boldsymbol{\e_{\infty}}$ is an infinite dimensional column vector with all elements to be $1$.
In such a case, the distribution of $\Pi\star G$ can be written as
\begin{equation*}\label{IDPH}
(\Pi\star G)(u)=\sum_{i=1}^{\infty}G(u/s_i)\pi_i,\quad u\geq 0.
\end{equation*}
Since the scaled phase-type distributions $G(u/s_i)\sim\mbox{PH}(\boldsymbol\beta,\boldsymbol\Lambda/s_i)$ 
are PH distributions again, we choose
to call $\Pi\star G$ a \emph{phase-type scale mixture} distribution.
The class of phase-type scale mixtures was first introduced in \citep{Bladt2015}, though
they restricted themselves to distributions $\Pi$ supported over the natural numbers.  
One of the main features of the class of \emph{phase-type scale mixtures} having a nonnegative discrete scaling
distribution $\Pi$ is that it forms a subclass of the so called
\emph{infinite dimensional phase-type} (IDPH) distributions; indeed, in such a case
$\Pi\star G$ can be interpreted as the distribution of absorption time 
of a Markov jump process with one absorbing state  
and infinite number of transient states, having representation 
($\boldsymbol{\alpha}$, $\boldsymbol{T}$) where
$\boldsymbol{\alpha}=$($\boldsymbol{\pi}\otimes\boldsymbol{\beta}$), 
the Kronecker product of $\boldsymbol{\pi}$ and $\boldsymbol{\beta}$, and 
\begin{equation*}
\mathbf{T}=\left( \begin{array}{cccc} \boldsymbol{\Lambda}/s_1 & 0         &0    &\cdots\\
                                      0         &\boldsymbol{\Lambda}/s_2  &0   &\cdots \\
                                      0    &0     &\boldsymbol{\Lambda}/s_3    &\cdots\\
                                      \vdots         &\vdots          &\vdots    &\ddots
\end{array}\right).
\end{equation*}
Finally, if the underlying phase-type distribution $G$ is
Erlang, and $\Pi$ is any nonnegative discrete distribution, then we say that 
the distribution $\Pi\star G$ is an \emph{Erlangized scale mixture}.  We will discuss
more properties of this distribution in later sections.

All the classes of distributions defined above are particularly attractive for modelling purposes
in part because they are dense in the nonnegative distributions (both the class of infinite dimensional
phase-type distributions and the class of phase-type scale mixtures 
trivially inherit the dense property from classical phase-type distributions, while the proof that the class of 
Erlangized scale mixtures being dense is simple and given in the next subsection).  
The class of infinite dimensional phase-type distributions contains heavy-tailed distributions but it is
mathematically intractable.  The rest of the classes defined above remain dense,
contain both light and heavy-tailed distributions
and are more tractable from both  theoretical and computational perspectives.  
Here, we concentrate on a particular subclass of the phase-type scale mixtures defined in \cite{Bladt2015}
by narrowing such a class to Erlangized scale mixtures having scaling distribution $\Pi$ with general discrete support.

\subsection{Approximations via Erlangized scale mixtures}
\label{sub.Erlangization}

Next we present a methodology for approximating an arbitrary nonnegative
distribution $\Pi$ within the class of Erlangized scale mixtures.
The construction is simple and based on the following straightforward result.
\begin{Proposition}
\label{Erlang approximation}
Let $\Pi_m$ be a sequence of nonnegative discrete distributions such that 
$\Pi_m\to \Pi$ and $G_m\sim\text{Erlang}(\xi(m),\xi(m))$. Then
\begin{equation*}
  \Pi_m\star G_{m}{\longrightarrow}\Pi.
\end{equation*}
\end{Proposition}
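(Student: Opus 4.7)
The plan is to prove weak convergence by interpreting each Mellin--Stieltjes convolution as the distribution of a product of two independent random variables, and then applying Slutsky's theorem. Specifically, let $S_m$ and $Y_m$ be independent with $S_m\sim\Pi_m$ and $Y_m\sim G_m$. By the probabilistic interpretation of the Mellin--Stieltjes convolution discussed right after \eqref{MS convolution}, $\Pi_m\star G_m$ is the distribution of the product $S_m Y_m$. Likewise, if $S\sim\Pi$, the target $\Pi$ is the distribution of $S\cdot 1$. So it suffices to show $S_m Y_m\Rightarrow S$.

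First I would handle the Erlang factor. Since $Y_m\sim\text{Erlang}(\xi(m),\xi(m))$, direct computation gives $\Exp Y_m=1$ and $\Var Y_m=1/\xi(m)\to 0$ as $m\to\infty$ (by assumption $\xi(m)\to\infty$). Chebyshev's inequality then yields, for any $\varepsilon>0$,
\begin{equation*}
  \Prob(|Y_m-1|>\varepsilon)\le \frac{\Var Y_m}{\varepsilon^2}=\frac{1}{\xi(m)\varepsilon^2}\longrightarrow 0,
\end{equation*}
so $Y_m\to 1$ in probability. This is exactly the weak convergence $G_m\Rightarrow \Ind_{[1,\infty)}$ already mentioned in the paragraph introducing the Erlangization family, now restated as convergence in probability to the constant $1$.

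Next I would assemble the two convergences. By hypothesis $\Pi_m\to\Pi$ weakly, i.e.\ $S_m\Rightarrow S$. Slutsky's theorem (the product version) states that if $X_m\Rightarrow X$ and $Y_m$ converges in probability to a constant $c$, then $X_mY_m\Rightarrow cX$; applied with $c=1$ this yields $S_m Y_m\Rightarrow S$, which is precisely $\Pi_m\star G_m\to\Pi$. Independence of $S_m$ and $Y_m$ is needed only to identify the law of the product with the Mellin--Stieltjes convolution; Slutsky's conclusion itself does not require independence.

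I do not anticipate a genuine obstacle here: everything reduces to a Slutsky argument once the Erlang family is shown to concentrate at $1$. The only minor care concerns the mode of convergence: one must read $\Pi_m\to\Pi$ as weak convergence of distribution functions (equivalently, $S_m\Rightarrow S$), which is the natural reading given the use of weak convergence elsewhere in this subsection and the discrete nature of the approximating measures $\Pi_m$. If instead pointwise convergence at continuity points is desired as a corollary, it follows from the portmanteau theorem applied to the weak limit.
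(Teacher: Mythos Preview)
Your proposal is correct and takes essentially the same approach as the paper: the paper's proof is a two-line invocation of Slutsky's theorem after noting that $G_m\Rightarrow\Ind_{[1,\infty)}$, and you have simply filled in the details (Chebyshev for the Erlang factor, the product form of Slutsky) that the paper leaves implicit. There is no meaningful difference in strategy.
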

\begin{proof}
  Since the sequence $G_{m}$ converges weakly to $\Ind_{[1,\infty)}$, then the result
  follows directly from an application of Slutsky's theorem \citep[cf. Theorem 7.7.1][]{Ash2000}.
\end{proof}
For convenience, we refer to this method of approximation as \emph{approximation via Erlangized scale mixtures}.
The  sequence of discrete distributions $\Pi_m$ can be seen as rough 
approximations of the nonnegative distribution $\Pi$. Since $G_{m}$ is an 
absolutely continuous distribution with respect to the Lebesgue measure, then the
Mellin--Stieltjes convolution has a \emph{smoothing}
effect over the rough approximating distributions $\Pi_m$.  
Indeed, $\Pi_m\star G_{m}$ is an absolutely continuous 
distribution with respect to the Lebesgue measure (see Figure
\ref{Fig.Erlangization}). 


\begin{center}
\begin{figure}[h]
 \caption{Comparison between a target cumulative distribution function $\widehat F$
  and its \emph{Erlangized scale mixture} approximations $\Pi_m\star G_m$.  In this example we have
  considered a Pareto distribution supported over $[0,\infty)$.
  The approximating distributions $\Pi_m$ were taken as discretizations of the target distribution $\widehat F$ 
  over some geometric progression $\{\e^{t_k}: k\in\mathbb{Z},t_k=k/m\}$, while we take $\xi(m)=m$, so $G_m\sim\mbox{Erlang}(m,m)$.}
  \label{Fig.Erlangization}
\includegraphics[width=\textwidth, angle = 0]{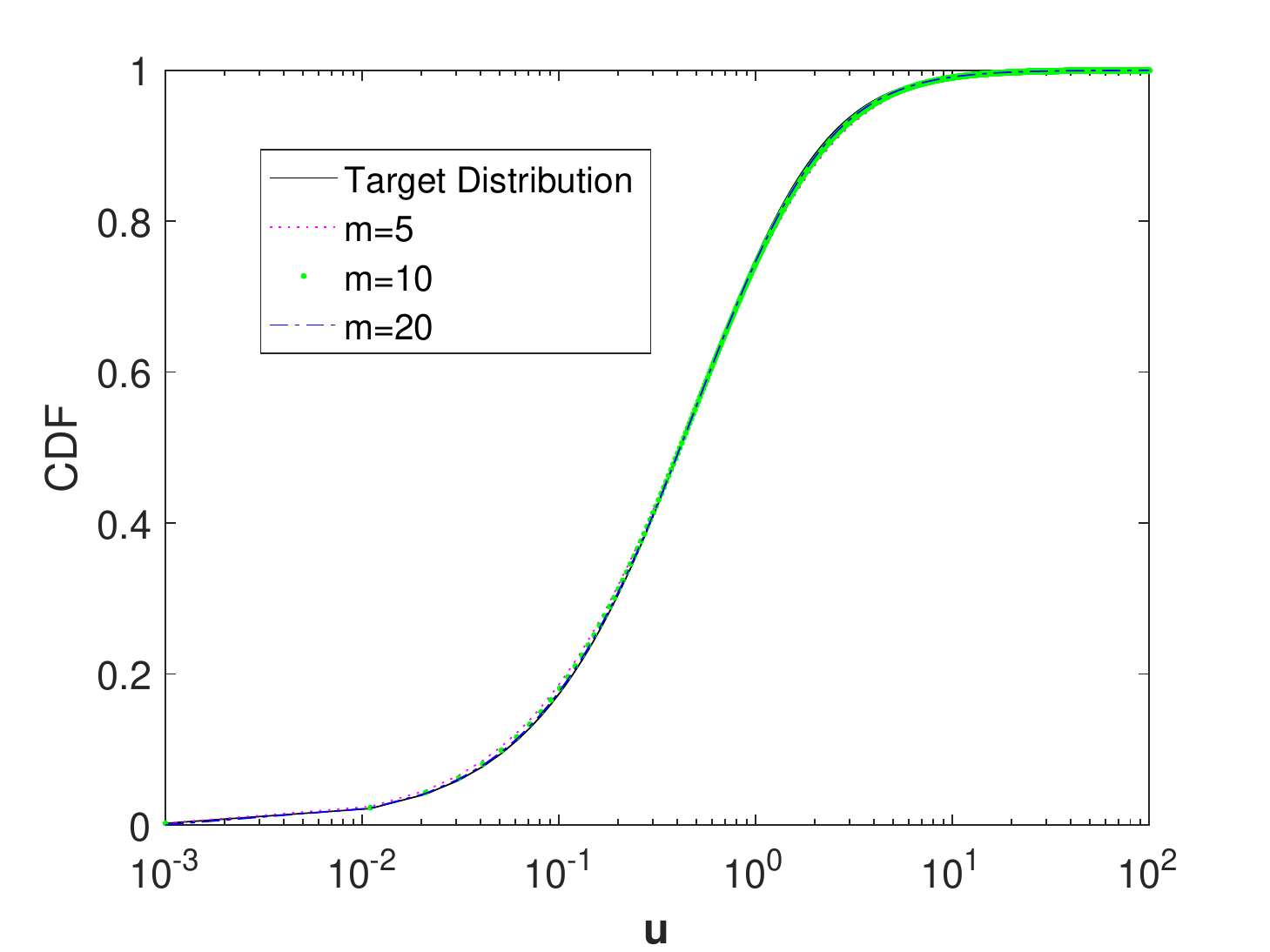}
\end{figure}
\end{center}

\section{Ruin probabilities}
\label{mysec3}

In this section we introduce a method of approximation for the
ruin probability in the Cram\'er--Lundberg risk model using Erlangized scale mixtures.
We  apply the results of \cite{Bladt2015} to obtain expressions for the ruin
probability in terms of infinite series involving operations with
finite dimensional arrays, and  exploit the simple structure of the Erlang distribution 
to obtain explicit formulas which will be free of matrix operations.
  
For constructing approximations of the ruin probability we follow two alternative approaches.  
In the first approach we approximate directly the
integrated tail distribution $\widehat F$ via Erlangized scale mixtures and is the one that
we advocate in this paper, we shall call it \emph{approximation A}. This 
straightforward approach delivers explicit formulas which are simple to write and implement;  
as we will see, the approximations obtained are very accurate.  However, the approximation 
obtained by using this approach cannot be easily related to the probability of ruin of some reserve
processes because we cannot identify an Erlangized scale mixture as the integrated tail of a
phase-type scale mixture.  Therefore, an approximating distribution for the claim sizes is not 
immediately available in this setting.  It is also required to have an explicit expression for the integrated tail
distribution $\widehat F$. 

A second approach, which is named as \emph{approximation B}, is also analysed where 
the claim size distribution is approximated with an Erlangized scale mixture. This is equivalent to approximating
the integrated tail $\widehat{F}$ with the integrated tail distribution of an Erlangized scale mixture distribution.
As we will show later, such an integrated tail distribution is in the class of phase-type distributions
so similar explicit formulas for the ruin probability are obtained.
This approach can be considered more natural but the resulting expressions are more complex and
the approximations are less accurate.
The error of approximation is bigger as a result of the \emph{amplifying} effect of integrating the tail probability
of the approximating distribution. Its implementation is more involved and the computational times are much slower when compared to the results delivered using approximation A.  

We remark that approximation B is the more commonly used, like for instance in \cite{Bladt2015} and \cite{SantanaGonzalezRincon2016}.  Thus we have included its analysis for comparison purposes.

The remaining content of this section is organised as follows: in subsection \ref{sub.Cramer} we introduce some
basic concepts of ruin probabilities in the classical Cram\'er--Lundberg risk model.
The two approximations of the ruin probability via Erlangized scale mixtures
are presented in subsection \ref{sub.RPviaErlang}.

\subsection{Ruin probability in the Cram\'er--Lundberg risk model}
\label{sub.Cramer}

We consider the classical compound Poisson risk model \citep[cf.][]{Asmussen2010}:
\begin{equation*}
R_t=u+t-\sum_{k=1}^{N_t}X_k.
\end{equation*} 
Here $u$ is the initial reserve of an insurance company, the premiums flow in 
at a rate $1$ per unit time $t$,
$X_1, X_2,\cdots$ are i.i.d. claim sizes with common distribution $F$ and mean $\mu_F$,
$\{N_t\}_{t\geq 0}$ is a Poisson process with rate $\gamma$, 
denoting the arrival of claims. So $R_t$ is a risk model for the time evolution 
of the reserve of the insurance company.
We say that ruin occurs if and only if the reserve ever drops 
below zero; we denote $\psi_{\widehat F}(u):=\inf\{R_t<0:t>0\}$.

For such a model, the well-known Pollaczek--Khinchine formula \citep[cf.][]{Asmussen2010}
implies that the ruin probability can be expressed in terms
of convolutions:
\begin{equation}
\label{PK formula}
\psi_{\widehat F}(u)=(1-\rho)\sum_{n=1}^{\infty}\rho^n\overline{\widehat{F}^{\ast n}}(u),
\end{equation}
where $\rho=\gamma\mu_F<1$ is the average claim amount per unit time, $F^{\ast n}$ denotes the $n$th-fold
convolution of $F$, $\Ftail:=1-F$ denotes the tail probability of $F$, and 
$\widehat{F}$ is the integrated tail distribution, also known as the stationary excess distribution:
\begin{equation*}
\widehat{F}(u)=\dfrac{1}{\mu_F}\int_0^u \Ftail(t)\dd t.
\end{equation*}

The calculation of ruin probability is conveniently approached via renewal theory.
The ruin probability $\psi_{\widehat{F}}(u)$ of the classical
Cram\'er--Lundberg process can be written as the probability that a terminating renewal 
process reaches level $u$.  In such a model, the distribution of the renewals is defective, and
given by $\rho\widehat F(u)$. 
In particular, if the renewals follow a defective phase-type scale mixture distribution 
with distribution $\rho \Pi\star G$ with $0<\rho<1$, then \cite{Bladt2015} derived the  the probability that the lifetime of the renewal is larger than $u$ is given by
\begin{equation}
\label{eq: Matrix exponential ruin}
\psi_{\Pi\star G}(u)=\rho\bm{\alpha}\e^{(\bm{T}+\rho\bm{t\alpha})u}\bm{e}_{\infty},
\end{equation} 
where $\bm{\alpha}=(\bm{\pi}\otimes\bm{\beta})$, $\bm T=(\bm s\times \bm I_\infty)^{-1}\otimes\bm\Lambda$
and $\bm t=-\bm T\bm e_\infty$. Here $\bm s=(s_1,s_2,\cdots)$, $\bm I$ is an identity matrix and $\bm I_{\infty}$ is that of infinite dimension. The formula above is not of practical
use because  the vectors $\boldsymbol\alpha$,  $\boldsymbol{t}$ and  the matrix 
$\boldsymbol{T}$ have infinite dimensions.   However, using the special structure of $\bm{T}$, 
they further refined the formula above and expressed $\psi_{\Pi\star G}$ as
an infinite series involving matrices and vectors of finite dimension which characterize 
the underlying distributions $\Pi$ and $G$.

Next, we obtain the explicit formula for $\psi_{\Pi\star G}(u)$ in terms of the parameters characterising the  
renewal distribution $\Pi\star G$ (equivalently the integrated tail distribution).  
This is a slight generalization of the results given in \cite{Bladt2015} who implicitly assumed that $\Pi\star G$ is the integrated tail of phase-type scale mixture distribution, so their results are given instead in terms of the parameters 
characterising the underlying claim size distribution.
{For simplicity of notation, we will write $G_m\sim$Erlang($\xi,\xi$) instead of 
Erlang($\xi(m),\xi(m)$) for the rest of the paper.}
\begin{Proposition}[\cite{Bladt2015}]
\label{renewal.PH}
Let $0<\rho<1$,
\begin{equation}
\label{BladtAlgorithmTail}
\psi_{\Pi\star G_m}(u)=\sum_{n=0}^{\infty}\kappa_n\dfrac{(\theta u/s_1)^n\e^{-\theta u/s_1}}{n!},
\end{equation}
where $\theta$ is the largest diagonal element of $-\bm{\Lambda}$ and 
\begin{align*}
\kappa_n&=\begin{cases}
  \rho,&n=0,\\[.5cm]
  \rho\left[\dfrac{ s_1}{\theta}
  \left(\sum\limits_{i=0}^{n-1}\kappa_{n-1-i}\sum\limits_{j=1}^{\infty}\dfrac{\pi_j}{s_j} B_{ij}\right) 
    +\sum\limits_{j=1}^{\infty}{\pi_j}C_{nj}\right],&n>0,
\end{cases}
\end{align*}
where
\begin{align*}
{B}_{ij}&:=\bm{\beta}(\bm{I}+(s_j\theta/s_1)^{-1}\bm{\Lambda})^i\bm{\lambda},&
{C}_{nj}&:=\bm{\beta}(\bm{I}+(s_j\theta/s_1)^{-1}\bm{\Lambda})^n\bm{e}.\\
\end{align*}
\end{Proposition}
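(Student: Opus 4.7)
The plan is to start from the operator-level formula \eqref{eq: Matrix exponential ruin}, namely $\psi_{\Pi\star G_m}(u)=\rho\bm{\alpha}\e^{(\bm{T}+\rho\bm{t\alpha})u}\bm{e}_\infty$, and perform a \emph{uniformization} at rate $c:=\theta/s_1$. Since $\theta$ is the largest diagonal element of $-\bm{\Lambda}$ and $s_1$ is the smallest atom of $\Pi$, the scalar $\theta/s_1$ dominates the diagonal of $-(\bm{T}+\rho\bm{t\alpha})$, so
$$\bm{P}:=\bm{I}+\frac{s_1}{\theta}\bigl(\bm{T}+\rho\bm{t\alpha}\bigr)$$
is substochastic. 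Inserting the elementary identity $\e^{\bm{A}u}=\sum_{n\ge 0}\frac{(cu)^n\e^{-cu}}{n!}(\bm{I}+c^{-1}\bm{A})^n$ into the matrix exponential turns it into a Poisson mixture and immediately delivers the claimed series with $\kappa_n:=\rho\bm{\alpha}\bm{P}^n\bm{e}_\infty$. The base case $\kappa_0=\rho$ follows from $\bm{\alpha}\bm{e}_\infty=(\bm{\pi}\otimes\bm{\beta})\bm{e}_\infty=1$, because $\bm{\pi}$ and $\bm{\beta}$ are probability vectors.

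The second step is to unfold the rank-one structure of $\bm{P}$. Writing $\bm{P}=\bm{Q}+\bm{v}\bm{\alpha}$, where $\bm{Q}:=\bm{I}+(s_1/\theta)\bm{T}$ is block diagonal with $j$-th block $\bm{Q}_j:=\bm{I}+(s_1/(s_j\theta))\bm{\Lambda}$, and where $\bm{v}:=(\rho s_1/\theta)\bm{t}$ has $j$-th block $(\rho s_1/(\theta s_j))\bm{\lambda}$ (using $\bm{t}=-\bm{T}\bm{e}_\infty$ and that block $j$ of $\bm{T}$ is $\bm{\Lambda}/s_j$), I would apply the telescoping identity
$$\bm{P}^n-\bm{Q}^n=\sum_{k=0}^{n-1}\bm{Q}^k(\bm{P}-\bm{Q})\bm{P}^{n-1-k}=\sum_{k=0}^{n-1}\bm{Q}^k\bm{v}\bm{\alpha}\bm{P}^{n-1-k},$$
pre-multiply by $\rho\bm{\alpha}$, and post-multiply by $\bm{e}_\infty$. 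This produces
$$\kappa_n=\rho\bm{\alpha}\bm{Q}^n\bm{e}_\infty+\sum_{k=0}^{n-1}(\rho\bm{\alpha}\bm{Q}^k\bm{v})\cdot\frac{\kappa_{n-1-k}}{\rho},$$
since $\rho\bm{\alpha}\bm{P}^{n-1-k}\bm{e}_\infty=\kappa_{n-1-k}$. The block diagonality of $\bm{Q}$ now collapses the infinite-dimensional quantities: $\bm{\alpha}\bm{Q}^n\bm{e}_\infty=\sum_j\pi_j\bm{\beta}\bm{Q}_j^n\bm{e}=\sum_j\pi_j C_{nj}$, while $\bm{\alpha}\bm{Q}^k\bm{v}=(\rho s_1/\theta)\sum_j(\pi_j/s_j)\bm{\beta}\bm{Q}_j^k\bm{\lambda}=(\rho s_1/\theta)\sum_j(\pi_j/s_j)B_{kj}$. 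Reindexing $k\mapsto i$ reproduces the stated recursion.

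The main obstacle is making these infinite-dimensional manipulations rigorous rather than formal. The uniformization expansion, the telescoping identity and the blockwise collapse must all be interpreted on an appropriate normed sequence space. The substochasticity of $\bm{P}$ (so $\|\bm{P}^n\|_\infty\le 1$), the fact that $\bm{\alpha}\in\ell^1$, and the absolute convergence of the blockwise sums $\sum_j\pi_j|\bm{\beta}\bm{Q}_j^n\bm{e}|$ and $\sum_j(\pi_j/s_j)|\bm{\beta}\bm{Q}_j^k\bm{\lambda}|$ (both controlled by $\|\bm{Q}_j\|\le 1$ and $\bm{\pi}\in\ell^1$) permit the necessary interchanges via dominated convergence. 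Once these technicalities are secured, the derivation is purely algebraic and essentially mirrors \cite{Bladt2015}, the only difference being that here $\Pi\star G$ is taken directly as the renewal distribution rather than obtained as the integrated tail of a phase-type scale mixture.
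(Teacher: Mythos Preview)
Your proposal is correct and follows essentially the same approach as the paper's proof. The paper invokes Theorem~3.1 of \cite{Bladt2015} as a black box to obtain the uniformized series and the recursion for $\kappa_n$, then performs the blockwise collapse $(\bm{\pi}\otimes\bm{\beta})(\bm{I}_\infty+(s_1/\theta)\bm{T})^i\bm{t}=\sum_j(\pi_j/s_j)B_{ij}$ and $(\bm{\pi}\otimes\bm{\beta})(\bm{I}_\infty+(s_1/\theta)\bm{T})^n\bm{e}_\infty=\sum_j\pi_jC_{nj}$; you instead spell out the content of that theorem explicitly via uniformization at rate $\theta/s_1$ and the telescoping identity for the rank-one perturbation $\bm{P}=\bm{Q}+\bm{v}\bm{\alpha}$, arriving at the same recursion and the same blockwise computation.
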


\begin{proof}
Since $\theta$ is the largest diagonal element of $-\bm{\Lambda}$ and 
$\{s_i\}$ is an increasing sequence, then $\theta/s_1$ is the largest diagonal element of 
$-\bm T$, then from Theorem 3.1 in \cite{Bladt2015}, we have 
\begin{equation*}
\psi_{\Pi\star G_m}(u)=\sum_{n=0}^{\infty}\kappa_n\dfrac{(\theta u/s_1)^n\e^{-\theta u/s_1}}{n!},
\end{equation*}
where $\kappa_0=\rho(\bm{\pi}\otimes\bm{\beta})\bm{\e_{\infty}}=\rho\sum\limits_{i=0}^{\infty}\pi_i=\rho$, and 
\begin{equation*}
\kappa_n=\rho\left[\sum_{i=0}^{n-1}\dfrac{s_1}{\theta}(\bm{\pi}\otimes\bm{\beta})\left(\bm{I_{\infty}}+\dfrac{s_1}{\theta}\bm{T}\right)^i\bm{t}\kappa_{n-1-i}+(\bm{\pi}\otimes\bm{\beta})\left(\bm{I_{\infty}}+\dfrac{s_1}{\theta}\bm{T}\right)^n\bm{e_{\infty}}\right].
\end{equation*}
It is not difficult to see that
\begin{equation*}
(\bm{\pi}\otimes\bm{\beta})\left(\bm{I_{\infty}}+\dfrac{s_1}{\theta}\bm{T}\right)^i\bm{t}=\sum_{j=1}^{\infty}\pi_j\bm{\beta}\left(\bm{I}+\dfrac{s_1}{s_j\theta}\bm{\Lambda}\right)^i\left(-\dfrac{\bm{\Lambda\bm{e}}}{s_j}\right)=\sum_{j=1}^{\infty}\dfrac{\pi_j}{s_j}B_{ij}
\end{equation*}
and 
\begin{equation*}
(\bm{\pi}\otimes\bm{\beta})\left(\bm{I_{\infty}}+\dfrac{s_1}{\theta}\bm{T}\right)^n\bm{e_{\infty}}=
\sum_{j=1}^{\infty}\pi_j\bm{\beta}\left(\bm{I}+\dfrac{s_1}{s_j\theta}\bm{\Lambda}\right)^n\bm{e}=\sum_{j=1}^{\infty}\pi_jC_{nj},
\end{equation*}
where $B_{ij}$ and $C_{nj}$ are defined as above.
\end{proof}
Proposition \ref{renewal.PH} is to be interpreted as the probability that the lifetime
of a defective renewal process exceeds level $u$.  An interpretation in terms of the
risk process is not always possible since we may not be able to identify a claim
size distribution having integrated tail $\Pi\star G_m$. 

The result above can be seen as a (slight) generalization of  Theorem 3.1 of \cite{Bladt2015}. This can be seen from Proposition \ref{Th.Excess.Dist.FG} that shows that if the claim sizes are distributed according to an Erlangized scale mixture
$\Pi\star G_m$, 
then its integrated tail of $\Pi\star G_m$ remains in the family of phase-type scale mixtures. 
{
Using the results of Proposition \ref{Th.Excess.Dist.FG} and Remark \ref{PH Spectral decomposition}, }we recover the formula of \cite{Bladt2015}.  
\begin{Proposition}
\label{renewal.Integrated}
\begin{equation}
\label{BladtAlgorithm}
\psi_{H_\Pi\star \widehat G}(u)=\sum_{n=0}^{\infty}\kappa_n\dfrac{(\theta u/s_1)^n\e^{-\theta u/s_1}}{n!},
\end{equation}
where $\theta$ is the largest diagonal element of $-\bm{\Lambda}$ and 
\begin{align*}
\kappa_n&=\begin{cases}
  \rho,&n=0,\\[.5cm]
  \dfrac{\rho }{\mu_\Pi \mu_G}\left[\dfrac{s_1}{\theta}
  \left(\sum\limits_{i=0}^{n-1}\kappa_{n-1-i}\sum\limits_{j=1}^{\infty}{\pi_j}C_{ij}\right)
  +\sum\limits_{j=1}^{\infty}{\pi_j}s_jD_{nj}\right],&n>0,
\end{cases}
\end{align*}
where
\begin{align*}
{C}_{ij}&:=\bm{\beta}(\bm{I}+(s_j\theta/s_1)^{-1}\bm{\Lambda})^i\bm{e},&
{D}_{nj}&:=\bm{\beta}(-\bm{\Lambda})^{-1}(\bm{I}+(s_j\theta/s_1)^{-1}\bm{\Lambda})^{n}\bm{e}.
\end{align*}
\end{Proposition}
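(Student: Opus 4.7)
The strategy is to recognize that $H_\Pi \star \widehat{G}$ is itself a phase-type scale mixture, and then apply Proposition \ref{renewal.PH} with the appropriate reparameterization. By Proposition \ref{Th.Excess.Dist.FG} the integrated tail $\widehat{\Pi\star G}$ has the representation $H_\Pi \star \widehat G$, where $H_\Pi$ is the discrete distribution placing mass $\tilde\pi_j:=s_j\pi_j/\mu_\Pi$ at $s_j$, and by Remark \ref{PH Spectral decomposition} $\widehat G$ is itself a PH distribution with representation $(\tilde{\bm\beta},\bm\Lambda)$, where $\tilde{\bm\beta}=-\bm\beta\bm\Lambda^{-1}/\mu_G$. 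Thus I can invoke Proposition \ref{renewal.PH} directly on $H_\Pi\star \widehat G$ with the scaling distribution $\tilde\pi_j$ and the phase-type parameters $(\tilde{\bm\beta},\bm\Lambda)$.

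Since the support points $\{s_j\}$ and the sub-intensity matrix $\bm\Lambda$ are unchanged under this substitution, the scalar $\theta$ and the base sequence $(\theta u/s_1)^n \e^{-\theta u/s_1}/n!$ in the series expansion are the same as in Proposition \ref{renewal.PH}. Only the coefficients $\kappa_n$ need to be recomputed. The constant term is $\kappa_0=\rho\sum_j \tilde\pi_j=\rho$, matching the claim.

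For $n>0$, the recursion inherited from Proposition \ref{renewal.PH} involves the quantities $\tilde B_{ij}=\tilde{\bm\beta}(\bm I+(s_j\theta/s_1)^{-1}\bm\Lambda)^i(-\bm\Lambda\bm e)$ and $\tilde C_{nj}=\tilde{\bm\beta}(\bm I+(s_j\theta/s_1)^{-1}\bm\Lambda)^n\bm e$. Using that $\bm\Lambda^{-1}$ commutes with every polynomial in $\bm\Lambda$, the first one collapses as
\begin{equation*}
\tilde B_{ij}=-\tfrac{1}{\mu_G}\bm\beta\bm\Lambda^{-1}(\bm I+(s_j\theta/s_1)^{-1}\bm\Lambda)^i(-\bm\Lambda\bm e)=\tfrac{1}{\mu_G}\bm\beta(\bm I+(s_j\theta/s_1)^{-1}\bm\Lambda)^i\bm e=\tfrac{1}{\mu_G}C_{ij},
\end{equation*}
while the second one is, directly from the definition, $\tilde C_{nj}=\tfrac{1}{\mu_G}D_{nj}$. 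Substituting these together with $\tilde\pi_j/s_j=\pi_j/\mu_\Pi$ and $\tilde\pi_j=s_j\pi_j/\mu_\Pi$ into the recursion from Proposition \ref{renewal.PH} pulls out a common factor of $1/(\mu_\Pi\mu_G)$ and produces exactly the formula claimed.

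The argument is therefore essentially bookkeeping: the only nontrivial step is the algebraic simplification of $\tilde B_{ij}$, which relies on the commutativity of $\bm\Lambda^{-1}$ with polynomials in $\bm\Lambda$. There is no genuine analytic obstacle, since Proposition \ref{renewal.PH} already carries all the analytical content; the present proposition is a direct corollary obtained by feeding the integrated-tail representation of Proposition \ref{Th.Excess.Dist.FG} into that formula.
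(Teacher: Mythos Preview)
Your proof is correct and follows exactly the route indicated by the paper: the proposition is presented there as an immediate consequence of applying Proposition~\ref{renewal.PH} to the phase-type scale mixture $H_\Pi\star\widehat G$, using the reparameterizations supplied by Proposition~\ref{Th.Excess.Dist.FG} and Remark~\ref{PH Spectral decomposition}. You have simply made explicit the algebraic substitutions (in particular the commutation of $\bm\Lambda^{-1}$ with polynomials in $\bm\Lambda$ that turns $\tilde B_{ij}$ into $C_{ij}/\mu_G$) that the paper leaves to the reader.
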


A drawback from the formulas given above is that the calculation of the quantities
$B_{ij}$, $C_{ij}$ and $D_{ij}$ is computationally expensive since these 
involve costly matrix operations. However, these expression can be simplified in our case because
because 
the subintensity matrix $\bm\Lambda$ of an
Erlang distribution can be written as a bidiagonal matrix, while the vectors denoting the 
initial distribution $\bm\beta$ and the absorption rates $\bm\lambda$ are proportional
to canonical vectors.  Hence, the resulting expressions for the terms $B_{ij}$,  $C_{ij}$
and $D_{ij}$ in Proposition \ref{renewal.PH} and Proposition \ref{renewal.Integrated} take relatively
simple forms.  These are given in the following Lemma.

\begin{Lemma}\label{lemma.matrices}
Suppose that $G_m\sim\mbox{Gamma}(\xi,\xi)$, then 
\begin{align*}
{B}_{ij}&=
    \begin{cases}\makebox[6.5cm]{$0,$}&i< \xi-1,\\[.35cm]
    \makebox[6.5cm]{${\xi}\bigg(\begin{matrix}i\\{\xi-1}\end{matrix}\bigg)\left(1-\dfrac{s_1}{s_j}\right)^{i-\xi+1}
   \left(\dfrac{s_1}{s_j}\right)^{\xi-1},$}   &i\ge \xi-1,\end{cases}\\[.25cm]
{C}_{ij}&=
    \begin{cases}\makebox[6.5cm]{$1,$}&i\le \xi-1,\\[.35cm]
    \makebox[6.5cm]{$\sum\limits_{k=0}^{\xi-1}\bigg(\begin{matrix}i\\k\end{matrix}\bigg)\left(1-\dfrac{s_1}{s_j}\right)^{i-k}
   \left(\dfrac{s_1}{s_j}\right)^k,$}   &i\ge \xi-1,\end{cases}\\[.25cm]
{D}_{ij}&=
\begin{cases}
   \makebox[6.5cm]{$1-\dfrac{i}{\xi}\dfrac{s_1}{s_j},$} &i\le \xi,\\[.35cm]
   \makebox[6.5cm]{$\sum\limits_{k=0}^{\xi-1}\dfrac{\xi-k}{\xi}\bigg(\begin{matrix}i\\k\end{matrix}\bigg)\left(1-\dfrac{s_1}{s_j}\right)^{i-k} \left(\dfrac{s_1}{s_j}\right)^k,$} &i>\xi.
 \end{cases}  
\end{align*}
\end{Lemma}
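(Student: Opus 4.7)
The plan is to exploit the special bidiagonal structure of the Erlang subintensity matrix together with the binomial theorem. First, I would spell out the ingredients: for $G_m\sim\mathrm{Erlang}(\xi,\xi)$ the representation is $\bm{\beta}=(1,0,\dots,0)$, the $\xi\times\xi$ subintensity matrix $\bm{\Lambda}$ has $-\xi$ on the diagonal and $\xi$ on the superdiagonal, hence $\theta=\xi$ and the exit vector is $\bm{\lambda}=-\bm{\Lambda}\bm{e}=(0,\dots,0,\xi)^\top$. Writing $r:=s_1/s_j$, the matrix governing all three quantities becomes
\begin{equation*}
\bm{M}:=\bm{I}+(s_j\theta/s_1)^{-1}\bm{\Lambda}=(1-r)\bm{I}+r\bm{N},
\end{equation*}
where $\bm{N}$ is the nilpotent shift matrix with ones on the superdiagonal. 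Since $\bm{I}$ and $\bm{N}$ commute and $\bm{N}^k$ vanishes for $k\ge\xi$, the binomial theorem yields
\begin{equation*}
\bm{M}^i=\sum_{k=0}^{\min(i,\xi-1)}\binom{i}{k}(1-r)^{i-k}r^{k}\bm{N}^{k},
\end{equation*}
so that the $(\ell_1,\ell_2)$-entry equals $\binom{i}{\ell_2-\ell_1}(1-r)^{i-\ell_2+\ell_1}r^{\ell_2-\ell_1}$ when $\ell_1\le\ell_2$ and zero otherwise. In particular $(\bm{\beta}\bm{M}^i)_\ell=\binom{i}{\ell-1}(1-r)^{i-\ell+1}r^{\ell-1}$.

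Next I would read off each formula. For $B_{ij}=\bm{\beta}\bm{M}^i\bm{\lambda}$, only the last entry of $\bm{\beta}\bm{M}^i$ contributes (times $\xi$), giving $B_{ij}=\xi\binom{i}{\xi-1}(1-r)^{i-\xi+1}r^{\xi-1}$, which is zero for $i<\xi-1$ since $\binom{i}{\xi-1}=0$. For $C_{ij}=\bm{\beta}\bm{M}^i\bm{e}$, summing the entries of $\bm{\beta}\bm{M}^i$ produces $\sum_{k=0}^{\xi-1}\binom{i}{k}(1-r)^{i-k}r^{k}$; when $i\le\xi-1$ the sum extends over the full range $k=0,\dots,i$ (larger $k$ contribute zero binomial coefficients) and collapses to $(1-r+r)^{i}=1$ by the binomial theorem.

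For $D_{nj}$ the only new ingredient is $\bm{\beta}(-\bm{\Lambda})^{-1}$. I would compute $(-\bm{\Lambda})^{-1}$ directly by back-substitution on the bidiagonal system, which gives $1/\xi$ in every entry on and above the diagonal; hence $\bm{\beta}(-\bm{\Lambda})^{-1}=(1/\xi)\bm{e}^{\top}$. Therefore
\begin{equation*}
D_{nj}=\tfrac{1}{\xi}\bm{e}^{\top}\bm{M}^{n}\bm{e}=\tfrac{1}{\xi}\sum_{\ell_1=1}^{\xi}\sum_{\ell_2=\ell_1}^{\xi}\binom{n}{\ell_2-\ell_1}(1-r)^{n-\ell_2+\ell_1}r^{\ell_2-\ell_1}.
\end{equation*}
Substituting $k=\ell_2-\ell_1$ and noting that for each $k\in\{0,\dots,\xi-1\}$ the index $\ell_1$ takes exactly $\xi-k$ values yields $D_{nj}=\sum_{k=0}^{\xi-1}\tfrac{\xi-k}{\xi}\binom{n}{k}(1-r)^{n-k}r^{k}$, which is the second case.

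Finally, for the case $n\le\xi$ I would argue that $\binom{n}{k}=0$ for $k>n$, so the truncation at $\xi-1$ is immaterial and the sum becomes $\sum_{k=0}^{n}\binom{n}{k}(1-r)^{n-k}r^{k}-\tfrac{1}{\xi}\sum_{k=0}^{n}k\binom{n}{k}(1-r)^{n-k}r^{k}=1-\tfrac{n r}{\xi}$, using $\sum_{k}k\binom{n}{k}x^{k}(1-x)^{n-k}=nx$. Substituting $r=s_1/s_j$ recovers the first branch. The bookkeeping of the two index ranges and the matching of the boundary cases $i=\xi-1$ and $n=\xi$ are the main (routine) obstacles; the substantive content is the commutation of $(1-r)\bm{I}$ with $r\bm{N}$ which linearizes the matrix power into a binomial sum.
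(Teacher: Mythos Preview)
Your proof is correct and follows essentially the same approach as the paper: both identify the bidiagonal structure of $\bm I+(s_j\xi/s_1)^{-1}\bm\Lambda$, compute the entries of its $i$-th power via binomial coefficients, and then read off $B_{ij}$, $C_{ij}$, $D_{ij}$ as, respectively, the $(1,\xi)$-entry times $\xi$, the first-row sum, and $\xi^{-1}$ times the sum of all entries (using that $(-\bm\Lambda)^{-1}=\xi^{-1}\bm U$ with $\bm U$ upper triangular of ones). Your write-up is in fact more explicit than the paper's---you spell out the decomposition $\bm M=(1-r)\bm I+r\bm N$ and the binomial expansion, and you verify the boundary cases $C_{ij}=1$ for $i\le\xi-1$ and $D_{nj}=1-nr/\xi$ for $n\le\xi$, which the paper leaves implicit.
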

\begin{proof}
 Let $(\bm \beta,\bm\Lambda)$ be the canonical parameters of the phase-type representation of an
$\mbox{Erlang}(\xi,\xi)$ distribution (see Section 2.1), so $\theta=\xi$. Recall that
\begin{align*}
{B}_{ij}&:=\bm{\beta}(\bm{I}+(s_j\xi/s_1)^{-1}\bm{\Lambda})^i\bm{\lambda},\\
{C}_{ij}&:=\bm{\beta}(\bm{I}+(s_j\xi/s_1)^{-1}\bm{\Lambda})^i\bm{e},\\
{D}_{ij}&:=\bm{\beta}(-\bm{\Lambda})^{-1}(\bm{I}+(s_j\xi/s_1)^{-1}\bm{\Lambda})^i\bm{e}.
\end{align*}
Observe that the matrix $(\bm{I}+(s_j\xi/s_1)^{-1}\bm{\Lambda})$ is bidiagonal with all the elements
in the diagonal being equal. In particular, the $(k,\ell)$-th entry
of the $i$-th power of such a matrix is given by 
\begin{equation*}
(\bm{I}+(s_j\xi/s_1)^{-1}\bm{\Lambda})_{k\ell}^i=
 \begin{cases}
 \bigg(\begin{matrix}i\\\ell-k\end{matrix}\bigg)\left(1-\dfrac{s_1}{s_j}\right)^{i-\ell+k}\left(\dfrac{s_1}{s_j}\right)^{\ell-k}&1\le k\le\ell\le i+1 \\[.5cm]
 0&\text{otherwise}.
 \end{cases}
\end{equation*}
Therefore, $B_{ij}$ corresponds to the $(1,\xi)$-entry of the matrix $(\bm{I}+(s_j\xi/s_1)^{-1}\bm{\Lambda})$
multiplied by $\xi$. $C_{ij}$ corresponds to the sum of the elements of the first row of
$(\bm{I}+(s_j\xi/s_1)^{-1}\bm{\Lambda})$. 
For the last case, observe that
$\bm\Lambda^{-1}=-\lambda^{-1}\bm U$ where $\bm U$ is an upper triangular matrix of ones. 
Therefore, $D_{ij}$ corresponds to the sum of the elements of $(\bm{I}+(s_j\xi/s_1)^{-1}\bm{\Lambda})$
and divided by $\xi$.  $D_{ij}$ is written as the sum of all the elements in the upper diagonals divided
by $\xi$.
\end{proof}

\subsection{Ruin probability for Erlangized scale mixtures}
\label{sub.RPviaErlang}

In this subsection we specialize in approximating the ruin probability
$\psi_{\widehat F}(u)$ using  Erlangized scale mixtures.  We assume that the target
Cram\'er--Lundberg risk process has Poisson intensity $\gamma$ and claim 
size distribution $F$, so the average claim amount
per unit of time is $\rho=\gamma\mu_F$.

First, we approximate the integrated tail $\widehat F$ with an Erlangized
scale mixture $\Pi\star G_m$ where $\Pi$ is an approximating discrete
distribution of $\widehat{F}$, that is, the approach of approximation A.  The approximation for $\psi_{\widehat F}(u)$
is given next:
\begin{theorem}[Approximation A]\label{Theorem.Ruin1}
Let $\Pi$ be a  nonnegative discrete distribution supported over $\{s_i:i\in\nat\}$, 
$G_m\sim\mbox{Erlang}(\xi,\xi)$ and $\rho=\gamma\mu_F<1$. The lifetime
of a terminating renewal process having defective renewal distribution $\rho\Pi\star G_m$ 
is given by
\begin{equation*}
\psi_{\Pi\star G_m}(u)=\sum_{n=0}^{\infty}\kappa_n\dfrac{(\xi u/s_1)^n\e^{-\xi u/s_1}}{n!},
\end{equation*}
where
\begin{equation*}
\kappa_n=\begin{cases}
 \gamma\mu_F,   &     0 \leq n \le \xi-1,\\[.5cm]
     \gamma\mu_F\left[ \sum\limits_{i=\xi-1}^{n-1}\kappa_{n-1-i}
            {\mathcal{B}_{i}}	+  \mathcal{C}_{n}\right],&\xi\le n,
\end{cases}
\end{equation*}
and 
\begin{align*}
 \mathcal B_{i}&=\sum\limits_{j=1}^{\infty}\dfrac{\pi_j s_1}{s_j}
  \mbox{\emph{bin}}(\xi-1;i,s_1/s_j),
  &\mathcal C_{n}&=\sum\limits_{j=1}^{\infty}\ \pi_j
   \mbox{\emph{Bin}}(\xi-1;n,s_1/s_j),
\end{align*}
where $\mbox{\emph{bin}}(\cdot;n,p)$ and $\mbox{{\emph{Bin}}}(\cdot;n,p)$ denote the pdf and cdf 
respectively of a binomial distribution with parameters $n$ and $p$ .
\end{theorem}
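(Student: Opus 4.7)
The plan is to specialize the general renewal formula in Proposition \ref{renewal.PH} to the Erlang case and substitute the closed forms provided by Lemma \ref{lemma.matrices}. First, I would fix $(\bm{\beta}, \bm{\Lambda})$ to be the canonical $\mbox{Erlang}(\xi,\xi)$ phase-type representation, for which $\bm{\Lambda}$ is bidiagonal with all diagonal entries equal to $-\xi$, so that $\theta = \xi$. Substituting $\theta = \xi$ into \eqref{BladtAlgorithmTail} immediately yields the claimed Poisson-kernel prefactor $(\xi u/s_1)^n \e^{-\xi u/s_1}/n!$ in the series representation of $\psi_{\Pi\star G_m}(u)$.

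Next, I would translate the scalars $B_{ij}$ and $C_{nj}$ inside the recursion for $\kappa_n$ into binomial quantities. By Lemma \ref{lemma.matrices}, the nonzero branch of $B_{ij}$ is $\xi\binom{i}{\xi-1}(s_1/s_j)^{\xi-1}(1-s_1/s_j)^{i-\xi+1}$, which is exactly $\xi\cdot\mbox{bin}(\xi-1;\,i,\,s_1/s_j)$; similarly, the two branches of $C_{ij}$ in Lemma \ref{lemma.matrices} both coincide with $\mbox{Bin}(\xi-1;\,i,\,s_1/s_j)$, using that a binomial with $i\le \xi-1$ trials cannot take a value larger than $\xi-1$, so its cdf at $\xi-1$ equals $1$. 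Plugging these identifications into Proposition \ref{renewal.PH}, the $s_1/\theta = s_1/\xi$ prefactor cancels the explicit $\xi$ in $B_{ij}$, leaving
\begin{equation*}
\frac{s_1}{\xi}\sum_{j=1}^{\infty}\frac{\pi_j}{s_j}B_{ij} = \sum_{j=1}^{\infty}\frac{\pi_j s_1}{s_j}\,\mbox{bin}(\xi-1;\,i,\,s_1/s_j) = \mathcal{B}_i,
\end{equation*}
and analogously $\sum_{j=1}^{\infty}\pi_j C_{nj} = \mathcal{C}_n$.

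Finally, I would split the recursion into the two announced cases. Since $B_{ij} = 0$ for $i < \xi-1$, one has $\mathcal{B}_i = 0$ there as well, so the inner sum in the recursion truncates to $\sum_{i=\xi-1}^{n-1}\kappa_{n-1-i}\mathcal{B}_i$, which is empty precisely when $n \le \xi-1$. Moreover, by the observation above, $\mathcal{C}_n = 1$ whenever $n \le \xi-1$. Combined with $\rho = \gamma\mu_F$, the recursion then gives $\kappa_n = \gamma\mu_F$ for every $0\le n\le \xi-1$, while for $n\ge \xi$ it yields the stated expression $\gamma\mu_F\bigl[\sum_{i=\xi-1}^{n-1}\kappa_{n-1-i}\mathcal{B}_i+\mathcal{C}_n\bigr]$.

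I do not expect a serious obstacle; the argument is essentially a bookkeeping exercise in which Lemma \ref{lemma.matrices} replaces the matrix evaluations in Proposition \ref{renewal.PH} by scalar binomial quantities. The only delicate point is verifying that the boundary transitions align exactly at $n=\xi-1$: the support of $\mathcal{B}_i$ begins at $i=\xi-1$ and the binomial cdf $\mathcal{C}_n$ saturates at $1$ for $n\le \xi-1$, and these two facts must conspire to produce the clean two-piece definition of $\kappa_n$ in the statement.
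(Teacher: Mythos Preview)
Your proposal is correct and follows exactly the route taken in the paper: the paper's proof is a one-line appeal to Proposition \ref{renewal.PH} and Lemma \ref{lemma.matrices} with $\rho=\gamma\mu_F$ and $\theta=\lambda=\xi$, and you have simply spelled out the substitutions and the boundary bookkeeping that this appeal entails. There is no difference in strategy, only in level of detail.
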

\begin{proof}
The result follows by letting $\rho=\gamma\mu_F$, $\theta=\xi$, $\lambda=\xi$, 
applying Proposition \ref{renewal.PH}  and Lemma \ref{lemma.matrices} given 
in the previous subsection.
\end{proof}
We propose to use $\psi_{\Pi\star G_m}$ as an approximation of ruin probability $\psi_{\widehat F}$.  
One of the most attractive features of the result above is that because of the simple structure
of Erlangized scale mixture it is possible to rewrite the approximation of the ruin probability in
simple terms which are free of matrix operations.  In particular, the  simplified expressions
for the values of $\kappa_n$ given in terms of the binomial distribution are particularly convenient
for computational purposes.  

As stressed before, for approximation A we sacrifice the interpretation of the 
approximation $\psi_{\Pi\star G_m}$  as the ruin probability of some Cram\'er--Lundberg reserve process since
it is not possible to easily identify a distribution whose integrated tail corresponds to the Erlangized scale
mixture distribution $\Pi\star G_m$.  We also lose the interpretation of the value $\rho$ as the average claim amount per unit of time
(in the original risk process, the value of $\rho$ is selected as the product of the expected value
of an individual claim multiplied by the intensity of the Poisson process), but 
for practical computations this is easily fixed 
by simply letting $\rho=\gamma\mu_F$ where $\mu_F$ is the mean
value of the original claim sizes.  

As mentioned before, a more common and somewhat natural approach is 
to approximate the claim size distributions via Erlangized
scale mixtures, i.e. approximation B.  The following theorem provides an expression for  
approximation B of the probability of ruin $\psi_{\widehat F}$ with 
the ruin probability of a reserve process having claim sizes $\Pi\star G_m$.
This result could be useful for instance in a situation where the integrated
tail is not available and it is difficult to compute. 

Note that we have modified the intensity of the Poisson process in order to match
the average claim amount per unit of time $\rho=\gamma\mu_F$ of the original process.  This 
selection will help to demonstrate uniform convergence.  

\begin{theorem}[Approximation B]\label{Theorem.Ruin2}
Let $\Pi$ be a  nonnegative discrete distribution supported over $\{s_i:i\in\nat\}$ and 
$G_m\sim\mbox{Erlang}(\xi,\xi)$. The probability of ruin in the Cram\'er--Lundberg model
having intensity $\gamma\mu_F/\mu_\Pi$ and claim size distribution $\Pi\star G$ is given by
\begin{equation*}
\psi_{H_\Pi\star \widehat G_m}(u)=\sum_{n=0}^{\infty}\kappa_n\dfrac{(\xi u/s_1)^n\e^{-\xi u/s_1}}{n!},
\end{equation*}
where
\begin{equation*}
\kappa_n=\begin{cases}
 \gamma\mu_F,   &   n=0,\\[.5cm]
    (\gamma\mu_F-1)\left(1+\dfrac{\gamma \mu_F s_1 }{{  \mu_{\Pi}} \xi}\right)^n+1,  &   1 \leq n \leq \xi,\\[.5cm]
     \dfrac{ \gamma \mu_F s_1}{{ \mu_{\Pi}} \xi }\sum\limits_{i=0}^{n-1}\kappa_{n-1-i}
            \mathcal{C}_{i}	+  \dfrac{ \gamma \mu_F }{\mu_{\Pi}}
            \mathcal{D}_{n},&\xi< n.
\end{cases}
\end{equation*}
and
\begin{align*}
 \mathcal C_{i}&=\sum\limits_{j=1}^{\infty}\pi_j\mbox{\emph{Bin}}(\xi-1;i,s_1/s_j),
 &\mathcal D_{n}&=\sum\limits_{j=1}^{\infty}\ \pi_j s_j\sum\limits_{k=0}^{\xi-1}\dfrac{\xi-k}\xi\mbox{\emph{bin}}(k;n,s_1/s_j).
\end{align*}

\end{theorem}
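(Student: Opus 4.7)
The plan is to derive this as a direct specialization of Proposition \ref{renewal.Integrated} to the Erlang case, using Lemma \ref{lemma.matrices} to make the matrix quantities $C_{ij},D_{ij}$ fully explicit. First I would identify the appropriate value of $\rho$: under Proposition \ref{Th.Excess.Dist.FG} the integrated tail of $\Pi\star G_m$ is $H_\Pi\star\widehat{G}_m$, and the claim size has mean $\mu_\Pi\mu_{G_m}=\mu_\Pi$ (since $G_m\sim\mbox{Erlang}(\xi,\xi)$ has mean one), so the average claim amount per unit time is $(\gamma\mu_F/\mu_\Pi)\cdot\mu_\Pi=\gamma\mu_F$. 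Setting $\theta=\xi$, $\mu_G=1$, $\rho=\gamma\mu_F$ in Proposition \ref{renewal.Integrated} and substituting the closed forms from Lemma \ref{lemma.matrices} immediately identifies $\mathcal{C}_i$ and $\mathcal{D}_n$ with the expressions in the statement.

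Next I would split the recursion into the two regimes $n\le\xi$ and $n>\xi$. For $n>\xi$ there is nothing further to do: the general formula of Proposition \ref{renewal.Integrated} combined with Lemma \ref{lemma.matrices} is precisely the third case of the theorem. The substantive step is the closed-form expression for $1\le n\le\xi$. In this regime Lemma \ref{lemma.matrices} gives $C_{ij}=1$ for all $i\le\xi-1$ (so $\mathcal{C}_i=1$) and $D_{nj}=1-(n/\xi)(s_1/s_j)$, from which
\begin{equation*}
\mathcal{D}_n=\sum_{j=1}^\infty\pi_j s_j\Bigl(1-\tfrac{n}{\xi}\tfrac{s_1}{s_j}\Bigr)=\mu_\Pi-\tfrac{n s_1}{\xi}.
\end{equation*}
Plugging these into the recursion and setting $a:=\gamma\mu_F s_1/(\mu_\Pi\xi)$ reduces it to
\begin{equation*}
\kappa_n=a\sum_{i=0}^{n-1}\kappa_i+\gamma\mu_F-a n,\qquad \kappa_0=\gamma\mu_F,
\end{equation*}
valid as long as $n\le\xi$.

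The main obstacle is solving this linear recursion in closed form. I would proceed by induction on $n$, with the ansatz $\kappa_n=(\gamma\mu_F-1)(1+a)^n+1$. The base case $n=0$ is immediate. For the inductive step, compute $\sum_{i=0}^{n}\kappa_i$ using the geometric series $\sum_{i=0}^n(1+a)^i=((1+a)^{n+1}-1)/a$; the $a$ in front cancels this denominator, the linear term $a(n+1)$ cancels against $-a(n+1)$, and the remaining constants collapse to $(\gamma\mu_F-1)(1+a)^{n+1}+1$, completing the induction. This yields exactly the middle case of the theorem, and stitching the three regimes together finishes the proof.
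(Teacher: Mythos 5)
Your proposal is correct and follows essentially the same route as the paper's proof: specialize Proposition \ref{renewal.Integrated} via Lemma \ref{lemma.matrices}, reduce the $1\le n\le\xi$ case to the linear recursion $\kappa_n=a\sum_{i=0}^{n-1}\kappa_i+\gamma\mu_F-an$ with $a=\gamma\mu_F s_1/(\mu_\Pi\xi)$, and solve it by induction to obtain $(\gamma\mu_F-1)(1+a)^n+1$, with the $n=0$ and $n>\xi$ cases read off directly. The only difference is that you spell out the inductive step and the identification $\rho=\gamma\mu_F$ explicitly, which the paper leaves implicit.
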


\begin{proof}
 Let $\theta=\xi$ and $\lambda=\xi$. If $1\le n \le \xi$, then  from Proposition \ref{renewal.Integrated} and Lemma \ref{lemma.matrices}.  
 we have that
\begin{align*}
\kappa_n&= \dfrac{ \gamma \mu_F s_1}{{ \mu_{\Pi}} \xi }\sum_{i=0}^{n-1}\kappa_{n-1-i}
+ \dfrac{ \gamma \mu_F }{{ \mu_{\Pi}}}\sum\limits_{j=1}^{\infty}\pi_js_j\left(1-\frac{n}{\xi}\frac{s_1}{s_j}\right),\\
&= \dfrac{ \gamma \mu_F s_1}{{ \mu_{\Pi}} \xi }\sum_{i=0}^{n-1}\kappa_{n-1-i}
+\gamma\mu_{F}- \dfrac{ \gamma \mu_F }{{ \mu_{\Pi}} \xi }\sum_{j=1}^{\infty}s_j\pi_jn\dfrac{s_1}{s_j}\\
&= \dfrac{ \gamma \mu_F s_1}{{ \mu_{\Pi}} \xi }\left(\sum_{i=0}^{n-1}\kappa_i-n\right)+\gamma\mu_{F}.
\end{align*}
Then by induction, we can get for $1\leq n \leq \xi$,
\begin{equation*}
\kappa_n=(\gamma\mu_F-1)\left(1+ \dfrac{ \gamma \mu_F s_1}{{ \mu_{\Pi}} \xi }\right)^n+1.
\end{equation*}
The cases $n=0$ and $\xi<n$ follow directly from applying Proposition \ref{renewal.Integrated} and Lemma \ref{lemma.matrices}.
\end{proof}

\section{Error bounds for the ruin probability}
\label{mysec4}

In this section we will assess the accuracy of the two proposed
approximations for the ruin probability. We will do so by providing bounds 
for the error of approximation.
We identify two sources of error. The first source is due
to the Mellin--Stieltjes convolution with the Erlang distribution; we will call this
the \emph{Erlangization error}. The second source of error is due to the 
approximation of the integrated tail $\widehat F$ (via $\Pi$ in the first case, and
via $H_\Pi$ in the second case); we will refer to this as the
\emph{discretization error}.  
For the case of approximation A  in Theorem \ref{Theorem.Ruin1}  we can use the triangle inequality to bound the overall error with the aggregation of the two types of errors, that is
\begin{equation*}
 \left|\psi_{\widehat F}(u)-\psi_{\Pi\star G_m}(u)\right|\le 
  \left|\psi_{\widehat F}(u)-\psi_{\widehat F\star G_m}(u)\right|+ 
 \left|\psi_{\widehat F\star G_m}(u)-\psi_{\Pi\star G_m}(u)\right|.
\end{equation*} 
For  approximation B in Theorem \ref{Theorem.Ruin2} we have an analogous bound
\begin{equation*}
 \left|\psi_{\widehat F}(u)-\psi_{H_\Pi\star \widehat G_m}(u)\right|\le 
  \left|\psi_{\widehat F}(u)-\psi_{H_F\star \widehat G_m}(u)\right|+ 
 \left|\psi_{H_F\star \widehat G_m}(u)-\psi_{H_\Pi\star \widehat G_m}(u)\right|.
\end{equation*} 

We will rely on the Pollaczek--Khinchine formula \eqref{PK formula} for the construction of the bounds.
Recall that the formula above is interpreted as the probability that a terminating renewal process
having defective renewal probability $\rho\widehat{F}(\cdot)$ will reach level $u$ before
terminating. 
In our two approximations of $\psi_{\widehat F}$, we have selected the value of $\rho=\gamma\mu_F$
so we can write the errors of approximation in terms of the differences between
the convolutions of the integrated tail exclusively.  For instance, the error of Erlangization
in  approximation A is given by
\begin{equation}
\label{error bond erlangization}
 \left|\psi_{\widehat F}(u)-\psi_{\widehat F\star G_m}(u)\right|
  =\left|\sum_{n=1}^{\infty}(1-\rho)\rho^n\left(\overline{\widehat{F}^{\ast n}}(u)
   -\overline{\widehat F\star G_m^{\ast n}}(u)\right)\right|.
\end{equation}
Note that $n=0$ in the above series is equal to zero.

For our  approximation B, it is noted that setting the parameter $\rho=\gamma\mu_F$ is equivalent
to calculating the ruin probability for a risk process having integrated claim sizes distributed according to 
$H_\Pi\star \widehat G_m$ while the intensity of the Poisson process is changed to $\gamma\mu_F/\mu_\Pi$.
With such an adjustment, it is possible to write both the Erlangization and discretization errors in
terms of differences of higher order convolutions as given above.

We will divide this section in three parts. In subsection \ref{sec.Vatamidou} we refine an existing
bound introduced in \cite{VatamidouAdanVlasiouZwart2014} for the error of approximation of the ruin probability.
This refined result will be used in the construction of bounds for the
error of discretization.
In subsections \ref{sec.error.app1} and \ref{sec.error.app2} we
provide bounds for the errors for each of the two approximations proposed.

\subsection{General bounds for the error of approximation}
\label{sec.Vatamidou}

The following Theorem provides a refined bound for the error of approximation for the ruin probability provided
by \cite{VatamidouAdanVlasiouZwart2014}.

\begin{theorem}\label{Th.Vatamidou} For any distributions with positive support $\widehat{F_1}$ and $\widehat{F_2}$ and fixed $u>0$, we have that
\begin{align*}
|\psi_{\widehat{F_1}}(u)-\psi_{\widehat{F_2}}(u)|& \le  \sup_{s<u}\{|\widehat{F_1}(s) - \widehat{F_2}(s)|\} 
\frac{(1-\rho) \rho}{(1-\rho \widehat{F_1}(u))(1-\rho \widehat{F_2}(u))}. \\
\end{align*}
\end{theorem}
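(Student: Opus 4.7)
The plan is to leverage the Pollaczek--Khinchine representation \eqref{PK formula} to express the difference as
\begin{equation*}
\psi_{\widehat{F_1}}(u) - \psi_{\widehat{F_2}}(u) = (1-\rho)\sum_{n=1}^{\infty} \rho^n \bigl[\widehat{F_2}^{\ast n}(u) - \widehat{F_1}^{\ast n}(u)\bigr],
\end{equation*}
reducing the problem to a pointwise bound on convolution differences at $u$. Set $\varepsilon := \sup_{s < u}|\widehat{F_1}(s) - \widehat{F_2}(s)|$; this is the quantity that should factor out of each term in the series.

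The first technical step I would take is the standard telescoping identity
\begin{equation*}
\widehat{F_2}^{\ast n} - \widehat{F_1}^{\ast n} = \sum_{k=0}^{n-1} \widehat{F_2}^{\ast k} \ast (\widehat{F_2} - \widehat{F_1}) \ast \widehat{F_1}^{\ast (n-1-k)},
\end{equation*}
combined with the observation that, when evaluated at $u$ as a Stieltjes integral against the nonnegative measure $\widehat{F_2}^{\ast k} \ast \widehat{F_1}^{\ast (n-1-k)}$, the factor $|\widehat{F_2}-\widehat{F_1}|$ is bounded uniformly by $\varepsilon$ on the relevant integration range. This gives, termwise,
\begin{equation*}
\bigl|\widehat{F_2}^{\ast n}(u) - \widehat{F_1}^{\ast n}(u)\bigr| \le \varepsilon \sum_{k=0}^{n-1} \bigl(\widehat{F_2}^{\ast k} \ast \widehat{F_1}^{\ast (n-1-k)}\bigr)(u).
\end{equation*}

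The key refinement over the original Vatamidou et al.\ bound is the positive-support inequality
\begin{equation*}
\bigl(\widehat{F_2}^{\ast k} \ast \widehat{F_1}^{\ast (n-1-k)}\bigr)(u) \le \widehat{F_2}(u)^k \, \widehat{F_1}(u)^{n-1-k},
\end{equation*}
which follows probabilistically: if $X_i \sim \widehat{F_2}$ and $Y_j \sim \widehat{F_1}$ are independent and nonnegative, then $\{\sum_i X_i + \sum_j Y_j \le u\} \subseteq \bigcap_i\{X_i\le u\} \cap \bigcap_j\{Y_j\le u\}$. Substituting this and reindexing via $\ell=k$, $m=n-1-k$ turns the outer sum into a product of two convergent geometric series,
\begin{equation*}
\sum_{n=1}^\infty \rho^n \sum_{k=0}^{n-1} \widehat{F_2}(u)^k \widehat{F_1}(u)^{n-1-k} = \rho \sum_{\ell \ge 0} \bigl(\rho \widehat{F_2}(u)\bigr)^\ell \sum_{m \ge 0} \bigl(\rho \widehat{F_1}(u)\bigr)^m = \frac{\rho}{(1-\rho\widehat{F_1}(u))(1-\rho\widehat{F_2}(u))},
\end{equation*}
and multiplying by $(1-\rho)\varepsilon$ yields the claim.

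The main obstacle will be the clean handling of the signed-measure convolution $(\widehat{F_2}-\widehat{F_1})\ast H$: one needs either Fubini on the signed integrand or a Jordan-decomposition argument to justify pulling the absolute value inside, together with a minor technicality around whether $\sup_{s<u}$ versus $\sup_{s\le u}$ matters at the boundary, which is immaterial in the Stieltjes sense. Once these are dealt with, the positive-support inequality and the geometric summation are routine, and the sharpness of the denominator $(1-\rho\widehat{F_1}(u))(1-\rho\widehat{F_2}(u))$ is a direct dividend of keeping both $\widehat{F_1}(u)$ and $\widehat{F_2}(u)$ separate in the telescoping.
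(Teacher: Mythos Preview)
Your proposal is correct and follows essentially the same approach as the paper. The only presentational difference is that the paper establishes the convolution bound $\bigl|\widehat{F_1}^{\ast n}(u)-\widehat{F_2}^{\ast n}(u)\bigr|\le\varepsilon\sum_{i=0}^{n-1}\widehat{F_1}^{\,i}(u)\widehat{F_2}^{\,n-1-i}(u)$ by induction on $n$ (peeling off one factor at a time), whereas you write the full telescoping sum at once; the subsequent use of the positive-support inequality $(\widehat{F_2}^{\ast k}\ast\widehat{F_1}^{\ast(n-1-k)})(u)\le\widehat{F_2}(u)^k\widehat{F_1}(u)^{n-1-k}$ and the geometric summation are identical in both arguments.
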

\begin{proof}
We claim that for any $n\ge 1$,
\begin{equation}\label{eq:vatamidoushaper}
\sup_{s<u}\{|{\widehat{F_1}}^{*n}(s) - \widehat{F_2}^{*n}(s)|\le \sup_{s<u}\{|\widehat{F_1}(s) - \widehat{F_2}(s)|\} \sum_{i=0}^{n-1}\widehat{F_1}^i(u)
\widehat{F_2}^{n-1-i}(u).
\end{equation}
Let us prove it by induction. It is clearly valid for $n=1$. Let us assume that it is valid for some $n\ge 1$. Then
\begin{align*}
&\sup_{s<u}\{|\widehat{F_1}^{*n+1}(s) - \widehat{F_2}^{*n+1}(s)|\}\\
 & \quad= \sup_{s<u}\{|\widehat{F_1}^{*n+1}(s) - \widehat{F_1}^{*n}*\widehat{F_2}(s) +  \widehat{F_1}^{*n}*\widehat{F_2}(s) - \widehat{F_2}^{*n+1}(s)|\}\\
& \quad\le\sup_{s<u}\{|\widehat{F_1}^{*n+1}(s) - \widehat{F_1}^{*n}*\widehat{F_2}(s)|\} +  \sup_{s<u}\{| \widehat{F_1}^{*n}*\widehat{F_2}(s) - \widehat{F_2}^{*n+1}(s)|\}.
\end{align*}
Clearly,
\begin{align}
\sup_{s<u}\{|\widehat{F_1}^{*n+1}(s) - \widehat{F_1}^{*n}*\widehat{F_2}(s)|\} & \le \sup_{s<u}\left\{\int_0^s|\widehat{F_1}(r) - \widehat{F_2}(r)|\dd \widehat{F_1}^{*n}(r)\right\}\nonumber\\
& \le \sup_{s<u}\left\{\int_0^s\sup_{l<u}\{|\widehat{F_1}(l) - \widehat{F_2}(l)|\}\dd \widehat{F_1}^{*n}(r)\right\}\nonumber\\
&= \sup_{l<u}\{|\widehat{F_1}(l) - \widehat{F_2}(l)|\} \sup_{s<u}\left\{\int_0^s\dd \widehat{F_1}^{*n}(r)\right\}\nonumber\\
&= \sup_{l<u}\{|\widehat{F_1}(l) - \widehat{F_2}(l)|\} \widehat{F_1}^{*n}(u)\nonumber\\
&\le \sup_{l<u}\{|\widehat{F_1}(l) - \widehat{F_2}(l)|\} \widehat{F_1}^n(u).\label{eq:sharpvatpart1}
\end{align}
In the last step we have used that $\widehat F^{\ast n}(u)$ corresponds to the probability
of an event where the sum of $n$ i.i.d. random variables is smaller equal than $u$ while
$\widehat{F}^n(u)$ corresponds to the probability of the maximum of i.i.d. random variables is 
smaller equal than $u$; if the random variables are nonnegative then the probability of the 
sum is clearly smaller  than the probability of the maximum.
Using the hypothesis induction we have that
\begin{align}
\sup_{s<u}\{| \widehat{F_1}^{*n}*\widehat{F_2}(s) - \widehat{F_2}^{*n+1}(s)|\} & \le \sup_{s<u}\left\{\int_0^s|\widehat{F_1}^{*n}(r) - \widehat{F_2}^{*n}(r)|\dd \widehat{F_2}(r)\right\}\nonumber\\
& \le \sup_{s<u}\left\{\int_0^s\sup_{l<u}\{|\widehat{F_1}^{*n}(l) - \widehat{F_2}^{*n}(l)|\}\dd \widehat{F_2}(r)\right\}\nonumber\\
& = \sup_{l<u}\{|\widehat{F_1}^{*n}(l) - \widehat{F_2}^{*n}(l)|\} \sup_{s<u}\left\{\int_0^s\dd \widehat{F_2}(r)\right\}\nonumber\\
& \le \left(\sup_{s<u}\{|\widehat{F_1}(s) - \widehat{F_2}(s)|\} \sum_{i=0}^{n-1}\widehat{F_1}^i(u)\widehat{F_2}^{n-1-i}(u)\right)\widehat{F_2}(u)\nonumber\\
& = \sup_{s<u}\{|\widehat{F_1}(s) - \widehat{F_2}(s)|\} \sum_{i=0}^{n-1}\widehat{F_1}^i(u)\widehat{F_2}^{n-i}(u).\label{eq:sharpvatpart2}
\end{align}
Summing (\ref{eq:sharpvatpart1}) and (\ref{eq:sharpvatpart2}), we get that
\begin{align*}
\sup_{s<u}\{|\widehat{F_1}^{*n+1}(s) - \widehat{F_2}^{*n+1}(s)|\}\le \sup_{s<u}\{|\widehat{F_1}(s) - \widehat{F_2}(s)|\} \sum_{i=0}^{n}\widehat{F_1}^i(u)\widehat{F_2}^{n-i}(u),
\end{align*}
so that formula (\ref{eq:vatamidoushaper}) is valid for all $n\ge 1$.
Finally,
\begin{align*}
|\psi_{\widehat{F_1}}(u)-\psi_{\widehat{F_2}}(u)|& \le \sum_{n=1}^\infty (1-\rho)\rho^n |{\widehat{F_1}}^{*n}(u) - {\widehat{F_2}}^{*n}(u)|\\
& \le \sup_{s<u}\{|{\widehat{F_1}}(s) - {\widehat{F_2}}(s)|\}(1-\rho)  \sum_{n={1}}^\infty \rho^n \sum_{i=0}^{n-1}{\widehat{F_1}}^i(u){\widehat{F_2}}^{n-1-i}(u)\\
& = \sup_{s<u}\{|{\widehat{F_1}}(s) - {\widehat{F_2}}(s)|\} (1-\rho)  \sum_{i=0}^{\infty}\sum_{n=i+1}^\infty \rho^n {\widehat{F_1}}^i(u){\widehat{F_2}}^{n-1-i}(u)\\
& = \sup_{s<u}\{|{\widehat{F_1}}(s) - {\widehat{F_2}}(s)|\}(1-\rho)  \sum_{i=0}^{\infty}\sum_{n=0}^\infty \rho^{n+i+1} {\widehat{F_1}}^i(u){\widehat{F_2}}^{n}(u)\\
& = \sup_{s<u}\{|{\widehat{F_1}}(s) - {\widehat{F_2}}(s)|\} (1-\rho) \rho \sum_{i=0}^{\infty}\rho^i {\widehat{F_1}}^i(u) \sum_{n=0}^\infty \rho^{n} {\widehat{F_2}}^{n}(u)\\
& = \sup_{s<u}\{|{\widehat{F_1}}(s) - {\widehat{F_2}}(s)|\} (1-\rho) \rho \frac{1}{1-\rho {\widehat{F_1}}(u)} \frac{1}{1-\rho {\widehat{F_2}}(u)}\\
& = \sup_{s<u}\{|{\widehat{F_1}}(s) - {\widehat{F_2}}(s)|\}  \frac{(1-\rho) \rho}{(1-\rho {\widehat{F_1}}(u))(1-\rho {\widehat{F_2}}(u))}.
\end{align*}
\end{proof}
We remark that the bound given above is a refinement of the result 
obtained in \cite{VatamidouAdanVlasiouZwart2014}:  
The construction of our bound is based on the inequality \eqref{eq:sharpvatpart2} and given by
\begin{align*}
\sup_{s<u}\{| \widehat{F_1}^{*n}*\widehat{F_2}(s) - \widehat{F_2}^{*n+1}(s)|\} 
 \le \sup_{s<u}\{|\widehat{F_1}(s) - \widehat{F_2}(s)|\} \sum_{i=0}^{n-1}\widehat{F_1}^i(u)\widehat{F_2}^{n-i}(u).
\end{align*}
The expression on the right hand side takes values in $(0,1)$ for all values of $n$.
In contrast, the quantity used in \cite{VatamidouAdanVlasiouZwart2014} to bound the expression
in the left hand side is $n\widehat{F}(u)$, which goes to infinity as $n\to\infty$.
We remark however, that the final bound for the error term 
proposed there remains bounded.
A comparison of the two bounds reveals that the one suggested above 
improves \cite{VatamidouAdanVlasiouZwart2014}'s bound by a factor of
\begin{equation*}
  \frac{(1-\rho)^2}{(1-\rho \widehat{F_1}(u))(1-\rho \widehat{F_2}(u))}\le 1.
\end{equation*}

\subsection{Error bounds for $\psi_{\Pi\star G_m}$}
\label{sec.error.app1}

This subsection is dedicated to the construction of the bounds for 
approximation A suggested in Theorem \ref{Theorem.Ruin1}.    
\subsubsection{Bounds for the Erlangization error of $\psi_{\Pi\star G}$}
A bound for the Erlangization error is constructed throughout the following results.


\begin{Lemma}\label{Lemma.From.Hell}
Let $\{\mathcal{A}_k:k\in\nat\}$ be an decreasing collection of closed intervals in $\RL^+$, so 
$\mathcal{A}_k=[a_k,b_k]$ and $\mathcal{A}_{k+1}\subset\mathcal{A}_{k}$. If $\mathcal{A}_0=[0,\infty]$ and
$\mathcal{A}_k\searrow\{1\}$ then
\begin{align*}
 \sup_{\ell\le u} \left|\widehat F(\ell)-\widehat F\star G_m(\ell)\right|
\le \sum_{k=0}^\infty\sup_{\ell<u}(\widehat F_{b_k}(\ell)-\widehat F_{a_k}(\ell))(G_m(\mathcal{A}_k)-G_m(\mathcal{A}_{k+1})),
\end{align*}
where $G_m(\mathcal{A}_k):=G_m(b_k)-G_m(a_k)$.  
\end{Lemma}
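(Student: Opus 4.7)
The plan is to represent the error as a single integral against $dG_m$ and then decompose the integration domain along the shells $\mathcal{A}_k\setminus\mathcal{A}_{k+1}$. Since $G_m$ is a probability measure on $[0,\infty)$, the Mellin--Stieltjes formula \eqref{MS convolution} together with the identity $\widehat F(\ell)=\int_0^\infty \widehat F(\ell)\,dG_m(s)$ gives
\begin{equation*}
\widehat F(\ell) - \widehat F\star G_m(\ell) = \int_0^\infty \bigl[\widehat F(\ell) - \widehat F(\ell/s)\bigr]\, dG_m(s).
\end{equation*}

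Next I would exploit the nested structure of the $\{\mathcal{A}_k\}$: because $\mathcal{A}_0=[0,\infty]$ and $\mathcal{A}_k\searrow\{1\}$, one has the disjoint decomposition $[0,\infty]\setminus\{1\}=\bigsqcup_{k=0}^\infty(\mathcal{A}_k\setminus\mathcal{A}_{k+1})$, and absolute continuity of $G_m$ (it is Erlang) kills any mass at the single point $\{1\}$. Hence the integral above breaks into a sum of integrals over the shells. The key pointwise bound on each shell comes from monotonicity of $\widehat F$: since $1\in\bigcap_k\mathcal{A}_k\subset\mathcal{A}_k=[a_k,b_k]$, we have $a_k\le 1\le b_k$, so for every $s\in\mathcal{A}_k$ both of the points $\ell$ and $\ell/s$ lie in the closed interval $[\ell/b_k,\ell/a_k]$, whence
\begin{equation*}
\bigl|\widehat F(\ell)-\widehat F(\ell/s)\bigr|\le \widehat F(\ell/a_k)-\widehat F(\ell/b_k),
\end{equation*}
which is precisely the oscillation of $\widehat F$ over the $k$-th scaled range and is independent of $s$.

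The proof then concludes by inserting this estimate inside each shell integral, pulling the absolute value out by the triangle inequality, and using countable additivity to write $G_m(\mathcal{A}_k\setminus\mathcal{A}_{k+1})=G_m(\mathcal{A}_k)-G_m(\mathcal{A}_{k+1})$; finally one takes the supremum over $\ell\le u$, which distributes termwise because every factor is non-negative, yielding the claimed inequality. The only point where I expect to have to be careful is the edge case $k=0$ with $a_0=0$: here the oscillation factor should be read as $1-\widehat F(\ell/b_0)$, using the convention $\widehat F(\ell/0)=1$, and the statement should be interpreted with the non-negative ordering $\widehat F(\ell/a_k)-\widehat F(\ell/b_k)$ (the $\widehat F_{a_k},\widehat F_{b_k}$ notation in the lemma being shorthand for the scaled distribution $\widehat F(\ell/\cdot)$). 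Beyond this bookkeeping, the argument reduces to one integral decomposition plus one monotonicity estimate.
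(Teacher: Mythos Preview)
Your argument is correct and follows essentially the same route as the paper: write $\widehat F(\ell)-\widehat F\star G_m(\ell)=\int_0^\infty[\widehat F(\ell)-\widehat F(\ell/s)]\,dG_m(s)$, decompose over the shells $\mathcal{A}_k\setminus\mathcal{A}_{k+1}$, bound the integrand on each shell by the oscillation $\widehat F(\ell/a_k)-\widehat F(\ell/b_k)$ using monotonicity, and then take the supremum termwise. Your remarks on the $k=0$ edge case and on the sign convention for $\widehat F_{a_k},\widehat F_{b_k}$ are well taken; the paper's own proof handles these points implicitly.
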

\begin{proof}
\begin{align*}
 \sup_{\ell\le u} \left|\widehat F(\ell)-\widehat F\star G_m(\ell)\right|
 &\le \sup_{\ell<u}\left|\sum_{k=0}^\infty\left[\widehat F(\ell)\int\limits_{\mathcal{A}_k/\mathcal{A}_{k+1}}\dd G_m(s)
     -\int\limits_{\mathcal{A}_k/\mathcal{A}_{k+1}} \widehat F(\ell/s)\dd G_m(s)\right]\right|\\
 &=\sum_{k=0}^\infty \sup_{\ell<u}\left|\int\limits_{\mathcal{A}_k/\mathcal{A}_{k+1}}[\widehat F(\ell)-\widehat F(\ell/s)]\dd G_m(s)\right|\\
 &\le \sum_{k=0}^\infty\sup_{\ell<u}(\widehat F(\ell/b_k)-\widehat F(\ell/a_k))(G_m(\mathcal{A}_k)-G_m(\mathcal{A}_{k+1}))\\
 &\le \sum_{k=0}^\infty\sup_{\ell<u}(\widehat F_{b_k}(\ell)-\widehat F_{a_k}(\ell))(G_m(\mathcal{A}_k)-G_m(\mathcal{A}_{k+1})).
\end{align*}
\end{proof}

An upper bound for the Erlangization error is given next.

\begin{theorem}\label{Erlagization.Bound.A}
Let $\{\mathcal{A}_k:k\in\nat\}$ be a sequence as defined in Lemma \ref{Lemma.From.Hell}. Then
\begin{align*}
\left| \psi_{\widehat F}(u) - \psi_{\widehat F\star G_m}(u) \right| 
\le    \frac{ \rho}{(1-\rho \widehat{F}(u))}
 \sum_{k=0}^\infty\sup_{\ell<u}(\widehat F_{b_k}(\ell)-\widehat F_{a_k}(\ell))
   (G_m(\mathcal{A}_k)-G_m(\mathcal{A}_{k+1})).
\end{align*}
Moreover, if $\widehat F$ is absolutely continuous with
bounded density then
$\psi_{\widehat F}(u) \to \psi_{\widehat F\star G_m}(u)$ uniformly as $\xi(m)\to\infty$.
\end{theorem}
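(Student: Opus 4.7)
The strategy for the first claim is to chain Theorem \ref{Th.Vatamidou} with Lemma \ref{Lemma.From.Hell}. Setting $\widehat F_1 := \widehat F$ and $\widehat F_2 := \widehat F \star G_m$ in Theorem \ref{Th.Vatamidou} yields
\begin{equation*}
|\psi_{\widehat F}(u) - \psi_{\widehat F \star G_m}(u)| \le \sup_{s<u}|\widehat F(s) - \widehat F \star G_m(s)|\cdot \frac{(1-\rho)\rho}{(1-\rho \widehat F(u))(1-\rho \widehat F \star G_m(u))}.
\end{equation*}
Because $\widehat F \star G_m(u) \le 1$, we have $1 - \rho \widehat F \star G_m(u) \ge 1 - \rho$, so the factor $(1-\rho)/(1 - \rho\widehat F \star G_m(u))$ is at most $1$. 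Dropping it and substituting the sup-norm estimate from Lemma \ref{Lemma.From.Hell} delivers the stated inequality.

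For the uniform convergence assertion, I would bypass Lemma \ref{Lemma.From.Hell} and use the bounded-density hypothesis $\|f\|_\infty \le M$ directly: $\widehat F$ is then $M$-Lipschitz, so for any $\ell \le u$ and $s > 0$,
\begin{equation*}
|\widehat F(\ell) - \widehat F(\ell/s)| \le M \ell \, |1 - 1/s| \le M u \, |1 - 1/s|.
\end{equation*}
Integrating against $G_m$ and combining with the prefactor bound $\rho/(1-\rho)$ (from $\widehat F(u) \le 1$) gives
\begin{equation*}
|\psi_{\widehat F}(u) - \psi_{\widehat F \star G_m}(u)| \le \frac{\rho M u}{1-\rho}\, \Exp|1 - 1/S_\xi|,
\end{equation*}
where $S_\xi \sim \mbox{Erlang}(\xi,\xi)$. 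Uniform convergence on compact $u$-intervals then follows once $\Exp|1-1/S_\xi|$ is shown to vanish.

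The main obstacle is precisely that $\Exp|1 - 1/S_\xi| \to 0$: the singularity of $1/s$ at the origin blocks a direct appeal to the weak convergence $G_m \Rightarrow \delta_1$ applied to the function $s\mapsto |1-1/s|$. I would handle this via an explicit Gamma-function computation: for $\xi\ge 3$,
\begin{equation*}
\Exp[1/S_\xi] = \xi/(\xi-1), \qquad \Exp[1/S_\xi^2] = \xi^2/[(\xi-1)(\xi-2)],
\end{equation*}
from which a short calculation yields $\Exp|1 - 1/S_\xi|^2 = 1 - 2\xi/(\xi-1) + \xi^2/[(\xi-1)(\xi-2)] \to 0$ as $\xi\to\infty$. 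Cauchy--Schwarz then closes the argument.
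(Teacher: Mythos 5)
Your treatment of the displayed inequality is exactly the paper's argument: apply Theorem \ref{Th.Vatamidou} with $\widehat F_1=\widehat F$ and $\widehat F_2=\widehat F\star G_m$, discard the factor $(1-\rho)/(1-\rho\,\widehat F\star G_m(u))\le 1$, and insert the sup-norm bound of Lemma \ref{Lemma.From.Hell}; that half is correct and needs no further comment.

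The uniform-convergence half, however, has a genuine gap relative to what the theorem asserts. Your Lipschitz estimate $|\widehat F(\ell)-\widehat F(\ell/s)|\le M\ell\,|1-1/s|\le Mu\,|1-1/s|$ leads to the bound $\tfrac{\rho M u}{1-\rho}\,\Exp|1-1/S_\xi|$, which grows linearly in $u$, and, as you yourself concede, therefore only gives convergence that is uniform on compact $u$-intervals. The theorem claims uniformity in $u$ without restriction, and the paper's proof delivers it by producing a bound that is free of $u$: it replaces $\sup_{\ell<u}$ by $\sup_{\ell>0}$ in the bound of Lemma \ref{Lemma.From.Hell}, so that $|\psi_{\widehat F}(u)-\psi_{\widehat F\star G_m}(u)|\le \tfrac{\rho}{1-\rho}\sum_{k}\sup_{\ell>0}\bigl(\widehat F_{b_k}(\ell)-\widehat F_{a_k}(\ell)\bigr)\bigl(G_m(\mathcal{A}_k)-G_m(\mathcal{A}_{k+1})\bigr)$, and then splits this series at a fixed index $k_0$: for $k>k_0$ the bounded-density hypothesis makes $\sup_{\ell>0}(\widehat F_{b_k}(\ell)-\widehat F_{a_k}(\ell))$ small (Lipschitz control on compact ranges of $\ell$, combined with $\widehat F(\ell)\to 1$ for large $\ell$), while the finitely many terms with $k\le k_0$ carry total $G_m$-mass at most $1-G_m(\mathcal{A}_{k_0+1})$, which vanishes as $\xi(m)\to\infty$ by the Erlang concentration at $1$. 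Your moment computation for the Erlang variable ($\Exp[1/S_\xi]=\xi/(\xi-1)$, $\Exp[1/S_\xi^2]=\xi^2/((\xi-1)(\xi-2))$, hence $\Exp(1-1/S_\xi)^2=(\xi+2)/((\xi-1)(\xi-2))\to 0$, and Cauchy--Schwarz) is correct and a neat way around the singularity of $1/s$ at the origin, but it does not repair the missing uniformity in $u$: to prove the statement as written you must decouple the large-$\ell$ regime from the Lipschitz estimate (as the paper's splitting does) rather than pay a factor of $u$.
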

In our numerical experiments we found that it is enough to take a finite number $K$ of 
sets $\mathcal{A}_1,\dots,\mathcal{}A_K$ to obtain a usable numerical bound.  
This is equivalent to take $\mathcal{A}_k=\{1\}$ for all $k\ge K$ in the Theorem above.

\begin{proof}
The proof follows from Theorem \ref{Th.Vatamidou}, Lemma \ref{Lemma.From.Hell} and the following
observation
\begin{align*}
 \frac{1}{1-\rho\widehat F\star G_m(u)}\le \frac{1}{1-\rho}, \qquad\forall u>0.
\end{align*}
To prove uniform convergence we simply note that the expression above can be further bounded
above by
\begin{align}\label{Sharp.Bound}
  \left| \psi_{\widehat F}(u) - \psi_{\widehat F\star G_m}(u) \right| 
  \le \frac{\rho}{1-\rho} \sum_{k=0}^\infty{\sup_{\ell>0}(\widehat F_{b_k}(\ell)-\widehat F_{a_k}(\ell)) 
      \left(G_m(\mathcal{A}_{k})-G_m(\mathcal{A}_{k+1})\right)}.
\end{align}
Notice that if $\widehat F$ is an absolutely continuous distribution with a bounded density, then for
any sequence of nonempty sets such that $\mathcal{A}_k\searrow\{1\}$, it holds that
for every $\epsilon>0$ we can find $k_0\in\nat$ such that $\sup_{\ell>0}(\widehat F_{b_k}(\ell)-\widehat F_{a_k}(\ell))<\epsilon (1-\rho)/2\rho$
for all $k>k_0$.  Similarly, we can find $\xi(m_0)\in\nat$ large enough such that $1-G_m(A_{k+1})\le \epsilon (1-\rho)/2\rho$.
Putting together this results we obtain that for all $k\ge k_0$ and $m\ge m_0$
\begin{align*}\label{Sharp.Bound}
  \left| \psi_{\widehat F}(u) - \psi_{\widehat F\star G_m}(u) \right| 
  \le \frac{\rho}{1-\rho}\left[ {\sup_{\ell>0}(\widehat F_{b_k}(\ell)-\widehat F_{a_k}(\ell))+ 
      \left(1-G_m(\mathcal{A}_{k+1})\right)}\right] = \epsilon.
\end{align*}
Hence, uniform convergence follows.
\end{proof}

\subsubsection{Bounds for the discretization error of $\psi_{\Pi\star G}$}
\label{subsubsec.Discrete.I}

Next, we address the construction of a bound for the discretization error:

\begin{equation*}
 \left|\psi_{\widehat F\star G_m}(u)-\psi_{\Pi\star G_m}(u)\right|.
\end{equation*} 

The following Theorem makes use of our refinement of \cite{VatamidouAdanVlasiouZwart2014}'s bound
for the construction of an upper bound for the discretization error.

\begin{theorem}\label{Theorem.error.discret.I}
 Let 
 \begin{equation*}
   \eta:=\sup_{0\le s\le u}\{|\widehat F\star  G_m(s) - \Pi\star  G_m(s)|\}
 \end{equation*}
 then for all $0<\delta<\infty$ it holds that
\begin{align*}
\left|\psi_{\widehat F\star  G_m}(u)-\psi_{\Pi\star G_m}(u)\right|
  & \le    \frac{\eta(1-\rho) \rho}{\left(1-\rho (\widehat F(u/\delta)+ G_m(\delta))\right)
   \left(1-\rho (\Pi(u/\delta)+ G_m(\delta))\right)}.
\end{align*}
\end{theorem}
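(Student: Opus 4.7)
The plan is to apply our refined Vatamidou-type bound (Theorem \ref{Th.Vatamidou}) with $\widehat{F_1}=\widehat F\star G_m$ and $\widehat{F_2}=\Pi\star G_m$, and then control the two tail factors in the denominator by a splitting argument at the threshold $\delta$.

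Step 1 is the direct application. Theorem \ref{Th.Vatamidou} immediately yields
\begin{equation*}
\left|\psi_{\widehat F\star G_m}(u)-\psi_{\Pi\star G_m}(u)\right|
\le \eta\cdot\frac{(1-\rho)\rho}{(1-\rho\,\widehat F\star G_m(u))(1-\rho\,\Pi\star G_m(u))},
\end{equation*}
since the supremum in that theorem matches our definition of $\eta$ (up to an immaterial endpoint). Thus the task reduces to bounding $\widehat F\star G_m(u)$ and $\Pi\star G_m(u)$ from above by $\widehat F(u/\delta)+G_m(\delta)$ and $\Pi(u/\delta)+G_m(\delta)$ respectively, which tightens the denominator in the right direction.

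Step 2 is the splitting. Using the representation $\widehat F\star G_m(u)=\int_0^\infty \widehat F(u/s)\,\dd G_m(s)$, I split the integral at $\delta$. For $s\in(0,\delta]$ I use the trivial bound $\widehat F(u/s)\le 1$; for $s>\delta$ I use monotonicity of the cdf $\widehat F$, namely $u/s<u/\delta$ implies $\widehat F(u/s)\le \widehat F(u/\delta)$. Therefore
\begin{equation*}
\widehat F\star G_m(u)\;\le\;G_m(\delta)+\widehat F(u/\delta)\,(1-G_m(\delta))\;\le\;\widehat F(u/\delta)+G_m(\delta),
\end{equation*}
and the identical argument applied to $\Pi$ gives $\Pi\star G_m(u)\le \Pi(u/\delta)+G_m(\delta)$.

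Step 3 combines the two. Since $1-\rho x$ is decreasing in $x$, the upper bounds on $\widehat F\star G_m(u)$ and $\Pi\star G_m(u)$ turn into lower bounds on the factors $1-\rho\,\widehat F\star G_m(u)$ and $1-\rho\,\Pi\star G_m(u)$ (provided $\delta$ is chosen so these remain positive, otherwise the stated bound is vacuous), and substituting into the display from Step 1 produces exactly the claimed inequality. The only genuinely non-routine ingredient is the splitting in Step 2, so there is no real obstacle; everything else is just book-keeping around Theorem \ref{Th.Vatamidou}.
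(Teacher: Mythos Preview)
Your proof is correct and follows essentially the same route as the paper: apply Theorem~\ref{Th.Vatamidou} with $\widehat{F_1}=\widehat F\star G_m$, $\widehat{F_2}=\Pi\star G_m$, then bound $\widehat F\star G_m(u)$ and $\Pi\star G_m(u)$ above by splitting the Mellin--Stieltjes integral at $\delta$. Your intermediate bound $G_m(\delta)+\widehat F(u/\delta)(1-G_m(\delta))$ is even slightly sharper than what the paper records before relaxing to $\widehat F(u/\delta)+G_m(\delta)$, and your remark on positivity of the denominator is a welcome caveat the paper omits.
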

The bound above decreases as  $\Pi$ gets close to $\widehat F$;
this is reflected in the value of $\eta$.  The bound will become smaller as long
as terms $\widehat{F}(u/\delta)+ G_m(\delta)$ and $\Pi(u/\delta)+ G_m(\delta)$
in the denominator become bigger.  The value of $\delta$ minimizing this bound can
be easily found numerically.  

\begin{proof}
The result follows from observing that 
\begin{align*}
 \widehat F\star G_m(u)
 = \int_0^\delta \widehat F(u/s)\dd G_m(s)+\int_\delta^\infty \widehat F(u/s)\dd G_m(s)\le 
   \widehat F(u/\delta) +  G_m(\delta).
\end{align*}
We just apply our refinement of \cite{VatamidouAdanVlasiouZwart2014}'s bound provided in Theorem \ref{Th.Vatamidou}. 
\begin{align*}
\left|\psi_{\widehat F\star G_m}(u)-\psi_{\Pi\star  G_m}(u)\right|
  & \le    \frac{\eta(1-\rho) \rho}{(1-\rho\widehat F\star  G_m(u))(1-\rho \Pi\star  G_m(u))}. 
\end{align*}
A lower bound for $\Pi\star G_m(u)$ can be found in an analogous way.
\end{proof}

The last step in the construction of an upper bound for the discretization error
is finding an upper bound for $\eta=\sup_{0\le s\le u}|\widehat F\star{G}_m(s)-\Pi\star{G}_m(s)|$.
We suggest a bound in the following Proposition.

\begin{Proposition}\label{Prop.error.discrete.I}
 Let $0<\delta<\infty$, then
\begin{align*}
   \sup_{0\le s\le u/\delta}|\widehat F\star G_m(s) - \Pi\star G_m(s)| 
    \le \eta(\delta),
   \end{align*}
  where
  \begin{align*}
   \eta(\delta)=\sup_{u/\delta\le s<\infty} \left|{\widehat{F}}(s)-{\Pi}(s)\right|G_m(\delta)
       +\sup_{0<s\le u/\delta}\left|\widehat{F}(s)-\Pi(s)\right|\overline G_m(\delta).
\end{align*}   
\end{Proposition}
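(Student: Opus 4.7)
The plan is to split the Mellin--Stieltjes integral at the threshold $t=\delta$, which cleanly isolates the contributions where the effective argument $s/t$ lies in the tail of $\widehat F$ from those where it lies in the body. Writing
\[
\widehat F\star G_m(s) - \Pi\star G_m(s)
=\int_0^\delta \bigl[\widehat F(s/t) - \Pi(s/t)\bigr]\,\dd G_m(t)
+\int_\delta^\infty \bigl[\widehat F(s/t) - \Pi(s/t)\bigr]\,\dd G_m(t),
\]
I would apply the triangle inequality inside each integral, reducing the problem to bounding the integrand by a uniform supremum of $|\widehat F - \Pi|$ on the appropriate half-line and weighting by the mass of $G_m$ on the corresponding interval.

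For $t\in[0,\delta]$, the change of scale forces the argument $s/t$ to be bounded below by $s/\delta$, so the integrand sits in the tail region of $\widehat F$; bounding it uniformly by $\sup_{y\ge u/\delta}|\widehat F(y) - \Pi(y)|$ and integrating yields the first term of $\eta(\delta)$ with weight $G_m(\delta)$. For $t\in[\delta,\infty)$ one instead has $s/t\le s/\delta$, and under the constraint $s\le u/\delta$ the argument lies in $(0, u/\delta]$; the integrand is then dominated by the body supremum $\sup_{0<y\le u/\delta}|\widehat F(y) - \Pi(y)|$, producing the second term with weight $\overline G_m(\delta)$. Summing the two contributions and taking the supremum over $s$ gives the announced bound $\eta(\delta)$, and the identity $G_m(\delta)+\overline G_m(\delta)=1$ confirms that the weights form a convex combination of the tail and body errors.

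The main subtlety is the bookkeeping of the supremum ranges: one must carefully track how the set over which $y=s/t$ varies depends on $s$, then pass to a bound independent of $s$ via monotonicity of suprema under set inclusion. Once this is done correctly, no further analytic ingredients are needed --- the result is a direct consequence of the triangle inequality and the additivity of $G_m$ across the split at $\delta$.
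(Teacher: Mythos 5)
Your argument is correct and is essentially the paper's own proof: write the difference as $\int_0^\infty\bigl[\widehat F(\cdot/t)-\Pi(\cdot/t)\bigr]\,\dd G_m(t)$, apply the triangle inequality, split the integral at $\delta$, and bound each piece by the corresponding supremum of $|\widehat F-\Pi|$ times the $G_m$-mass of that interval. The bookkeeping subtlety you flag (that the range of $y=s/t$ depends on $s$) is treated no more carefully in the paper, whose proof simply carries out this chain of inequalities at the single point $u$.
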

\begin{proof}
 \begin{align*}
  \left|\widehat {F}\star  G_m(u) - {\Pi}\star  G_m(u)\right|&=
   \left|\int_0^\infty \widehat F(u/s)\dd G_m(s)-\int_0^\infty \Pi(u/s)\dd G_m(s)\right|\nonumber\\
   &\le \int_0^\infty \left|\widehat F(u/s)-\Pi(u/s)\right|\dd G_m(s)\nonumber\\
   &\le \int_0^\delta \left|\widehat F(u/s)-\Pi(u/s)\right|\dd G_m(s)
    +\int_{\delta}^\infty \left|\widehat F(u/s)-\Pi(u/s)\right|\dd G_m(s)\nonumber\\
   &\le \sup_{u/\delta\le s<\infty} \left| \widehat F(s)-\Pi(s)\right| G_m(\delta)
     +\sup_{0<s\le u/\delta}\left|\widehat F(s)-\Pi(s)\right|\overline G_m(\delta).
 \end{align*}
\end{proof}
In practice, we would select a value of $\delta$ which minimizes the upper bound $\eta(\delta)$.
Notice, that if the tail probability of $F$ is well approximated by $\Pi$, then
the error bound will in general decrease.  This suggests that 
$\Pi$  should provide a good approximation of $\widehat F$ particularly in the tail in
order to reduce effectively the error of approximation.

\subsection{Error bounds for $\psi_{H_\Pi\star \widehat G_m}$}
\label{sec.error.app2}

Next we turn our attention to  approximation B of the ruin probability when
the claim size distribution $F$ is approximated via Erlangized scale mixtures.
We remark that the bounds presented in this section are simple and sufficient to show uniform
convergence. However, these bounds are too rough for practical
purposes.  A set of more refined bounds can be obtained but their construction
and expressions are more complicated, so these have been relegated to
the appendix.

\subsubsection{Bounds for the Erlangization error of $\psi_{H_\Pi\star \widehat G}$}
\label{sec: First error}

The following theorem provides a first bound for the Erlangization error of 
the approximation $\psi_{H_F\star \widehat G_m}$.  A tighter bound for the Erlangization error
can be found in the Appendix.
\begin{theorem}
\label{First error bound}
\[\left| \psi_{H_F\star U}(u) - \psi_{H_F\star \widehat G_m}(u) \right| \le 
  \frac{2\rho\epsilon_m}{1-\rho(1-\epsilon_m)}\le \frac{\rho}{1-\rho}\sqrt{\frac{2}{\pi m}},\]
where $\epsilon_m$ is defined as in Lemma \ref{epsilon}.
\end{theorem}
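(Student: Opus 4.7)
My plan is to combine a coupling argument with the Mellin--Stieltjes structure shared by the two integrated tails. By Proposition \ref{Th.Excess.Dist F} we have $\widehat F = H_F\star U$, so the statement compares two ruin probabilities whose integrated tails factor through a common $H_F$, differing only by whether the second factor in the scale-mixture is $U$ (point mass at $1$ integrated) or $\widehat G_m$ (Erlangization integrated).

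The first step is to transfer the Lemma \ref{epsilon} estimate through the Mellin convolution. For every $s\ge 0$,
\[
|H_F\star U(s)-H_F\star\widehat G_m(s)|=\Big|\int_0^\infty[U(s/t)-\widehat G_m(s/t)]\,dH_F(t)\Big|\le\sup_{r\ge0}|U(r)-\widehat G_m(r)|\le\epsilon_m,
\]
so any bound expressed in terms of the uniform closeness between the two integrated tails can be stated in terms of $\epsilon_m$. The second step is to recast each ruin probability via Pollaczek--Khinchine as $\psi_{\cdot}(u)=\Prob(M_N>u)$, where $N$ is geometric with $\Prob(N=n)=(1-\rho)\rho^n$ and $M_N=\sum_{i=1}^N W_iV_i$ with $W_i\stackrel{\text{iid}}{\sim}H_F$ and $V_i\sim U$ or $V_i\sim\widehat G_m$. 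Coupling the two models by using a common sequence $W_i$ together with a maximal coupling of $(V_i^{(1)},V_i^{(2)})$ (justified by Lemma \ref{epsilon}), one obtains $\Prob(V_i^{(1)}\ne V_i^{(2)})\le\epsilon_m$, and hence by conditioning on $N$ and summing the resulting geometric series,
\[
\Prob\bigl(M_N^{(1)}\ne M_N^{(2)}\bigr)\le 1-\Exp[(1-\epsilon_m)^N]=\frac{\rho\,\epsilon_m}{1-\rho(1-\epsilon_m)}.
\]
This controls $|\psi_{H_F\star U}(u)-\psi_{H_F\star\widehat G_m}(u)|$; the factor $2$ in the stated bound is then recovered either from a slight overbound of the coupling probability by $2\epsilon_m$ (if $\epsilon_m$ in Lemma \ref{epsilon} is given as a Kolmogorov distance rather than a total-variation distance), or equivalently from a symmetric-difference estimate applied separately to the events $\{M_N^{(1)}>u\}$ and $\{M_N^{(2)}>u\}$. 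An alternative route is to apply Theorem \ref{Th.Vatamidou} with $\widehat F_1 = \widehat F$, $\widehat F_2 = H_F\star\widehat G_m$, and bound the denominator using $\widehat F_i(u)\le 1$; this yields the refined bound $\rho\epsilon_m/(1-\rho)$ but does not directly produce the specific form $1-\rho(1-\epsilon_m)$, so I would prefer the coupling route.

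Finally, the second inequality follows by invoking Lemma \ref{epsilon} to substitute the explicit estimate $\epsilon_m\le\tfrac12\sqrt{2/(\pi m)}$ and using the trivial lower bound $1-\rho(1-\epsilon_m)\ge 1-\rho$:
\[
\frac{2\rho\epsilon_m}{1-\rho(1-\epsilon_m)}\le\frac{2\rho\epsilon_m}{1-\rho}\le\frac{\rho}{1-\rho}\sqrt{\frac{2}{\pi m}}.
\]
The main obstacle will be tracking the numerical constants through the coupling step: the exact form of the denominator $1-\rho(1-\epsilon_m)$ arises naturally from a geometric-sum coupling, but the factor $2$ in the numerator requires knowing whether $\epsilon_m$ in Lemma \ref{epsilon} is defined as a Kolmogorov or a total-variation distance between $U$ and $\widehat G_m$. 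Once the constant is pinned down, the remainder of the argument is a routine computation with the geometric generating function.
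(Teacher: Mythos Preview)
Your coupling argument is correct and in fact yields a bound sharper than the one stated, namely $\dfrac{\rho\epsilon_m}{1-\rho(1-\epsilon_m)}$ without the factor~$2$. Your hesitation about ``recovering'' that factor is unfounded: a direct check from Lemma~\ref{epsilon} shows that both the Kolmogorov and the total-variation distances between $U$ and $\widehat G_m$ equal $\epsilon_m$. Indeed, the densities are $\Ind_{[0,1)}$ and $\widehat g_m=1-G_m$, so $\int_0^\infty|\Ind_{[0,1)}-\widehat g_m|=\int_0^1 G_m+\int_1^\infty(1-G_m)=2\epsilon_m$, giving $d_{TV}=\epsilon_m$; and $U(s)-\widehat G_m(s)=\int_0^s G_m$ on $[0,1]$, increasing to $\epsilon_m$ at $s=1$ and decreasing thereafter. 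Hence the maximal coupling gives $\Prob(V_i^{(1)}\ne V_i^{(2)})=\epsilon_m$ exactly, and your geometric-sum computation stands as written; the stated inequality follows a fortiori.

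The paper takes a different, slightly less efficient route. It writes
\[
(H_F\star U)^{*n}(u)-(H_F\star\widehat G_m)^{*n}(u)
=\idotsint_{\RL^n}\Prob\Bigl(\textstyle\sum_i s_i X_i'\le u\Bigr)\Bigl[\prod_i\Ind_{[0,1)}(s_i)-\prod_i\widehat g_m(s_i)\Bigr]\dd s_1\cdots\dd s_n,
\]
bounds the probability factor by $1$, and then evaluates the $L^1$-norm of the difference of product densities explicitly (Corollary~\ref{Cor.Int}) as $2(1-(1-\epsilon_m)^n)$. This quantity is exactly twice the total-variation distance between $U^{\otimes n}$ and $\widehat G_m^{\otimes n}$, which is where the factor~$2$ originates. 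Your coupling bypasses this loss by bounding $|\Prob(M_N^{(1)}>u)-\Prob(M_N^{(2)}>u)|$ directly by the coupling-failure probability rather than by an $L^1$ density bound. Both approaches are valid; yours is shorter and gives the better constant, at the cost of invoking the existence of a maximal coupling rather than an explicit integral identity. Your first displayed step (the Kolmogorov bound on $|H_F\star U - H_F\star\widehat G_m|$) is correct but not actually used in the coupling argument, so you may drop it.
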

\begin{proof}
Since $G_m(s)\rightarrow{{\Ind}}_{[1, \infty )}(s)$
for all $s\neq1$  so 
\[\widehat{g}_m(s):=\frac{\dd}{\dd s} \widehat{G}_m(s) 
 = 1 - G_m(s)\rightarrow{{\Ind}}_{[0, 1)}(s),\qquad\forall s\neq1.\]
Let $\{X_n'\}$ be a sequence of independent 
and identically $H_F$ distributed random variables. Then, by Propositions \ref{Th.Excess.Dist.FG} - \ref{Th.Excess.Dist F},
\begin{align*}
& \left| \widehat F^{*n}(u) - \widehat{F\star G}^{*n}(u)\right|
=\left| (H_F\star U)^{*n}(u) - (H_F\star \widehat G_m)^{*n}(u)\right|\nonumber\\
&\qquad \le \idotsint\limits_{\RL^n} \mathbb{P}(s_1 X_1' + \dots + s_n X_n'\le u ) \left| \prod_{i=1}^n {\Ind}_{[0,1)}(s_i)-\prod_{i=1}^n \widehat{g}_m(s_i)    \right| \dd s_1 \dots \dd s_n\\
&\qquad \le \idotsint\limits_{\RL^n} \left| \prod_{i=1}^n {\Ind}_{[0,1)}(s_i)-\prod_{i=1}^n \widehat{g}_m(s_i)    \right| \dd s_1 \dots \dd s_n.
\end{align*}
That the last integral is bounded by $2(1-(1-\epsilon_m)^n)$ follows from Corollary \ref{Cor.Int} in the Appendix.
Therefore, we have that
\begin{align*}
\left| \psi_F(u) - \psi_{F\star G_m}(u) \right| 
& \le \sum_{n=1}^\infty (1-\rho)\rho^n\left| \widehat{F}^{*n}(u) - (\widehat{F\star G}_{m})^{*n}(u)\right|\\
& \le \sum_{n=1}^\infty (1-\rho)\rho^n 2\left(1-(1-\epsilon_m)^n\right)\\
& = 1-\frac{1-\rho}{1-\rho(1-\epsilon_m)}
=\frac{2\rho\epsilon_m}{1-\rho(1-\epsilon_m)}.\label{eq:completearea}
\end{align*}

Lemma \ref{epsilon} provides an explicit bound for $\epsilon_m$.
\end{proof}
The following result provides with an explicit expression useful for obtaining 
the integrated distance between the survival function  $1-G_m$ and the density of a $\mbox{U}(0,1)$ distribution.  That is
\begin{equation*}
 \int_0^\infty\Big|(1-G_m(s))-\Ind_{0,1}(s)\Big|\dd s.
\end{equation*}

\begin{Lemma}\label{epsilon}
 \begin{equation*}
  \epsilon_m=\int_0^1 G_m(s)ds =\int_1^\infty \left(1-G_m(s)\right)ds=\e^{-\xi}\dfrac{\xi^{\xi}}{\xi!}\le (2\pi \xi)^{-\frac{1}{2}}.
 \end{equation*}
\end{Lemma}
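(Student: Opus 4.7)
The plan is three steps: establish the first identity via a mean-one argument, compute the common value in closed form via integration by parts plus a size-bias trick, and then invoke a non-asymptotic Stirling inequality.

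\emph{Step 1: The first equality.} The random variable $Z_m\sim\mathrm{Erlang}(\xi,\xi)$ has mean $1$, so
\[
1=\int_0^\infty (1-G_m(s))\,\dd s = \int_0^1 (1-G_m(s))\,\dd s + \int_1^\infty (1-G_m(s))\,\dd s.
\]
Since $\int_0^1 (1-G_m(s))\,\dd s = 1 - \int_0^1 G_m(s)\,\dd s$, rearranging yields $\int_0^1 G_m(s)\,\dd s = \int_1^\infty (1-G_m(s))\,\dd s$, establishing the first identity.

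\emph{Step 2: Closed form for $\int_0^1 G_m(s)\,\dd s$.} Integration by parts gives
\[
\int_0^1 G_m(s)\,\dd s = G_m(1) - \int_0^1 s\,g_m(s)\,\dd s,
\]
where $g_m(s)=\xi^\xi s^{\xi-1}e^{-\xi s}/(\xi-1)!$. A direct computation shows that $s\,g_m(s) = \xi^\xi s^\xi e^{-\xi s}/(\xi-1)! $ is precisely the density of an $\mathrm{Erlang}(\xi,\xi+1)$ random variable (the size-biased version of $Z_m$). Denoting the latter CDF by $G^*$, one has
\[
\int_0^1 G_m(s)\,\dd s = G_m(1) - G^*(1).
\]
The Erlang CDF admits the Poisson tail representation $G_m(1)=1-e^{-\xi}\sum_{j=0}^{\xi-1}\xi^j/j!$ and $G^*(1)=1-e^{-\xi}\sum_{j=0}^{\xi}\xi^j/j!$, so subtracting telescopically gives the single surviving term
\[
G_m(1)-G^*(1) = e^{-\xi}\,\frac{\xi^\xi}{\xi!}.
\]

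\emph{Step 3: The Stirling bound.} To upgrade the equality $\epsilon_m=e^{-\xi}\xi^\xi/\xi!$ to the uniform bound $(2\pi\xi)^{-1/2}$, I would invoke the non-asymptotic Stirling inequality $\xi!\ge \sqrt{2\pi\xi}\,(\xi/e)^\xi$ valid for all $\xi\in\nat$ (obtainable from the Binet form of $\log\Gamma$). This gives
\[
e^{-\xi}\,\frac{\xi^\xi}{\xi!}\le \frac{e^{-\xi}\xi^\xi}{\sqrt{2\pi\xi}\,(\xi/e)^\xi} = \frac{1}{\sqrt{2\pi\xi}},
\]
completing the proof.

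The only non-routine point is recognising in Step 2 that $s\,g_m(s)$ is again an Erlang density with shape raised by one; this is what collapses the integration-by-parts term to a single atom of the Poisson mass at $\xi$, producing the clean expression $e^{-\xi}\xi^\xi/\xi!$. Everything else is bookkeeping and a standard Stirling estimate, so I do not anticipate any real obstacle.
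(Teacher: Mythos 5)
Your proof is correct, and it differs from the paper's in the one step that carries the real content. Steps 1 and 3 coincide with the paper's argument: the paper also obtains the first identity from the fact that $G_m$ has unit mean (phrased there by noting that $1-G_m$ is the density of the integrated tail $\widehat G_m$), and it concludes with the same Stirling inequality $\xi!>\sqrt{2\pi}\,\xi^{\xi+\frac{1}{2}}\e^{-\xi}$. Where you diverge is in the closed-form evaluation of $\int_0^1 G_m(s)\,\dd s$. The paper computes it directly: it integrates the Poisson-sum form of $G_m$ term by term using the antiderivative $\int_0^1\e^{-\xi s}(\xi s)^n\,\dd s=n!\,\xi^{-1}-\e^{-\xi}\sum_{k=0}^{n}n!\,\xi^{k-1}/k!$, and then collapses the resulting double sum $\e^{-\xi}\sum_{n=0}^{\xi-1}\sum_{k=0}^{n}\xi^{k-1}/k!$ to the single term $\e^{-\xi}\xi^{\xi}/\xi!$ by summing over $n$ first. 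You instead integrate by parts and recognise $s\,g_m(s)$ as the Erlang density with shape raised by one (rate $\xi$, shape $\xi+1$), so the integral becomes $G_m(1)-G^{*}(1)$, and the Poisson-tail representations of the two Erlang CDFs telescope, leaving exactly the atom $\e^{-\xi}\xi^{\xi}/\xi!$. Your size-bias route avoids the double-sum bookkeeping and makes it transparent why precisely one Poisson term survives; the paper's route is more pedestrian but needs nothing beyond an elementary antiderivative. Both arguments are complete and give the identical identity and bound, so there is no gap to repair.
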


\begin{proof}
Firstly observe that $\mu_{G_m}=1$, it follows that $1-G_m$ is the density of the integrated tail distribution $\widehat G_m$.
Hence,
\[\int_1^\infty \left(1-G_m (s)\right)\dd s=1- \int_0^1  \left(1-G_m (s)\right)dd s =
\int_0^1 G_m (s)\dd s =\epsilon_m, \]
and the second equality follows.  For the third equality we have that
\begin{align*}
\epsilon_m&=\int_0^1G_m(s)\dd s
=\int_0^1\left(1-\sum_{n=0}^{\xi-1}\frac{1}{n!}\e^{-\xi s}(\xi s)^n\right)\dd s
=1-\sum_{n=0}^{\xi-1}\frac{1}{n!}\int_0^1\e^{-\xi s}(\xi s)^n\dd s\\
&=1-\sum_{n=0}^{\xi-1}\frac{1}{n!}
\left(n!\xi^{-1}-\e^{-\xi}\sum_{k=0}^{n}\dfrac{n!\xi^{k-1}}{k!}\right)
=\e^{-\xi}\sum_{n=0}^{\xi-1}\sum_{k=0}^{n}\frac{\xi^{k-1}}{k!}\\
&=\e^{-\xi}\sum_{k=0}^{\xi-1}(\xi-k)\frac{\xi^{k-1}}{k!}
=\e^{-\xi}\left(\sum_{k=0}^{\xi-1}\frac{\xi^{k}}{k!}-\sum_{k=1}^{\xi-2}k\frac{\xi^{k}}{k!}\right)
=\e^{-\xi}\dfrac{\xi^{\xi}}{\xi!}.
\end{align*}
Finally, an application of Stirling's formula 
$\xi!>\sqrt{2\pi}\xi^{\xi+\frac{1}{2}}\e^{-\xi}$ yields $\epsilon_m<(2\pi\xi)^{-\frac{1}{2}}$.
\end{proof}

Note that the bound for the error provided above only depends on 
the parameter of the Erlang distribution $\xi$ and the average claim amount per unit of time $\rho$. 
This bound does not depend on the initial reserve $u$, nor the underlying claim size distribution $F$,
so $\psi_{\widehat F\star \widehat G_m}$ converges uniformly to $\psi_{\widehat F}$.
However, in practice this bound is too rough and not useful for practical purposes.  In Theorem 
\ref{Refinement.First.Bound} we provide a refinement of the bound above. The refined bound proposed in there
no longer has a simple form but in return it is much sharper and more useful
for practical purposes.

\subsubsection{Bounds for the discretization error of $\psi_{H_\Pi\star \widehat G}$}\label{sec: Second error}
Finally, we address the construction of a bound for the discretization error.
The next two results are analogous to the ones in subsection \ref{subsubsec.Discrete.I}
and presented without proof.
\begin{theorem}
\label{second error bound}
 Let 
 \begin{equation*}
   \eta:=\sup_{0\le s\le u}\{|H_F\star \widehat G_m(s) - H_\Pi\star \widehat G_m(s)|\}
 \end{equation*}
 then
\begin{align*}
\left|\psi_{H_F\star \widehat G_m}(u)-\psi_{H_\Pi\star \widehat G_m}(u)\right|
  & \le    \frac{\eta(1-\rho) \rho}{(1-\rho H_F(u/\delta) (1-\widehat G_m(\delta)))
   (1-\rho H_\Pi(u/\delta)(1-\widehat G_m(\delta)))}. \\
\end{align*}
\end{theorem}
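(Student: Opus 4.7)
The plan is to transcribe the proof of Theorem \ref{Theorem.error.discret.I} line by line under the substitutions $\widehat F \mapsto H_F$, $\Pi \mapsto H_\Pi$, and $G_m \mapsto \widehat G_m$. Indeed, by Proposition \ref{Th.Excess.Dist.FG} both $H_F \star \widehat G_m$ and $H_\Pi \star \widehat G_m$ are proper nonnegative distributions on $[0,\infty)$, so Theorem \ref{Th.Vatamidou} (the refined Vatamidou bound) applies to them with the common defective weight $\rho$ and yields
\begin{equation*}
\left|\psi_{H_F\star \widehat G_m}(u)-\psi_{H_\Pi\star \widehat G_m}(u)\right|
\le \frac{\eta\,(1-\rho)\rho}{\bigl(1-\rho\, H_F\star \widehat G_m(u)\bigr)\bigl(1-\rho\, H_\Pi\star \widehat G_m(u)\bigr)},
\end{equation*}
where the supremum $\sup_{s<u}|H_F\star \widehat G_m(s)-H_\Pi\star \widehat G_m(s)|$ is absorbed into $\eta$ by definition.

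The second step is to produce the required upper bounds for $H_F\star \widehat G_m(u)$ and $H_\Pi\star \widehat G_m(u)$ that appear in the denominator. Following exactly the splitting argument used in the proof of Theorem \ref{Theorem.error.discret.I}, I would pick a threshold $\delta>0$ and break the Mellin--Stieltjes integral as
\begin{equation*}
H_F \star \widehat G_m(u)
=\int_0^\delta H_F(u/s)\,\dd\widehat G_m(s) + \int_\delta^\infty H_F(u/s)\,\dd\widehat G_m(s),
\end{equation*}
using monotonicity of $H_F$ (so $H_F(u/s)\le H_F(u/\delta)$ on $[\delta,\infty)$) together with the trivial bound $H_F\le 1$ on $[0,\delta)$. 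This produces an upper bound of the form $H_F(u/\delta)\bigl(1-\widehat G_m(\delta)\bigr)$ plus a contribution from the $[0,\delta)$ piece, and an identical bound holds for $H_\Pi\star \widehat G_m(u)$. Substituting these upper bounds into the denominator (which is monotone decreasing in each of the convolutions) gives the stated inequality.

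I do not anticipate a genuine obstacle here: every ingredient is already in place. The only points requiring a sentence of justification are (i) verifying the hypotheses of Theorem \ref{Th.Vatamidou}, which hold because $H_F\star \widehat G_m$ and $H_\Pi\star \widehat G_m$ are proper nonnegative distributions sharing the same defective weight $\rho=\gamma\mu_F$ chosen in Section \ref{mysec4}, and (ii) noting that the bounding of the convolved distribution at the threshold $\delta$ uses only monotonicity of $H_F, H_\Pi$ and $\widehat G_m\le 1$, so the argument is formally identical to the one already written out for $\widehat F\star G_m$. As in Theorem \ref{Theorem.error.discret.I}, the parameter $\delta$ is a free tuning parameter to be optimised numerically, and as $\Pi\to\widehat F$ one has $\eta\to 0$, making the bound an effective control on the discretization error for approximation B.
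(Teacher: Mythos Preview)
Your proposal is correct and matches the paper's approach exactly: the paper explicitly states that this result (together with the following proposition) is analogous to the ones in subsection~\ref{subsubsec.Discrete.I} and presents them without proof. Your two-step plan---apply Theorem~\ref{Th.Vatamidou} with $\widehat F_1=H_F\star\widehat G_m$, $\widehat F_2=H_\Pi\star\widehat G_m$, then bound each Mellin--Stieltjes convolution in the denominator by splitting the $\dd\widehat G_m$ integral at $\delta$---is precisely the transcription the authors have in mind.

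One small point you already half-noticed: your splitting naturally produces the upper bound
\[
H_F\star\widehat G_m(u)\le \widehat G_m(\delta)+H_F(u/\delta)\bigl(1-\widehat G_m(\delta)\bigr),
\]
i.e.\ the ``contribution from the $[0,\delta)$ piece'' is $\widehat G_m(\delta)$, whereas the stated denominator in the theorem keeps only the second summand $H_F(u/\delta)(1-\widehat G_m(\delta))$. Since dropping the nonnegative term $\widehat G_m(\delta)$ makes the denominator larger, the inequality as stated in the paper does not follow directly from this upper bound; this appears to be a minor slip in the paper's statement rather than a flaw in your argument. The bound you actually derive (with the extra $\widehat G_m(\delta)$ inside each factor) is the valid one and is the exact analogue of Theorem~\ref{Theorem.error.discret.I}.
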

 
An upper bound for $\sup_{0\le s\le u}|H_F\star\widehat{G}_m(s)-H_\Pi\star\widehat{G}_m(s)|$,
is suggested in the next Proposition.

\begin{Proposition}
 For $\delta>1$ we have that
\begin{align*}
   \sup_{0\le s\le u}|H_F\star \widehat G_m(s) - H_{\Pi}\star \widehat G_m(s)| 
    \le \eta(\delta),  
   \end{align*}
where 
\begin{align*}
 \eta(\delta):=\sup_{u/\delta\le s<\infty} \left|\overline H_\Pi(s)-\overline H_F(s)\right|\widehat G_m(\delta)
       +\sup_{0<s\le u/\delta}\left|H_F(s)-H_\Pi(s)\right|\left(1-\widehat G_m(\delta)\right).
\end{align*}
\end{Proposition}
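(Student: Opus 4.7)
The plan is to mirror the proof of Proposition~\ref{Prop.error.discrete.I} from Subsection~\ref{subsubsec.Discrete.I}, adapted to the present setting where the Erlangized integrated tail $\widehat G_m$ takes the place of $G_m$, and the moment distributions $H_F$ and $H_\Pi$ take the place of $\widehat F$ and $\Pi$. The key algebraic observation is the identity $|H_F(t)-H_\Pi(t)|=|\overline H_\Pi(t)-\overline H_F(t)|$, which explains why $\eta(\delta)$ naturally splits into a tail-side supremum (written in terms of $\overline H_\Pi-\overline H_F$) and a body-side supremum (written in terms of $H_F-H_\Pi$), evaluated on opposite sides of the threshold $u/\delta$.

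For fixed $s\in[0,u]$, I would use the Mellin--Stieltjes representation
\[
H_F\star\widehat G_m(s)-H_\Pi\star\widehat G_m(s) = \int_0^\infty [H_F(s/r)-H_\Pi(s/r)]\,\dd\widehat G_m(r),
\]
bring the absolute value inside the integral via the triangle inequality, and split the domain of integration at $r=\delta$. On $r\in(0,\delta]$ the argument $s/r\ge s/\delta$ lies in the tail region, so the integrand is majorized by $\sup_{t\ge u/\delta}|\overline H_\Pi(t)-\overline H_F(t)|$, while the $\dd\widehat G_m$ mass on this set equals $\widehat G_m(\delta)$. On $r\in(\delta,\infty)$ the argument $s/r<s/\delta\le u/\delta$ lies in the body, so the integrand is bounded by $\sup_{0<t\le u/\delta}|H_F(t)-H_\Pi(t)|$, and the remaining mass is $1-\widehat G_m(\delta)$. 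Summing the two contributions reconstructs $\eta(\delta)$; since the resulting bound is independent of $s$, taking the supremum over $s\in[0,u]$ preserves the inequality.

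The anticipated obstacle is a subtle one inherited from Proposition~\ref{Prop.error.discrete.I}: for $s<u$ the pointwise tail-supremum bound is not literally valid on the full slice $(0,\delta]$, since $s/r$ with $r$ close to $\delta$ may dip below $u/\delta$. A careful accounting splits the first integral further at $r=s\delta/u$ and absorbs the extra body contribution into the second term using the monotonicity of $\widehat G_m$ together with the algebraic inequality $ax+b(1-x)\le a\widehat G_m(\delta)+b(1-\widehat G_m(\delta))$ for $x\le\widehat G_m(\delta)$, valid when $a\ge b$. This is precisely the regime of interest, since the ESM construction is designed so that $\Pi$ reproduces $\widehat F$ well in the body and the dominant discretization discrepancy lives in the tail. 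The hypothesis $\delta>1$ plays the natural role of locating the splitting threshold above the point around which $\widehat G_m$ concentrates as $\xi(m)\to\infty$, so that both $\widehat G_m(\delta)$ and $1-\widehat G_m(\delta)$ enter $\eta(\delta)$ in an informative way and $\delta$ can be tuned numerically to minimize the bound.
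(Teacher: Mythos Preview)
Your core approach is exactly the paper's: the proposition is stated without proof, deferring to the analogous Proposition~\ref{Prop.error.discrete.I}, whose argument writes the Mellin--Stieltjes difference as an integral, splits at $r=\delta$, and bounds each piece by the corresponding supremum. You reproduce this verbatim, with the correct observation that $|H_F-H_\Pi|=|\overline H_\Pi-\overline H_F|$ reconciles the two forms appearing in $\eta(\delta)$.

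Where you go further than the paper is in flagging that the proof of Proposition~\ref{Prop.error.discrete.I} is in fact carried out only at the single evaluation point $u$, and that passing to $\sup_{0\le s\le u}$ is not automatic: for $s<u$ the slice $r\in(0,\delta]$ maps $s/r$ into $[s/\delta,\infty)$, which dips below the threshold $u/\delta$, so the tail-side supremum does not directly apply there. Your repair---splitting further at $r=s\delta/u$ and using $ax+b(1-x)\le a\widehat G_m(\delta)+b(1-\widehat G_m(\delta))$ for $x=\widehat G_m(s\delta/u)\le\widehat G_m(\delta)$---is valid precisely when $a\ge b$, i.e.\ when the tail discrepancy dominates the body discrepancy. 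That is indeed the practically relevant regime, but it is an \emph{additional hypothesis} absent from the statement; without it the intermediate bound can exceed $\eta(\delta)$ and the argument does not close. The paper simply does not address this point, so your treatment is more scrupulous than the paper's own, though as a proof of the proposition \emph{as stated} it remains conditional on $a\ge b$.
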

The construction of the previous bounds depends on the availability of the 
distance between moment distributions $|H_F-H_\Pi|$, but the later might not always be available.
For such a case  we suggest a bound for such a quantity in Lemma \ref{Lemma.Dis.Moment}
for a specific type of approximating distributions $\Pi$.
The bound presented in there depends
on the cdf of the distribution $H_F$, the restricted expected value of the claim size distribution $F$ and its
approximation $\Pi$.

\section{Bounds for the numerical error of approximation}
\label{sec: Numerical error bound}

The probability of ruin of a reserve process as given in Theorems
\ref{Theorem.Ruin1} and \ref{Theorem.Ruin2} 
is not computable in exact form since the expression is given in terms of
various infinite series.  In practice, we can compute enough terms and then
truncate the series at a level where the error of truncation is smaller than
some desired precision. Since all terms involved are positive, such an 
approximation will provide an underestimate of the real ruin probability. 
In this section we compute error bounds for the approximation of the ruin probabilities 
occurred by truncating those series.

A close inspection of Theorems \ref{Theorem.Ruin1} and \ref{Theorem.Ruin2} reveals
that there will exist two sources of error due to truncation. The ruin probability can be seen
as the expected value of $\kappa_N$ where $N\sim\mbox{Poisson}(\xi u/s_1)$, so the 
first error  of truncation is $\Exp[\kappa_N|N\ge N_1]$,  we call $N_1$ the level of truncation for the ruin probability. Since the values of $\kappa_n$ are bounded
above by $1$, then it is possible to bound this error term with $\Prob(N\ge N_1)$ and
use Chernoff's bound \citep[cf. Theorem 9.3][]{Billingsley1995} to obtain an explicit expression
\begin{equation}\label{Chernoff}
 1-\zeta(N_1;\lambda)=\Prob(N>N_1)\le  \dfrac{\e^{-\lambda}(\e\cdot \lambda)^{N_1+1}}{(N_1+1)^{N_1+1}}.
\end{equation}
The second source of numerical error comes from truncating the infinite series 
induced by the scaling distribution $\Pi$; that is, we need to truncate
the series defining the terms $\mathcal{B}_i$, $\mathcal{C}_i$ and $\mathcal{D}_i$.
The following Lemma shows these truncated series  can be bounded by quantities
depending on the tail probability of $\Pi$ and the level of truncation $s_{N_2}$, where $N_2$ is the level of truncation for the scaling.

\begin{Lemma}\label{Lemma.Numerical}
Let $S\sim\Pi$ and define $\varepsilon_1=\Prob(S>s_{N_2})$ and $\varepsilon_2=\Exp[S;S>s_{N_2}]$. Then
\begin{align*} 
\mathcal B_i-\widetilde{\mathcal B}_i&\le\varepsilon_1,  & 0&\le i \, ,\\
\mathcal C_n-\widetilde{\mathcal C}_n&\le\varepsilon_1,  & \xi&\le n \, ,\\
\mathcal D_n-\widetilde{\mathcal D}_n&\le\varepsilon_2,  & \xi&\le n \, ,
\end{align*}
where $\widetilde{\mathcal{B}}_i$, $\widetilde{\mathcal{C}}_i$ and $\widetilde{\mathcal{D}}_i$
denote to the truncated series at $N_2$ terms.

\end{Lemma}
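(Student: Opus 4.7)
The plan is to observe that each of $\mathcal{B}_i$, $\mathcal{C}_n$, $\mathcal{D}_n$ is a series over the atoms $\{s_j\}$ of $\Pi$ whose truncation error is the tail sum starting at index $N_2+1$, and then to bound the summand uniformly so that the tail series collapses to either $\varepsilon_1=\Pr(S>s_{N_2})$ or $\varepsilon_2=\mathbb{E}[S;S>s_{N_2}]$. Concretely, I would write
\begin{equation*}
\mathcal{B}_i-\widetilde{\mathcal{B}}_i=\sum_{j=N_2+1}^{\infty}\frac{\pi_j s_1}{s_j}\mathrm{bin}(\xi-1;i,s_1/s_j),
\end{equation*}
and analogous expressions for the other two quantities.

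For $\mathcal{B}_i$, since $0<s_1\le s_j$ for all $j\ge 1$ we have $s_1/s_j\le 1$, and because $\mathrm{bin}(\xi-1;i,\cdot)\le 1$ the integrand factor $(s_1/s_j)\mathrm{bin}(\xi-1;i,s_1/s_j)$ is bounded by $1$. The tail sum is then dominated by $\sum_{j>N_2}\pi_j=\varepsilon_1$. For $\mathcal{C}_n$ the argument is identical: $\mathrm{Bin}(\xi-1;n,s_1/s_j)\le 1$, so the truncation error is bounded by $\sum_{j>N_2}\pi_j=\varepsilon_1$.

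The case of $\mathcal{D}_n$ is the only one that picks up the $s_j$-weight. Here one observes that
\begin{equation*}
\sum_{k=0}^{\xi-1}\frac{\xi-k}{\xi}\,\mathrm{bin}(k;n,s_1/s_j)\le\sum_{k=0}^{\xi-1}\mathrm{bin}(k;n,s_1/s_j)\le 1,
\end{equation*}
so the tail is dominated by $\sum_{j>N_2}\pi_j s_j=\mathbb{E}[S;S>s_{N_2}]=\varepsilon_2$. Assembling the three bounds yields the claimed inequalities.

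There is essentially no obstacle here: the whole lemma reduces to the trivial uniform bound of a binomial pmf/cdf by $1$, together with the fact that $s_1/s_j\le 1$, which turns each truncation error into the tail mass (or tail first moment) of $\Pi$. The only minor care needed is to isolate the $s_j$-weighted case for $\mathcal{D}_n$ so that the bound comes out as $\varepsilon_2$ rather than $\varepsilon_1$, which is why the factor $s_1/s_j$ present in $\mathcal{B}_i$ is essential to obtain the uniform $\varepsilon_1$-bound rather than an $\varepsilon_2$-type bound in that case.
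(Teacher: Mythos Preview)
Your proof is correct and follows essentially the same approach as the paper: write each truncation error as a tail sum over $j>N_2$ and bound the $j$-th summand by $\pi_j$ (respectively $\pi_j s_j$) using that binomial pmf/cdf values are at most $1$. The only cosmetic difference is that for $\mathcal{B}_i$ the paper first rewrites $\dfrac{s_1}{s_j}\,\mathrm{bin}(\xi-1;i,s_1/s_j)=\dfrac{\xi}{i+1}\,\mathrm{bin}(\xi;i+1,s_1/s_j)$ and then bounds by $1$ (using $i\ge\xi-1$), whereas you bound the two factors $s_1/s_j\le 1$ and $\mathrm{bin}(\xi-1;i,s_1/s_j)\le 1$ directly; your route is slightly more elementary and covers the case $i<\xi-1$ automatically since the pmf vanishes there.
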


\begin{proof}
 If $0\le i< \xi-1$ then $\mathcal B_i=\widetilde{\mathcal B}_i=0$, otherwise if $\xi-1\le i \le N_2$ then
 \begin{align*}
   \mathcal B_i-\widetilde{\mathcal B}_i=
   \frac{\xi}{i+1}\sum\limits_{j=N_2+1}^{\infty} {\pi_j}\mbox{{bin}}(\xi;i+1,s_1/s_j)
   &\le   \sum\limits_{j=N_2+1}^{\infty} \pi_j=\varepsilon_1.
 \end{align*}
 Similarly, if $n\ge \xi$ then
 \begin{align*}
 \mathcal C_n-\widetilde{\mathcal C}_n=
   \sum\limits_{j=N_2+1}^{\infty}\pi_j 
  \mbox{Bin}(\xi-1;n,s_1/s_j)
   & \le\sum\limits_{j=N_2+1}^{\infty}  \pi_j=\varepsilon_1,
 \end{align*}
 while
  \begin{align*}
 \mathcal D_n-\widetilde{\mathcal D}_n
  &=\sum\limits_{j=N_2+1}^{\infty}\pi_j s_j\sum\limits_{k=0}^{\xi-1} \frac{\xi-k}{\xi}\mbox{bin}(k;n,s_1/s_j)
  \le\sum\limits_{j=N_2+1}^{\infty}\pi_js_j=\varepsilon_2.
 \end{align*}
\end{proof}

\subsection{Truncation error for $\psi_{\Pi\star G_m}$}

We start by writing the expression for the ruin probability in Theorem \ref{Theorem.Ruin1} (approximation A)
as a truncated series

\begin{equation*}
\widetilde\psi_{\Pi\star G_m}(u)=\e^{-\xi u/s_1}\sum_{n=0}^{N_1}
  \widetilde\kappa_n\dfrac{(\xi u)^n}{s_1^nn!},
\end{equation*}
where
\begin{equation*}
\label{eq.Trun1}
\widetilde\kappa_n=\begin{cases}
 \gamma\mu_F,   &   0\le n \le \xi-1,\\[.5cm]
     \gamma\mu_F  \left[\sum\limits_{i=\xi-1}^{n-1}\widetilde\kappa_{n-1-i}
            \widetilde{\mathcal B}_{i}	+ \widetilde{\mathcal C}_n\right],&\xi \le n \le N_1,
\end{cases}
\end{equation*}
with
\begin{align*}
\widetilde{\mathcal B}_i & = \frac{\xi}{i+1}
  \sum\limits_{j=1}^{N_2}{\pi_j}\mbox{{bin}}(\xi;i+1,s_1/s_j),
 &
\widetilde{\mathcal C}_n & = 
   \sum\limits_{j=1}^{N_2}\pi_j \mbox{{Bin}}(\xi-1;n,s_1/s_j).
\end{align*}

\begin{theorem}\label{Numerical.thm1}
Let $\varepsilon_1=\Prob(S>s_{N_2})$.  Then
\begin{equation*}
\psi_{\Pi\star G_m}(u)-\widetilde\psi_{\Pi\star G_m}(u)\le 
      \varepsilon_1 \left[\frac{\gamma\mu_F}{1-\gamma\mu_F} \left(\frac{\xi u}{s_1}\right) + \frac{2}{(1-\gamma\mu_F)^2}\e^{-\frac{(1-\gamma\mu_F)\xi u}{s_1}} \right]
      +\left(1-\zeta(N_1;\xi u/s_1)\right),
\end{equation*}
where $\zeta(N_1;\xi u/s_1)$ denotes
the cdf of a Poisson with parameter $\xi u/s_1$ and evaluated
at $N_1$.
 
\end{theorem}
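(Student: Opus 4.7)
The plan is to split the overall truncation error into the outer Poisson-tail contribution (from truncating the $\kappa_n$-series at $N_1$) and the inner contribution (from truncating the series defining $\mathcal B_i,\mathcal C_n$ at $N_2$), and to bound each separately. Writing $\lambda := \xi u/s_1$ and $p_n := \lambda^n\e^{-\lambda}/n!$, the split is
$$\psi_{\Pi\star G_m}(u) - \widetilde\psi_{\Pi\star G_m}(u) = \sum_{n=0}^{N_1}(\kappa_n - \widetilde\kappa_n)p_n + \sum_{n>N_1}\kappa_n p_n.$$
For the tail I would first verify by induction that $\kappa_n \le \gamma\mu_F \le 1$. The key auxiliary identity is
$$\sum_{i=\xi-1}^{n-1}\mathcal B_i + \mathcal C_n = 1,$$
obtained by unpacking the definitions of $\mathcal B_i,\mathcal C_n$ and invoking the negative-binomial summation $\sum_{i\ge k}\binom{i}{k}p^k(1-p)^{i-k}=1/p$ with $p=s_1/s_j$, then aggregating against $\pi_j$. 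Feeding this identity back into the recursion of Theorem \ref{Theorem.Ruin1} gives $\kappa_n \le \gamma\mu_F\bigl[\gamma\mu_F + (1-\gamma\mu_F)\mathcal C_n\bigr] \le \gamma\mu_F$, so the Poisson tail is bounded by $\sum_{n>N_1}p_n = 1-\zeta(N_1;\lambda)$, yielding the last summand of the theorem.

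For the inner error, set $\Delta_n := \kappa_n - \widetilde\kappa_n \ge 0$ and decompose each product as
$$\kappa_{n-1-i}\mathcal B_i - \widetilde\kappa_{n-1-i}\widetilde{\mathcal B}_i = \kappa_{n-1-i}(\mathcal B_i - \widetilde{\mathcal B}_i) + \widetilde{\mathcal B}_i(\kappa_{n-1-i} - \widetilde\kappa_{n-1-i}).$$
Combining this with the pointwise bounds $\mathcal B_i - \widetilde{\mathcal B}_i \le \varepsilon_1$ and $\mathcal C_n - \widetilde{\mathcal C}_n \le \varepsilon_1$ from Lemma \ref{Lemma.Numerical}, together with $\kappa_{n-1-i}\le 1$ and $\widetilde{\mathcal B}_i\le\mathcal B_i$, produces the renewal-type inequality
$$\Delta_n \le \gamma\mu_F\bigg[\sum_{i=\xi-1}^{n-1}\mathcal B_i\,\Delta_{n-1-i} + \varepsilon_1(n-\xi+2)\bigg],\qquad n\ge\xi,$$
with $\Delta_n = 0$ for $n<\xi$.

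To convert this into the announced bound on $\sum_{n=0}^{N_1}\Delta_n p_n$ I would pass to the ordinary generating function $\hat\Delta(z) := \sum_n\Delta_n z^n$, which from the inequality above satisfies
$$\hat\Delta(z)\bigl(1-\gamma\mu_F\,z\,B(z)\bigr) \le \gamma\mu_F\,\varepsilon_1\frac{z^\xi(2-z)}{(1-z)^2},\qquad B(z):=\sum_i\mathcal B_i z^i.$$
Since $B$ is a probability generating function, $B(z)\le 1$ on $[0,1]$, hence $\hat\Delta(z) \le \gamma\mu_F\,\varepsilon_1\,z^\xi(2-z)/[(1-z)^2(1-\gamma\mu_F z)]$. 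Partial-fractioning $(2-z)/[(1-z)^2(1-\gamma\mu_F z)]$ into components of type $1/(1-z)^2$, $1/(1-z)$ and $1/(1-\gamma\mu_F z)$ and then applying the three Poisson identities
$$\sum_n(n+1)p_n = \lambda+1,\qquad \sum_n p_n = 1,\qquad \sum_n(\gamma\mu_F)^n p_n = \e^{-(1-\gamma\mu_F)\lambda},$$
produces a linear combination that, after dropping the subdominant signed residuals, collapses to exactly $\varepsilon_1\bigl[\gamma\mu_F\lambda/(1-\gamma\mu_F) + 2\e^{-(1-\gamma\mu_F)\lambda}/(1-\gamma\mu_F)^2\bigr]$. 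The main obstacle will be the careful bookkeeping of the partial-fraction coefficients together with the leftward index shift produced by the $z^\xi$ factor, verifying that the leftover constants have the correct sign to be absorbed into the announced clean form. Adding back the Poisson-tail bound $1-\zeta(N_1;\lambda)$ then completes the proof.
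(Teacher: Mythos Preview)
Your overall architecture is sound and matches the paper: split off the Poisson tail $\sum_{n>N_1}\kappa_n p_n$, bound it by $1-\zeta(N_1;\lambda)$ using $\kappa_n\le 1$, and handle the inner error via a renewal-type inequality for $\Delta_n=\kappa_n-\widetilde\kappa_n$. Your auxiliary identity $\sum_{i=\xi-1}^{n-1}\mathcal B_i+\mathcal C_n=1$ is correct (and a nicer way to see $\kappa_n\le\gamma\mu_F$ than the paper's bare assertion), and your renewal inequality $\Delta_n\le\gamma\mu_F\bigl[\sum_i\mathcal B_i\Delta_{n-1-i}+\varepsilon_1(n-\xi+2)\bigr]$ coincides with the paper's.

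The gap is in your generating-function step. From the recursion you correctly obtain the \emph{pointwise} inequality $\hat\Delta(z)(1-\gamma\mu_F\,zB(z))\le\gamma\mu_F\varepsilon_1 z^\xi(2-z)/(1-z)^2$ on $[0,1)$, and then use $B(z)\le 1$ to upgrade to $\hat\Delta(z)\le h(z):=\gamma\mu_F\varepsilon_1 z^\xi(2-z)/[(1-z)^2(1-\gamma\mu_F z)]$. But you then partial-fraction $h$, read off its power-series coefficients, and apply the Poisson identities as if $\Delta_n\le[z^n]h(z)$. That coefficient-wise domination does \emph{not} follow from the pointwise bound: for instance, with $B(z)=z$ one has $[z^n]1/(1-\rho z^2)=\rho^{n/2}>\rho^n=[z^n]1/(1-\rho z)$ for even $n$, so replacing $B(z)$ by $1$ can increase coefficients. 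Since $\sum_n\Delta_n p_n$ is an EGF-type average, not an OGF evaluation, you cannot pass from $\hat\Delta(z)\le h(z)$ to the Poisson sum without the coefficient comparison. The ``bookkeeping of signs'' you anticipate as the main obstacle is secondary to this missing step.

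The paper avoids the issue entirely: it bounds the convolution crudely by $\sum_i\mathcal B_i\Delta_{n-1-i}\le(\sup_{i<n}\Delta_i)\sum_i\mathcal B_i=\sup_{i<n}\Delta_i$, yielding the scalar recursion $\Delta_n\le\gamma\mu_F[\sup_{i<n}\Delta_i+(n-\xi+2)\varepsilon_1]$, which it solves by induction to get the explicit coefficient bound $\Delta_n\le\varepsilon_1\bigl[\tfrac{\gamma\mu_F}{1-\gamma\mu_F}n+\tfrac{2}{(1-\gamma\mu_F)^2}(\gamma\mu_F)^n\bigr]$; the Poisson sum is then immediate. Your OGF route can be repaired---for example, by introducing the majorant $\Delta_n^{**}$ satisfying $\Delta_n^{**}=\gamma\mu_F[\Delta_{n-1}^{**}+\varepsilon_1(n-\xi+2)]$ and observing that $\Delta_n^{**}$ is nondecreasing, whence $\sum_i\mathcal B_i\Delta_{n-1-i}^{**}\le\Delta_{n-1}^{**}$ gives $\Delta_n\le\Delta_n^{**}=[z^n]h(z)$ by induction---but that monotonicity argument is the substantive point, not the inequality $B(z)\le 1$.
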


\begin{proof}
 Observe that
 \begin{equation}\label{Numerical.Bound1}
    \psi_{\Pi\star G_m}(u)-\widetilde\psi_{\Pi\star G_m}(u)=
    \e^{-\xi u/s_1}\sum_{n=0}^{N_1}
      (\kappa_n-\widetilde\kappa_n)\dfrac{(\xi u)^n}{s_1^nn!}
      +\e^{-\xi u/s_1}\sum_{n=N_1+1}^{\infty}
      \kappa_n\dfrac{(\xi u)^n}{s_1^nn!}.
 \end{equation}
Firstly we consider the second term in the right hand side of \eqref{Numerical.Bound1}.
Using that $\kappa_n\le1$ we obtain that if $N_1>{\xi u}/{s_1}-1$, then
\begin{align*}
 \e^{-\xi u/s_1}\sum_{n=N_1+1}^{\infty} \kappa_n\dfrac{(\xi u)^n}{s_1^nn!}
   &\le\sum_{n=N_1+1}^{\infty}\e^{-\xi u/s_1}\dfrac{(\xi u)^n}{s_1^nn!}
   =\left(1-\zeta(N_1;\xi u/s_1)\right).
\end{align*}
Next we look into the first term of equation \eqref{Numerical.Bound1} and observe that
 \begin{equation*}\label{kappa.error}
\kappa_n-\widetilde\kappa_n=\begin{cases}
 0,   &   0 \le n \le \xi-1,\\[.5cm]
      \gamma\mu_F \left[\sum\limits_{i=\xi-1}^{n-1} \left(\kappa_{n-1-i}
            \mathcal B_{i}-\widetilde\kappa_{n-1-i}\widetilde{\mathcal B}_{i} \right)	+ \mathcal C_n-\widetilde{\mathcal{C}}_n\right],&\xi\le n\le N_1.
\end{cases}
\end{equation*}
Notice that if $n\ge \xi$ we can rewrite
\begin{align*}
  \sum\limits_{i=\xi-1}^{n-1} \left(\kappa_{n-1-i}
            \mathcal B_{i}-\widetilde\kappa_{n-1-i}\widetilde{\mathcal B}_{i} \right)
          =&  \sum\limits_{i=\xi-1}^{n-1} \left( (\kappa_{n-1-i}-\widetilde\kappa_{n-1-i})\mathcal B_i
     + \widetilde \kappa_{n-1-i}(\mathcal B_i-\widetilde{\mathcal{B}}_i) \right).
\end{align*}
Since $0<\widetilde\kappa_i\le\kappa_i\le 1$
for $0\le i$ then we can use the first part of Lemma
\ref{Lemma.Numerical} to obtain the following bound of the expression above
\begin{equation}\label{Big.Kappas}
  \sum\limits_{i=\xi-1}^{n-1}
     (\kappa_{i}-\widetilde\kappa_{i})\mathcal{B}_i+(n-\xi+1)\varepsilon_1.
\end{equation}
Putting 
\eqref{Big.Kappas} and the second part of Lemma
\ref{Lemma.Numerical} together we arrive at
\begin{align*}
\kappa_{n}-\widetilde\kappa_{n}\le \gamma\mu_F\left[\sum_{i=\xi-1}^{n-1}(\kappa_i-\widetilde\kappa_i)\mathcal{B}_i+(n-\xi+1)\varepsilon_1+\varepsilon_1
\right] 
&\le  \gamma\mu_F\left[\sup_{\xi-1\le i< n-1}(\kappa_i-\widetilde\kappa_i)\sum_{i=\xi}^{\infty}\mathcal{B}_i+(n-\xi+2)\varepsilon_1\right]\\
&\le  \gamma\mu_F\left[\sup_{\xi-1\le i< n-1}(\kappa_{n-1}-\widetilde\kappa_{n-1})+(n-\xi+2)\varepsilon_1\right].\\
\end{align*}
Note that $\sum_{i=\xi}^\infty \mathcal{B}_i=1$ follows from relating the formula of $\mathcal{B}_i$ 
to the probability mass function of a negative binomial distribution $\mbox{NP}(\xi,1-s_1/s_j)$.
Using the hypothesis that $\gamma\mu_F<1$ and induction it is not difficult to prove that
\begin{align*}
\kappa_n-\widetilde\kappa_n &\le \varepsilon_1 \sum_{i=2}^{n-\xi+2} i (\gamma\mu_F)^{n-\xi+3-i}
\le \varepsilon_1 \left[\frac{\gamma\mu_F}{1-\gamma\mu_F}n+\frac{2}{(1-\gamma\mu_F)^2}(\gamma\mu_F)^n \right].
\end{align*}
Inserting the bound above into the first term of equation \eqref{Numerical.Bound1}
and assuming that $\xi u/s_1>1$ we arrive at 
\begin{align*}
&\e^{-\xi u/s_1}\sum_{n=0}^{N_1}
      (\kappa_n-\widetilde\kappa_n)\dfrac{(\xi u)^n}{s_1^nn!} \le 
      \varepsilon_1 \left[\frac{\gamma\mu_F}{1-\gamma\mu_F} \left(\frac{\xi u}{s_1}\right) + \frac{2}{(1-\gamma\mu_F)^2}\e^{-\frac{(1-\gamma\mu_F)\xi u}{s_1}} \right] .
\end{align*}

%
\end{proof}

\begin{Remark}
 The term $\left(1-\zeta(N_1;\xi u/s_1)\right)$ can be bounded using Chernoff's bound
\begin{align*}
1-\zeta(N_1;\xi u/s_1)
   \le\dfrac{\e^{-\xi u/s_1}(\e^1\cdot \xi\cdot u/s_1)^{N_1+1}}{(N_1+1)^{N_1+1}}.
\end{align*}

\end{Remark}
\subsection{Truncation error for $\psi_{H_\Pi\star \widehat{G}_m}$}

{We write the ruin probability in Theorem \ref{Theorem.Ruin2} (approximation B) as a truncated series: 
\begin{equation*}
\widetilde\psi_{H_\Pi\star \widehat G_m}(u)=\sum_{n=0}^{N_1}\widetilde\kappa_n\dfrac{(\xi u/s_1)^n\e^{-\xi u/s_1}}{n!},
\end{equation*}
where
\begin{equation*}
\widetilde\kappa_n=\begin{cases}
 \gamma{\mu_F},   &   n=0,\\[.5cm]
    (\gamma{\mu_F}-1)\left(1+\dfrac{\gamma {\mu_F} s_1 }{{  {\mu_{\Pi}}} \xi}\right)^n+1,  &   1 \leq n \leq \xi,\\[.5cm]
     \dfrac{ \gamma {\mu_F} s_1}{{  {\mu_{\Pi}}} \xi }\sum\limits_{i=0}^{n-1}\widetilde\kappa_{n-1-i}
            \widetilde{\mathcal{C}}_{i}	+  \dfrac{ \gamma {\mu_F} }{{\mu_{\Pi}}}
            \widetilde{\mathcal{D}}_{n},&\xi< n.
\end{cases}
\end{equation*}
with
\begin{align*}
 \widetilde{\mathcal C}_{i}&=\sum\limits_{j=1}^{N_2}\pi_j\mbox{{Bin}}(\xi-1;i,s_1/s_j),
 &\widetilde{\mathcal D}_{n}&=\sum\limits_{j=1}^{N_2}\ \pi_j s_j\sum\limits_{k=0}^{\xi-1}\dfrac{\xi-k}{\xi}\mbox{{bin}}(k;n,s_1/s_j).
\end{align*}
}

The result and its proof are similar to the previous case.
 
\begin{theorem}
\label{Numerical thm2}
Let $S\sim\Pi$ and define  $\varepsilon_2=\Exp[S;S>s_{N_2}]$. Then
\begin{equation*}
\psi_{H_\Pi\star \widehat G_m}(u)-\widetilde\psi_{H_\Pi\star \widehat G_m}(u)
\le\varepsilon_2\e^{\frac{\xi u\gamma\mu_F}{s_1\mu_{\Pi}}}\left(\dfrac{\gamma\mu_F}{\mu_{\Pi}}+1\right)^{-\xi}
+\zeta(N_1;\xi u/s_1).
\end{equation*}

\end{theorem}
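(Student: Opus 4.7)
The plan is to mirror the proof of Theorem \ref{Numerical.thm1}. First, I split the truncation error as
\begin{align*}
\psi_{H_\Pi\star\widehat G_m}(u) - \widetilde\psi_{H_\Pi\star\widehat G_m}(u)
&= \e^{-\xi u/s_1}\sum_{n=0}^{N_1}(\kappa_n-\widetilde\kappa_n)\frac{(\xi u/s_1)^n}{n!}\\
&\quad + \e^{-\xi u/s_1}\sum_{n=N_1+1}^{\infty}\kappa_n\frac{(\xi u/s_1)^n}{n!},
\end{align*}
separating the scale-truncation error (first sum) from the series-truncation error (second sum). For the second sum, once a uniform upper envelope for $\kappa_n$ is established (derivable by combining the closed form for $1\le n\le\xi$ in Theorem \ref{Theorem.Ruin2} with an inductive bound on the recursion for $n>\xi$), the tail collapses to a Poisson-type tail probability, yielding the $\zeta(N_1;\xi u/s_1)$-related contribution via Chernoff's bound as in the Remark following Theorem \ref{Numerical.thm1}.

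For the first sum, I observe that the closed form for $\kappa_n$ with $0\le n\le\xi$ in Theorem \ref{Theorem.Ruin2} depends only on $\gamma,\mu_F,s_1,\mu_\Pi,\xi$, none of which is truncated, and hence $\kappa_n=\widetilde\kappa_n$ on that range. For $n>\xi$, I would invoke Lemma \ref{Lemma.Numerical} to obtain $\mathcal C_i-\widetilde{\mathcal C}_i\le\varepsilon_1$ and $\mathcal D_n-\widetilde{\mathcal D}_n\le\varepsilon_2$, and then exploit the decomposition
$$
\kappa_{n-1-i}\mathcal C_i - \widetilde\kappa_{n-1-i}\widetilde{\mathcal C}_i
= (\kappa_{n-1-i}-\widetilde\kappa_{n-1-i})\mathcal C_i
+ \widetilde\kappa_{n-1-i}(\mathcal C_i-\widetilde{\mathcal C}_i)
$$
to set up a linear recursion for $\kappa_n - \widetilde\kappa_n$ with forcing proportional to $\varepsilon_2$. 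Solving this recursion yields a geometric growth which, summed against the Poisson weights, collapses via the moment-generating-function identity
$$
\sum_{n=0}^{\infty}(1+\gamma\mu_F/\mu_\Pi)^n\,\frac{(\xi u/s_1)^n}{n!}\,\e^{-\xi u/s_1} = \e^{\xi u\gamma\mu_F/(s_1\mu_\Pi)}
$$
to the exponential factor in the statement; the normalizing factor $(1+\gamma\mu_F/\mu_\Pi)^{-\xi}$ then reflects the fact that the recursion only activates past index $\xi$, shifting the geometric growth by $\xi$ terms.

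The main technical obstacle is the recursion for $\kappa_n-\widetilde\kappa_n$: unlike approximation A, the coefficients $\kappa_n$ in approximation B are not uniformly bounded by $1$ and may oscillate in sign, so a crude uniform bound is unavailable both for the tail piece and for the inductive step controlling the scale error. The delicate point is to pin the effective geometric rate of $\kappa_n-\widetilde\kappa_n$ at exactly $1+\gamma\mu_F/\mu_\Pi$, so that the generating-function identity above delivers the clean exponential factor in the statement rather than a looser envelope; aligning the bookkeeping of the $\varepsilon_1$ and $\varepsilon_2$ contributions so that only $\varepsilon_2$ survives in the final bound is the main place where the argument is technically intricate.
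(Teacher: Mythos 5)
Your outline follows the paper's own route step by step: the same split of the error into $\e^{-\xi u/s_1}\sum_{n\le N_1}(\kappa_n-\widetilde\kappa_n)(\xi u/s_1)^n/n!$ plus the Poisson-weighted tail, the observation that $\kappa_n=\widetilde\kappa_n$ for $n\le\xi$, the decomposition $\kappa_{n-1-i}\mathcal C_i-\widetilde\kappa_{n-1-i}\widetilde{\mathcal C}_i=(\kappa_{n-1-i}-\widetilde\kappa_{n-1-i})\mathcal C_i+\widetilde\kappa_{n-1-i}(\mathcal C_i-\widetilde{\mathcal C}_i)$ combined with Lemma \ref{Lemma.Numerical}, a linear recursion for $\kappa_n-\widetilde\kappa_n$ with forcing of order $\varepsilon_2$ solved to a geometric envelope with rate $1+\gamma\mu_F/\mu_\Pi$ shifted by $\xi$, and finally the Poisson moment-generating identity producing $\varepsilon_2\,\e^{\xi u\gamma\mu_F/(s_1\mu_\Pi)}(1+\gamma\mu_F/\mu_\Pi)^{-\xi}$. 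Up to the bookkeeping (the paper simply absorbs the $\varepsilon_1$ contribution from $\mathcal C_i-\widetilde{\mathcal C}_i$ into $\varepsilon_2$), this is exactly the published argument.

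The genuine gap is your handling of the boundedness of the coefficients. You assert that in approximation B the $\kappa_n$ ``are not uniformly bounded by $1$ and may oscillate in sign'' and leave the needed envelope as an unresolved obstacle, suggesting it could be obtained ``by an inductive bound on the recursion.'' Both claims are wrong in a way that matters: the paper's proof uses $0<\widetilde\kappa_n\le\kappa_n\le 1$ together with $\mathcal C_i\le 1$ twice --- to reduce the tail $\sum_{n>N_1}\kappa_n$ to the Poisson tail (the $\zeta$-term in the statement is to be read as the tail $1-\zeta(N_1;\xi u/s_1)$, as in Theorem \ref{Numerical.thm1}; Chernoff is only an optional further bound), and inside the recursive step to control $(\kappa_{n-1-i}-\widetilde\kappa_{n-1-i})\mathcal C_i$ and $\widetilde\kappa_{n-1-i}(\mathcal C_i-\widetilde{\mathcal C}_i)$. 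This uniform bound cannot be extracted from the scalar recursion of Theorem \ref{Theorem.Ruin2}: induction there only yields an envelope growing like $(1+\gamma\mu_F s_1/(\mu_\Pi\xi))^n$, which would not collapse the tail to a Poisson tail and would spoil the stated bound. The correct justification is the uniformization structure behind Proposition \ref{renewal.Integrated}: $\kappa_n=\rho\bm{\alpha}\bigl(\bm{I}_{\infty}+(\bm{T}+\rho\bm{t\alpha})s_1/\theta\bigr)^n\bm{e}_{\infty}$ with a nonnegative sub-stochastic matrix, so $\kappa_n\in(0,\rho]\subset(0,1]$, and $\widetilde\kappa_n\le\kappa_n$ because truncation removes only nonnegative terms. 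Once this is in place, the recursion and moment-generating-function computation you describe go through exactly as in the paper; without it, both the tail piece and the inductive step of your proposal are unsupported.
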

Building a bound for the numerical error of approximation B is more involved than
for approximation A given in Theorem \ref{Numerical.Bound1}.  The reason is that it is not simple to provide
a tight bound for the $\sum_{i=1}^\infty\mathcal{C}_i$ as for $\sum_{i=1}^\infty\mathcal{B}_i$. 
Notice that the bound is not as tight as in the case of Theorem \ref{Numerical.Bound1} and may not be of much practical use.
This aspect highlights an additional advantage of our first estimator.  

\begin{proof}
 Observe that
 \begin{equation}\label{kappa.error2}
\kappa_n-\widetilde\kappa_n=\begin{cases}
 0,   &   0\le n\le \xi,\\[.5cm]
     \dfrac{ \gamma\mu_F }{\mu_{\Pi}}\left[\dfrac{s_1}{\xi}\sum\limits_{i=\xi+1}^{n-1} \left(\kappa_{n-1-i}
           \mathcal C_i-\widetilde\kappa_{n-1-i}\widetilde{\mathcal C}_i \right)
            +\mathcal D_n-\widetilde{\mathcal D}_n\right],&\xi< n\le N_1.
\end{cases}
\end{equation}
The summation in  \eqref{kappa.error2} can be rewritten as
\begin{align*}
  \sum\limits_{i=\xi+1}^{n-1} \left(\kappa_{n-1-i}
           \mathcal C_i-\widetilde\kappa_{n-1-i}\widetilde{\mathcal C}_i \right)
            &=  \sum\limits_{i=\xi+1}^{n-1} \left( \left(\kappa_{n-1-i}-\widetilde\kappa_{n-1-i}\right)\mathcal C_i
     + \widetilde \kappa_{n-1-i}(\mathcal C_i-\widetilde{\mathcal C}_i) \right).
\end{align*}
Since $0<\widetilde\kappa_i\le\kappa_i\le 1$
and $\mathcal C_i\le 1$  for $0\le i$ then we can use the second part of Lemma
\ref{Lemma.Numerical} to obtain the following bound of the expression above
\begin{equation}\label{Big.Kappas2}
  \sum\limits_{i=\xi+1}^{n-1}
     \left(\kappa_{i}-\widetilde\kappa_{i}\right)+(n-\xi-1)\varepsilon_2.
\end{equation}
Putting  \eqref{Big.Kappas2} and the third part of Lemma
\ref{Lemma.Numerical} together we arrive at
\begin{align*}
   \kappa_{n}-\widetilde\kappa_{n}
   &\le\dfrac{\gamma\mu_F}{\mu_{\Pi}}\left(\frac{s_1}{\xi}\sum\limits_{i=\xi+1}\limits^{n-1}(\kappa_i-\widetilde \kappa_i)+(n-\xi)\varepsilon_2\right)
   \le\dfrac{\gamma\mu_F}{\mu_{\Pi}}\left(\sum\limits_{i=\xi+1}\limits^{n-1}(\kappa_i-\widetilde \kappa_i)+(n-\xi)\varepsilon_2\right).
\end{align*}
Induction yields that
\begin{align*}
\dfrac{\gamma\mu_F}{\mu_{\Pi}}\left(\sum\limits_{i=\xi+1}\limits^{n-1}(\kappa_i-\widetilde \kappa_i)+(n-\xi)\varepsilon_2\right)
 &= \varepsilon_2\left(\left(\dfrac{\gamma\mu_F}{\mu_{\Pi}}+1\right)^{n-\xi}-1\right)\\
 &\le \varepsilon_2\left(\dfrac{\gamma\mu_F}{\mu_{\Pi}}+1\right)^{n-\xi}.
\end{align*}
Hence we arrive at 
\begin{equation*}
\e^{-\xi u/s_1}\sum_{n=0}^{N_1}
      (\kappa_n-\widetilde\kappa_n)\dfrac{(\xi u)^n}{s_1^nn!} \le \varepsilon_2\e^{\frac{\xi u\gamma\mu_F}{s_1\mu_{\Pi}}}\left(\dfrac{\gamma\mu_F}{\mu_{\Pi}}+1\right)^{-\xi}.
\end{equation*}
 
\end{proof}

\section{Numerical implementations}
\label{sub.Implementation}
We briefly discuss some relevant aspects of the implementation
of Theorems \ref{Theorem.Ruin1} and \ref{Theorem.Ruin2}.

Suppose we want to approximate a distribution $\widehat F$ via Erlangized scale mixtures. 
The selection of the parameter $\xi\in\nat$ of the Erlang distribution 
boils down to selecting a value $\xi$ large enough so the bound provided in
Theorem \ref{Erlagization.Bound.A} and Theorem \ref{First error bound} is smaller than 
a preselected precision.  It is however not recommended to select a value which is too large since
this will require truncating at higher levels and thus resulting in a much slower algorithm 
(this will be further discussed below). 

The most critical aspect for an efficient implementation is the selection of 
an appropriate approximating distribution $\Pi$.
The selection can be made rather arbitrary but we suggest 
the following general family of discrete distributions:
\begin{definition}\label{Def: step distribution}
Let $W=\{w_i:i\in\mathbb{Z}^+\}$ and $\Omega_\Pi=\{s_i:i\in\mathbb{Z}^+\}$ be sets of strictly increasing nonnegative values such that
$w_0=s_0=\inf\{s:F(s)>0\}$ and for all $k\in\mathbb{N}$ it holds that
\begin{equation*}
 s_{k}\le w_k\le s_{k+1}.
\end{equation*}
Then we define the distribution $\Pi$ as 
\begin{equation*}
 \Pi(s):=\sum_{k=0}^\infty F(w_{k})\Ind_{[s_{k},s_{k+1})}(s).
\end{equation*} 
\end{definition}

The distribution $\Pi$ is a discretized approximating distribution which is upcrossed by $F$ in every interval $(w_{k-1},w_k)$.
This type of approximation is rather general as we can consider general approximations by selecting $s_k\in(w_{k-1},w_k)$,
approximations from \emph{below} by setting $s_k=w_{k}$,
approximations from \emph{above} by setting $s_k=w_{k-1}$, or the middle point (see Figure~\ref{Fig.Discretization}). 
Heuristically, one might expect to reduce the error of approximation by selecting the middle point
(this was our selection in our experimentations below).
\begin{figure}[h]
 \caption{Four alternative discretizations of a Pareto(2) distribution. Panel (a) shows a general approximation.
 Panels (b) and (c) shows approximations from above and below respectively, while in the last panel (d)
 we have selected the middle point.}
 \label{Fig.Discretization}
 \includegraphics[scale=0.45]{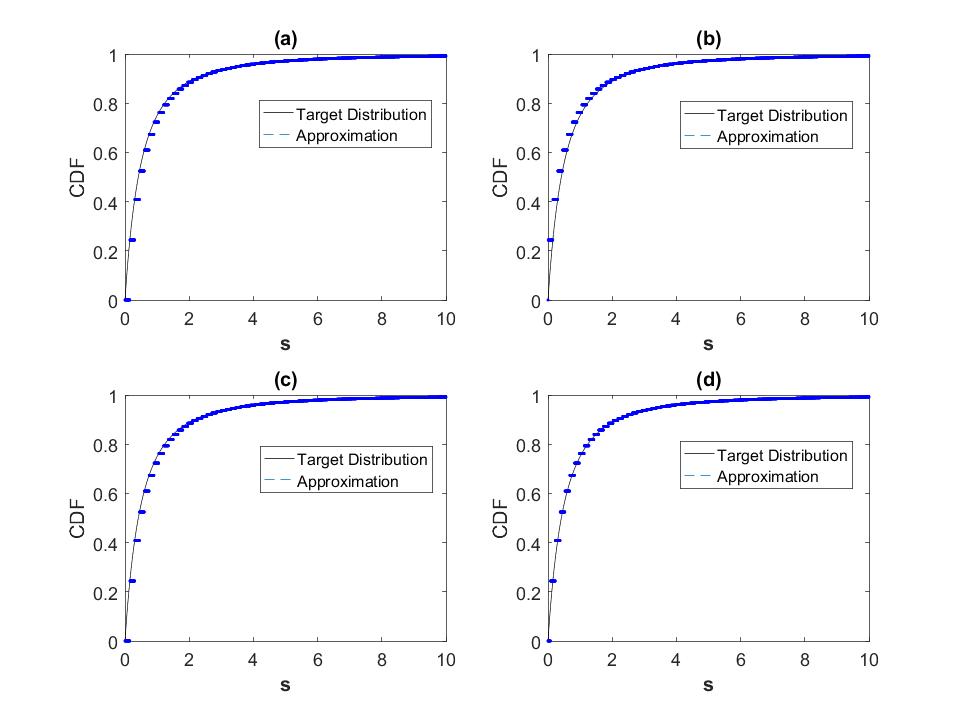}
\end{figure}

%

In practice we can just  compute a finite number of terms $\pi_k$, so we end up with an improper distribution. 
This represents a serious issue because truncating at lower levels affects the quality of the
approximation in the tail regions.  Computing a larger number of terms is not often
an efficient alternative since the computational times become rapidly unfeasible. 
Thus, our ultimate goal will be to select among the partitions of certain fixed
size (we restrict the partition size since we assume we have a limited computational budget), 
the one that \emph{minimizes} the distance $|F-\Pi|$, in particular in the tails. 
In our numerical experimentations we found that an arithmetic progression required
a prohibitively large number of terms to obtain sharp approximations in the tail.  We obtained
better results using geometric progressions as these can provide better approximations
with a reduced number of terms. Moreover, since the sequence determining the probability mass 
function converges faster to $0$, then it is easier to compute enough terms so 
for practical purposes it is equivalent to work with a proper distribution.

The speed of the algorithm is heavily determined by the total number of 
terms  of the infinite series in Theorems \ref{Theorem.Ruin1} and
\ref{Theorem.Ruin2} computed.   
Since the probability of interest can be seen as the expected
value $\Exp[\kappa_N]$ where $N\sim\mbox{Poisson}(\xi u/s_1)$, 
it is straightforward to see that the total
number $N_1$ of terms needed to provide an accurate approximation 
is directly related to the value
$\xi u/s_1$. 
Thus, large values of $\xi$ and $u$ combined with small values of $s_1$ 
will require longer computational times.  
Since smaller values of $\xi$ and 
larger values of $s_1$ will typically result in increased errors of approximation, there
will be a natural trade-off between speed and precision in the selection of these values.
In our numerical experiments below we have 	selected empirically these values
with the help of the error bounds found in the previous sections.

It is also worth noting that  the calculation of the value $\kappa_n$ for $n>\xi$ in both Theorems 
\ref{Theorem.Ruin1} and \ref{Theorem.Ruin2} requires the evaluation of 
the binomial probability mass functions
$\mbox{bin}(\cdot;i,\cdot)$ for all $i=\xi,\dots,N_1$.
While the computation of such probabilities is relatively simple, it is not particularly
efficient to compute each term separately because the computational times
become very slow as $n$ goes to infinity. Due to the recursive nature of the
coefficients $\kappa_n$ one may incur in significant numerical errors if the 
the binomial probabilities are not calculated at a high precision. For more details, see for instance
\cite{Loader2000} for recommended strategies that can be used to increase the speed
and accuracy of the binomial probabilities.    

Finally, we remark that 
the speed of the implementation can be significantly improved by using parallel computing.
In our implementations below we have broken the series into smaller pieces and we have sent this to 
an HPC (high performance computing) facility to run independent units of work.

\subsection{Numerical examples}

In this section, we show the accuracy of our approximation A through the following 
Pareto example. In such an example, the claim sizes are Pareto distributed, so their 
integrated tails are regularly varying. 
The exact values of the ruin probability are given in \cite{Ramsay2003solution}, 
and are now considered a classical benchmark for comparison purposes.  We have limited our
numerical experiments to the Pareto with parameter 2 and net profit condition close to 0  ($\rho\to1$) as this 
is one of the most challenging ruin probabilities we could find for which there are results
available for comparison.
\begin{Example}[Pareto claim sizes]
We consider a Cram\'er--Lundberg model with unit premium rate, and claim sizes distributed  
according to a Pareto distribution with a single parameter $\phi>1$ with support on the positive real axis, mean $1$ 
and having the following cumulative distribution function
\begin{equation}\label{Pareto}
F(x) = 1 - \left(1+\frac{x}{\phi-1}\right)^{-\phi}, \; \text{for} \; x > 0 \; \text{and} \; \phi >1,
\end{equation}
(other parametrizations of the Pareto distribution are common as well).
The integrated tail of the above distribution is regularly varying with parameter $\phi-1$:
\begin{align*}
\widehat{F}(x) &= \frac{1}{\mu_{F}} \int_0^x \overline{F}(t) \,\text{d} t 
       = \int_0^x \left(1+\frac{t}{\phi-1}\right)^{-\phi} \,\text{d} t 
       = 1-\left(1+\frac{x}{\phi-1}\right)^{-(\phi-1)}.
\end{align*}
The parameters of the risk model selected were $\rho=0.95$, $\phi=2$.
We implemented the approximation A in Theorem~\ref{Theorem.Ruin1} and its analysis is presented next.
For comparisons purposes we also included the approximation B in Theorem~\ref{Theorem.Ruin2}, but overall
we found that it is less accurate, much slower and more difficult to analyze since its bounds are not tight enough.

First we analyzed the Erlangization error for approximation A.  For this example, it is possible to compute the bound given by 
Theorem \ref{Th.Vatamidou} for values of $\xi=100,500,1000$. The bound appears to be tighter
for smaller values of $\rho$ while it gets loosen as long as the value of $\rho\to1$.  The bound also 
increases as $u\to\infty$, so probabilities of ruin with large initial reserves will be more difficult
to approximate.  The bound appears to decrease proportionally in $\xi$ but in practice, we didn't noticed
significant changes in the numerical approximation of the probability of ruin for values of $\xi$ larger than $100$.
Nevertheless, since larger values of $\xi$ affect the speed of the algorithm 
we settled with a value of $\xi=100$ which already gave good results overall.
\begin{table}[htb]
	\centering
	\caption{Erlagization Error Bounds.}\label{ErrorBound1}
	\begin{tabular} {p{1.5cm}|p{2.5cm}|p{2.5cm}|p{2.5cm}|p{2.5cm}}  
	\hline  \hline 	
	$u$ &$\xi$=100  & $\xi$=500 & $\xi$=1000 \\
	\hline  \hline 	
	1     &  $2.5736 \times 10^{-4}$ & $5.1724 \times 10^{-5}$ & $2.5856 \times 10^{-5}$  \\
	5     &  $1.6324 \times 10^{-3}$ & $3.2839 \times 10^{-4}$ & $1.6418 \times 10^{-4}$  \\
	10    &  $3.8081 \times 10^{-3}$ & $7.6641 \times 10^{-4}$ & $3.8319 \times 10^{-4}$  \\
	30    &  $1.0884 \times 10^{-2}$ & $2.1911 \times 10^{-3}$ & $1.5128 \times 10^{-3}$  \\
	50    &  $1.5028 \times 10^{-2}$ & $3.0257 \times 10^{-3}$ & $2.6455 \times 10^{-3}$  \\
	100   &  $2.0055 \times 10^{-2}$ & $4.0379 \times 10^{-3}$ & $2.0190 \times 10^{-3}$  \\
	500   &  $2.6279 \times 10^{-2}$ & $5.2911 \times 10^{-3}$ & $2.6455 \times 10^{-3}$  \\
	1000  &  $2.7265 \times 10^{-2}$ & $5.4896 \times 10^{-3}$ & $2.7448 \times 10^{-3}$  \\
	\hline \hline 	
	\end{tabular}
\end{table}
	
Next we constructed a discrete approximating distribution $\Pi$ by considering a discretized Pareto  
supported over the geometric progression $\{\e^{t_0}, \e^{t_1}, \e^{t_2}, \cdots\}$, where $t_k=t_0+k/K$.
It is rather clear that a finer partition of the interval
$[0,\infty)$ would yield a better approximation and this would be attained by letting the value of $t_0\to-\infty$
and $K\to\infty$.  However, small values of $s_1:=\e^{t_0}$ affect severely the speed of the algorithm (see the discussion
above) while in practice not much precision is gained by taking it too close to $0$.  A similar
trade-off in speed and precision occurs by letting $K\to\infty$. For this example we have selected these values empirically with 
the help of the bound in Theorem \ref{Theorem.error.discret.I} and Proposition \ref{Prop.error.discrete.I}.
We settled with $t_0=-3$ and $K=270$ for all the examples.
The results are in the first column in the Table \ref{ErrorBound2} below.

\begin{table}[htb]
	\centering
	\caption{Error Bounds}\label{ErrorBound2}
	\begin{tabular} {p{1.5cm}|p{3.5cm}|p{3.5cm}}  
	\hline  \hline 	
	$u$ & Discretization Error  &  Truncation  Error $N_2$   \Tstrut\Bstrut\\
	\hline  \hline 	
	1     &  $6.2002 \times 10^{-4}$ &  $3.6522 \times 10^{-9}$\\
	5     &  $6.3557 \times 10^{-5}$ &  $1.8261 \times 10^{-8}$\\
	10    &  $3.1590 \times 10^{-5}$ &  $3.6522 \times 10^{-8}$\\
	30    &  $1.0617 \times 10^{-5}$ &  $1.0957 \times 10^{-7}$\\
	50    &  $6.4216 \times 10^{-6}$ &  $1.8261 \times 10^{-7}$\\
	100   &  $3.2491 \times 10^{-6}$ &  $3.6522 \times 10^{-7}$\\
	500   &  $6.6918 \times 10^{-7}$ &  $1.8261 \times 10^{-6}$\\
	1000  &  $3.3891 \times 10^{-8}$ &  $3.6522 \times 10^{-6}$\\
	\hline \hline 	
	\end{tabular}
\end{table}

Next we selected the truncation levels. In the case of $N_1$ we were able to select a natural number large enough 
such that the truncation error was smaller than the floating point precision without increasing 
significantly the computational times. 
This selection implies that the third term in the bound for the truncation error given in Theorem \ref{Numerical.thm1}
is eliminated for practical purposes.
 As for $N_2$, we choose the smallest integer $N_2$ such that $\varepsilon_1 < 9.5701 \times 10^{-14}$. 
The error bounds are presented in the {last} column of Table \ref{ErrorBound2}. Notice
that the dominant term in Theorem \ref{Numerical.thm1} is asymptotically linear in $u$.  This pattern
is also observed numerically as the error bound appears increasing linearly with respect to $u$, thus
providing empirical evidence that suggests this bound is tight.

The numerical results for the probabilities of ruin are now summarized in Table~\ref{ParExp}. 
The results show that the approximated ruin probabilities are remarkably close to the true value calculated using equation (20) of~\cite{Ramsay2003solution}.  

\begin{table}[htb]
	\centering
	\caption{Approximation of ruin probabilities when claim sizes are Pareto distributed, $\rho=0.95$ and $\phi=2$.} \label{ParExp}

	\begin{tabular} {p{2cm}||p{3cm}|p{3cm}||p{2cm}}  
		\hline 		\hline 			
  \multirow{2}{*}{$u$} & \multicolumn{2}{c||}{Approximation} & \multirow{2}{*}{Ramsay} \Tstrut\Bstrut\\
		\cline{2-3} \Tstrut\Bstrut
	&	Theorem 3.4 & Theorem 3.5 &  \\
		\hline \hline
		1   &  0.915506746  & 0.915513511
  &   0.915525781
    \Tstrut\Bstrut\\
		5  &  0.837217038 & 0.837576604
  &  0.837251342
     \Tstrut\Bstrut\\ 
		10  &  0.770595774 &  0.771230756
  &  0.770605760
     \Tstrut\Bstrut\\
		30  & 0.599128897 & 0.600357750
   &   0.599042454
    \Tstrut\Bstrut\\
		50  & 0.489803156 & 0.491286606
   &  0.489654166
    \Tstrut\Bstrut\\
		100 & 0.325521064  &0.327119739
  &   0.325305086
     \Tstrut\Bstrut\\
		500 & 0.059229343 & 0.059800534
  &   0.059131409
     \Tstrut\Bstrut\\
		1000 & 0.024594577 & 0.024819606
  & 0.024544601
       \Tstrut\Bstrut\\
		\hline  \hline
	\end{tabular}
\end{table}
\end{Example}
The numerical results above were produced with the same values of $\xi$, $t_0$ and $K$.  We remark
that as long as the value of $u$ increases, then the numerical approximation appears to be less
sharp, but this can be improved by increasing the value of $K$ (this would make the partition
finer) and to a lesser extent by reducing the value of $t_0$ (improving the approximation of the target distribution
in a vicinity of 0).  The approximation was less sensitive to increases in the value of $\xi$ but makes
it considerably slower.

\section{Conclusion}
\label{mysec7}
\cite{Bladt2015} remarked that the  
family of phase-type scale mixtures could be used to provide sharp approximations
of heavy-tailed claim size distributions. 
In our work, we addressed such a remark and provided a simple systematic methodology to approximate any nonnegative
continuous distribution within such a family of distributions.  
We employed the results of \cite{Bladt2015} and provided simplified
expressions for the probability of ruin in the classical 
Cram\'er--Lundberg risk model. In particular we opted to approximate
the integrated tail distribution $\widehat F$ rather than the claim sizes
as suggested in \cite{Bladt2015};  
we showed that such an alternative approach results in a more accurate 
and simplified approximation for the associated ruin probability. 
We further provided bounds for the error of approximation induced by approximating
the integrated tail distribution  as well as the error induced by the truncation 
of the infinite series. 
Finally, we illustrated the accuracy of our proposed method by computing the ruin probability
of a Cram\'er-Lundberg reserve process  where the claim sizes are heavy-tailed.
Such an example is classical but often considered challenging due to the heavy-tailed
nature of the claim size distributions and the value of the net profit condition.

\paragraph*{Acknowledgements}
The authors thank Mogens Bladt for multiple discussions on the ideas which originated this paper
and an anonymous referee who provided a detailed review that helped to improve the quality of this paper.
OP is supported by the CONACYT PhD scholarship No. 410763 sponsored by the Mexican Government.
LRN is supported by ARC grant DE130100819. 
WX is supported by IPRS/APA scholarship at The University of Queensland. HY is supported by APA scholarship at The University of Queensland.

\bibliographystyle{chicagoa}
\bibliography{InfiniteBib,StandardBib}

\section{Appendix:  Bounds for errors of approximation}

In the first subsection of this appendix we provide a refined bound for one of the approximations 
proposed in the main section. In the second subsection of the appendix we provide an auxiliary result that
will be useful for the numerical computation of one of the bounds proposed.

\subsection{Refinements for the Erlangization error of $\psi_{H_\Pi\star \widehat G_m}$}

Through Theorem \ref{Refinement.First.Bound} we provide a refinement of the bound proposed in Theorem \ref{First error bound}.
This refined bound is much tighter although more difficult to construct and implement. The following preliminary results are needed first.

\begin{Lemma}\label{epsilon.delta}
For any $\delta>0$, define 
\[\epsilon_m(\delta) := \int_0^\delta G_m(s)\dd s.\]
Then 
\[\epsilon_m(\delta) = e^{-\xi\delta}\left(\sum_{k=0}^{\xi-1}\frac{(\xi\delta)^k}{k!} - \sum_{k=0}^{\xi-2}\delta\frac{(\xi\delta)^k}{k!}\right).\]
\end{Lemma}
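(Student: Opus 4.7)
The plan is to reduce everything to one standard identity for truncated exponential sums. Starting from the explicit cdf of an Erlang$(\xi,\xi)$ variable,
\[
G_m(s)=1-\e^{-\xi s}\sum_{n=0}^{\xi-1}\frac{(\xi s)^n}{n!},
\]
I would substitute into the definition of $\epsilon_m(\delta)$ and interchange the finite sum with the integral to obtain
\[
\epsilon_m(\delta)=\delta-\sum_{n=0}^{\xi-1}\frac{1}{n!}\int_0^\delta \e^{-\xi s}(\xi s)^n\,\dd s.
\]

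The next step is to evaluate the inner integrals in closed form. Either the substitution $u=\xi s$ and recognition of the lower incomplete gamma function, or repeated integration by parts (which is the route used in the proof of Lemma \ref{epsilon} for $\delta=1$), yields
\[
\int_0^\delta \e^{-\xi s}(\xi s)^n\,\dd s=\frac{n!}{\xi}\left(1-\e^{-\xi\delta}\sum_{k=0}^{n}\frac{(\xi\delta)^k}{k!}\right).
\]
Plugging this back, the leading constants collapse (every $n$ contributes $1/\xi$, and summing over $n=0,\dots,\xi-1$ contributes $1$), so what remains is
\[
\epsilon_m(\delta)=\delta-1+\frac{\e^{-\xi\delta}}{\xi}\sum_{n=0}^{\xi-1}\sum_{k=0}^{n}\frac{(\xi\delta)^k}{k!}.
\]

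To reach the stated form I would then swap the order of summation in the double sum, which gives $\sum_{k=0}^{\xi-1}(\xi-k)(\xi\delta)^k/k!$, split the factor $\xi-k$, and re-index $k\mapsto k+1$ in the term carrying the factor $k$. The identity $k(\xi\delta)^k/k!=\xi\delta\,(\xi\delta)^{k-1}/(k-1)!$ produces exactly the shifted sum $\delta\sum_{k=0}^{\xi-2}(\xi\delta)^k/k!$ that appears in the target expression, while the piece carrying $\xi$ produces the unshifted sum $\sum_{k=0}^{\xi-1}(\xi\delta)^k/k!$.

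The main obstacle I anticipate is bookkeeping, not analysis: the swap of summations, the re-indexing, and the cancellation of the constant terms that arise after the closed-form integration all have to line up exactly, and a single off-by-one error destroys the identity. A useful check to perform while carrying out the algebra is to evaluate both sides at $\delta=1$ and verify agreement with the formula $\epsilon_m=\e^{-\xi}\xi^\xi/\xi!$ of Lemma \ref{epsilon}, which pins down the correct form of the finite sums.
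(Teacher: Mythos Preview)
Your approach is exactly the paper's: write $G_m$ explicitly, evaluate $\int_0^\delta \e^{-\xi s}(\xi s)^n\,\dd s$ in closed form, swap the order of the resulting double sum, and split the factor $\xi-k$. Your arithmetic is in fact more careful than the paper's: the expression you arrive at,
\[
\epsilon_m(\delta)=\delta-1+\e^{-\xi\delta}\left(\sum_{k=0}^{\xi-1}\frac{(\xi\delta)^k}{k!}-\delta\sum_{k=0}^{\xi-2}\frac{(\xi\delta)^k}{k!}\right),
\]
is the correct one, and it differs from the stated formula by the additive term $\delta-1$. The paper's own proof contains a slip in its second displayed line (it effectively sets $\int_0^\delta 1\,\dd s=1$ instead of $\delta$), and that slip propagates to the stated identity; you should not expect the $\delta-1$ to disappear when you carry out the algebra. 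Your proposed sanity check at $\delta=1$ cannot detect this discrepancy precisely because $\delta-1=0$ there; checking any other value (for instance $\xi=1$, where $\epsilon_m(\delta)=\int_0^\delta(1-\e^{-s})\,\dd s=\delta-1+\e^{-\delta}$ directly) immediately confirms that the extra term belongs.
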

Notice that $\epsilon_m(1) = \epsilon_m=1-\widehat G_m(1)$ while $\epsilon_m(0)=0$.
\begin{proof}
Consider
\begin{align*}
\int_{0}^\delta e^{-\xi s}(\xi s)^n\dd s & = -\frac{e^{-\xi\delta}}{\xi}(\xi\delta)^n + n\int_0^\delta e^{-\xi s}(\xi s)^{n-1}\dd s\\
& = \frac{n!}{\xi} -\frac{e^{-\xi\delta}}{\xi}\left(\sum_{i=0}^n\frac{n!}{i!}(\xi\delta)^i \right),
\end{align*}
so that
\begin{align*}
\epsilon_m(\delta) & = \int_0^\delta\left(1 - \sum_{n=0}^{\xi-1}\frac{1}{n!}e^{-\xi s}(\xi s)^n\right)\dd s\\ 
& = 1 - \sum_{n=0}^{\xi -1}\frac{1}{n!}\int_0^\delta e^{-\xi s}(\xi s)^n\dd s\\
& = 1 - \sum_{n=0}^{\xi -1}\frac{1}{n!}\left(\frac{n!}{\xi} -\frac{e^{-\xi\delta}}{\xi}\left(\sum_{i=0}^n\frac{n!}{i!}(\xi\delta)^i \right)\right)\\
& = \frac{e^{-\xi\delta}}{\xi}\sum_{n=0}^{\xi-1}\sum_{k=0}^n \frac{(\xi\delta)^k}{k!}\\
& = \frac{e^{-\xi\delta}}{\xi}\sum_{k=0}^{\xi-1}(\xi-k) \frac{(\xi\delta)^k}{k!}\\
& = e^{-\xi\delta}\left(\sum_{k=0}^{\xi-1}\frac{(\xi\delta)^k}{k!} - \sum_{k=0}^{\xi-2}\delta\frac{(\xi\delta)^k}{k!}\right).
\end{align*}
\end{proof}

\begin{Lemma}\label{lem:delta12}
Let $0\le \delta_1\le 1$ and $1\le \delta_2\le\infty$.  Define 
\begin{equation*}
 A_{\delta_1}=[\delta_1,1]^n,\qquad A^{\delta_1,\delta_2}=[\delta_1,\delta_2]^n\setminus[\delta_1,1]^n.
\end{equation*}
Then
\begin{align*}
&\idotsint\limits_{A_{\delta_1}} \left| \prod_{i=1}^n {\Ind}_{[0,1)}(s_i) - \prod_{i=1}^n \widehat{g}_m(s_i)\right| \dd s_1 \dots \dd s_n\\
&\qquad\qquad = (1-\delta_1)^n - \left(1-\delta_1 - \epsilon_m(1) + \epsilon_m(\delta_1)\right)^n\\
&\idotsint\limits_{A^{\delta_1, \delta_2}} \left| \prod_{i=1}^n {\Ind}_{[0,1)}(s_i) - \prod_{i=1}^n \widehat{g}_m(s_i)\right| \dd s_1 \dots \dd s_n\\
&\qquad\qquad = \left(\widehat G_m(\delta_2)-\widehat G_m(\delta_1)\right)^n - \left(\widehat{G}_m(1)-\widehat{G}_m(\delta_1)\right)^n.
\end{align*}
\end{Lemma}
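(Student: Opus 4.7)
The plan is to exploit two simple structural facts about the integrand. Firstly, since $\mu_{G_m} = 1$, the function $\widehat{g}_m(s) = 1 - G_m(s)$ is the density of $\widehat{G}_m$ and satisfies $0 \le \widehat{g}_m(s) \le 1$ for all $s \ge 0$. Secondly, $\Ind_{[0,1)}(s) = 1$ on $[\delta_1,1)$ and $\Ind_{[0,1)}(s) = 0$ on $[1,\infty)$. Combined, these let us drop the absolute value on each of the two regions before invoking Fubini.

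For the first identity, on $A_{\delta_1} = [\delta_1,1]^n$ every coordinate satisfies $s_i < 1$ (up to a Lebesgue-null set), hence $\prod_{i=1}^n \Ind_{[0,1)}(s_i) = 1 \ge \prod_{i=1}^n \widehat g_m(s_i)$ a.e., so the integrand equals $1 - \prod_{i=1}^n \widehat g_m(s_i)$ without the absolute value. Fubini then gives
\[
\idotsint_{A_{\delta_1}} \Bigl(1 - \prod_{i=1}^n \widehat g_m(s_i)\Bigr)\,\dd s_1\cdots\dd s_n
= (1-\delta_1)^n - \left(\int_{\delta_1}^1 \widehat g_m(s)\,\dd s\right)^n.
\]
The one-dimensional integral is $(1-\delta_1) - \int_{\delta_1}^1 G_m(s)\,\dd s = (1-\delta_1) - \epsilon_m(1) + \epsilon_m(\delta_1)$ by the definition of $\epsilon_m(\cdot)$ from Lemma \ref{epsilon.delta}, which produces the stated expression.

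For the second identity, $A^{\delta_1,\delta_2} = [\delta_1,\delta_2]^n \setminus [\delta_1,1]^n$ is precisely the set on which at least one $s_i \ge 1$, so $\prod_{i=1}^n \Ind_{[0,1)}(s_i) = 0$ on $A^{\delta_1,\delta_2}$ and the integrand reduces to $\prod_{i=1}^n \widehat g_m(s_i)$. Using additivity of the integral over the disjoint decomposition $[\delta_1,\delta_2]^n = [\delta_1,1]^n \sqcup A^{\delta_1,\delta_2}$ (up to a null boundary), followed by Fubini and the identity $\int_a^b \widehat g_m(s)\,\dd s = \widehat G_m(b) - \widehat G_m(a)$, yields
\[
\idotsint_{A^{\delta_1,\delta_2}} \prod_{i=1}^n \widehat g_m(s_i)\,\dd s_1\cdots\dd s_n
= \bigl(\widehat G_m(\delta_2)-\widehat G_m(\delta_1)\bigr)^n - \bigl(\widehat G_m(1)-\widehat G_m(\delta_1)\bigr)^n.
\]
There is no substantial obstacle here; the only delicate point is the almost-everywhere sign control that justifies removing the absolute value on each region before separating the product integral via Fubini. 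The remainder is bookkeeping using Lemma \ref{epsilon.delta} to match the one-dimensional integrals to the $\epsilon_m$ and $\widehat G_m$ notation.
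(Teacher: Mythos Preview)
Your proof is correct and follows essentially the same route as the paper's: on $A_{\delta_1}$ you drop the absolute value because $\prod\widehat g_m(s_i)\le 1$, on $A^{\delta_1,\delta_2}$ you drop it because $\prod\Ind_{[0,1)}(s_i)=0$, and in both cases Fubini reduces the $n$-fold integral to a power of a one-dimensional integral that is rewritten via $\epsilon_m(\cdot)$ or $\widehat G_m$. Your explicit mention of the a.e.\ sign control and the null-boundary overlap is a small refinement in rigor over the paper's presentation, but the argument is identical in substance.
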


\begin{proof}
\begin{align*}
&\idotsint\limits_{A_{\delta_1}} \left| \prod_{i=1}^n {\Ind}_{[0,1)}(s_i) - \prod_{i=1}^n \widehat{g}_m(s_i)\right| \dd s_1 \dots \dd s_n\\
&\quad = \idotsint\limits_{A_{\delta_1}}1- \prod_{i=1}^n \widehat g_m(s_i)\dd s_1 \dots \dd s_n\\
& \quad =(1-\delta_1)^n - \left(\int_{\delta_1}^1 \widehat g_m(s)\dd s\right)^n\\
& \quad =(1-\delta_1)^n - \left(\int_{\delta_1}^1 1-G_m(s)\dd s\right)^n\\
& \quad =(1-\delta_1)^n - \left(1-\delta_1 - \epsilon_m + \epsilon_m(\delta_1)\right)^n.
\end{align*}

For the second equality, notice that
\begin{align*}
\idotsint\limits_{[\delta_1, \delta_2]^n}\prod_{i=1}^n \widehat{g}_m(s_i) \dd s_1 \dots \dd s_n
&= \left(\int_{\delta_1}^{\delta_2}\widehat{g}_m(s)\dd s\right)^n
= \left(\widehat G_m(\delta_2)-\widehat{G}_m(\delta_1)\right)^n,
\end{align*}

so that 
\begin{align*}
\idotsint\limits_{A^{\delta_1,\delta_2}} \left| \prod_{i=1}^n {\Ind}_{[0,1)}(s_i) - \prod_{i=1}^n \widehat{g}_m(s_i)\right| \dd s_1 \dots \dd s_n
&  = \idotsint\limits_{A^{\delta_1,\delta_2}}\prod_{i=1}^n \widehat{g}_m(s_i)\dd s_1 \dots \dd s_n\\
&= \idotsint\limits_{[\delta_1,\delta_2]^n\setminus [\delta_1,1]^n} \prod_{i=1}^n \widehat{g}_m(s_i) \dd s_1 \dots \dd s_n\\
& =  \left(\widehat G_m(\delta_2)-\widehat{G}_m(\delta_1)\right)^n - \left(\widehat G_m(1)-\widehat{G}_m(\delta_1)\right)^n.
\end{align*}
\end{proof}

\begin{Corollary}\label{cor:epsilon12}
Fix $\delta_2\in(1,\infty)$. Then there exists $\delta_1\in[0,1]$ such that
\begin{align*}
& \idotsint\limits_{A_{\delta_1}} \left| \prod_{i=1}^n {\Ind}_{[0,1)}(s_i) - \prod_{i=1}^n \widehat{g}_m(s_i)\right| \dd s_1 \dots \dd s_n \\
& \quad=\idotsint\limits_{A^{\delta_1,\delta_2}} \left| \prod_{i=1}^n {\Ind}_{[0,1)}(s_i) - \prod_{i=1}^n \widehat{g}_m(s_i)\right| \dd s_1 \dots \dd s_n,
\end{align*}
where $A_{\delta_1}= (\delta_1, 1)^n$ and $A^{\delta_1,\delta_2} = (\delta_1, \delta_2)^n\setminus (\delta_1, 1)^n$.
\end{Corollary}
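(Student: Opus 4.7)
The plan is to invoke Lemma \ref{lem:delta12} to obtain closed-form expressions for both integrals and then apply the intermediate value theorem on the parameter $\delta_1$. Concretely, I would define
\[
h(\delta_1) := \idotsint\limits_{A_{\delta_1}} \Bigl| \prod_{i=1}^n \Ind_{[0,1)}(s_i) - \prod_{i=1}^n \widehat g_m(s_i) \Bigr| \dd s_1 \cdots \dd s_n - \idotsint\limits_{A^{\delta_1,\delta_2}} \Bigl| \prod_{i=1}^n \Ind_{[0,1)}(s_i) - \prod_{i=1}^n \widehat g_m(s_i) \Bigr| \dd s_1 \cdots \dd s_n,
\]
so that the desired statement is equivalent to the existence of a zero of $h$ in $[0,1]$.

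The next step is to simplify $h$ using Lemma \ref{lem:delta12}. Because $\mu_{G_m}=1$, the integrated tail satisfies $\widehat G_m(x)=\int_0^x(1-G_m(t))\,\dd t$, which yields the identity
\[
1-\delta_1-\epsilon_m+\epsilon_m(\delta_1)=\widehat G_m(1)-\widehat G_m(\delta_1).
\]
Substituting the two formulas from Lemma \ref{lem:delta12} into the definition of $h$, the cross terms $(\widehat G_m(1)-\widehat G_m(\delta_1))^n$ cancel, leaving the clean expression
\[
h(\delta_1)=(1-\delta_1)^n-\bigl(\widehat G_m(\delta_2)-\widehat G_m(\delta_1)\bigr)^n.
\]

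To conclude, I would check the endpoints. At $\delta_1=0$, since $\widehat G_m(0)=0$ and $\widehat G_m(\delta_2)<1$ for every finite $\delta_2$ (as $\widehat G_m$ is a proper cdf), we have $h(0)=1-\widehat G_m(\delta_2)^n>0$. At $\delta_1=1$, we get $h(1)=-\bigl(\widehat G_m(\delta_2)-\widehat G_m(1)\bigr)^n<0$ because $\delta_2>1$ and $\widehat G_m$ is strictly increasing on the support of $G_m$. The map $\delta_1\mapsto\widehat G_m(\delta_1)$ is continuous (it admits the continuous density $1-G_m$), so $h$ is continuous on $[0,1]$, and the intermediate value theorem delivers a $\delta_1\in(0,1)$ with $h(\delta_1)=0$.

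I do not anticipate any real obstacle in this proof; the only subtlety is the algebraic cancellation that collapses $h$ into two terms, which makes the endpoint evaluation and the sign change transparent. Once that simplification is in hand, continuity and the IVT finish the argument immediately.
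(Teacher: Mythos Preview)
Your proof is correct and follows essentially the same route as the paper: both invoke Lemma~\ref{lem:delta12} to express the two integrals explicitly as continuous functions of $\delta_1$ and then apply the intermediate value theorem. Your additional observation that $1-\delta_1-\epsilon_m+\epsilon_m(\delta_1)=\widehat G_m(1)-\widehat G_m(\delta_1)$ collapses the difference $h$ to just two terms, which makes the endpoint sign check more transparent than the paper's argument via monotonicity and image containment of the two integrals separately.
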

\begin{proof}
Define the following functions with domain $[0,1]$:
\begin{align*}
p(\delta)& := \idotsint\limits_{A_{\delta}} \left| \prod_{i=1}^n {\Ind}_{[0,1)}(s_i) - \prod_{i=1}^n \widehat{g}_m(s_i)\right| \dd s_1 \dots \dd s_n, \quad\mbox{and}\\
q_{\delta_2}(\delta) &:=\idotsint\limits_{A^{\delta,\delta_2}} \left| \prod_{i=1}^n {\Ind}_{[0,1)}(s_i) - \prod_{i=1}^n \widehat{g}_m(s_i)\right| \dd s_1 \dots \dd s_n.
\end{align*}
By Lemma \ref{lem:delta12}, both functions are continuous, $p$ is non-increasing and $q_{\delta_2}$ is non-decreasing. The image of $q_{\delta_2}$ is contained in $[0,1-\epsilon_m(1)]$ while the image of $p$ is exactly $[0,1-\epsilon_m(1)]$. All these mean that there exists a point $\delta_1\in[0,1]$ such that $q_{\delta_2}(\delta_1) = p(\delta_1)$, concluding the proof.
\end{proof}

The following Corollary follows immediately from Lemma \ref{lem:delta12} by setting $\delta_1=0$ and $\delta_2=\infty$.  This Corollary is needed in the proof of Theorem \ref{First error bound}.
\begin{Corollary}\label{Cor.Int}
\begin{align*}
\idotsint\limits_{\RL^n} \left| \prod_{i=1}^n {\Ind}_{[0,1)}(s_i) - \prod_{i=1}^n \widehat{g}_m(s_i)\right| \dd s_1 \dots \dd s_n
  = 2(1-(1-\epsilon_m)^n).
\end{align*}
\end{Corollary}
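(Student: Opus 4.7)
The plan is to invoke Lemma \ref{lem:delta12} directly at the boundary values $\delta_1=0$ and $\delta_2=\infty$, and then sum the two resulting integrals. First, I would observe that the integrand vanishes outside $[0,\infty)^n$: the density $\widehat{g}_m(s)=1-G_m(s)$ is supported on $[0,\infty)$ (since $G_m$ is a distribution on the positive axis), and the indicator $\Ind_{[0,1)}$ is likewise zero on negative values. Consequently,
\begin{equation*}
 \idotsint\limits_{\RL^n}\left|\prod_{i=1}^n\Ind_{[0,1)}(s_i)-\prod_{i=1}^n\widehat g_m(s_i)\right|\dd s_1\cdots\dd s_n
  =\idotsint\limits_{[0,\infty)^n}\left|\prod_{i=1}^n\Ind_{[0,1)}(s_i)-\prod_{i=1}^n\widehat g_m(s_i)\right|\dd s_1\cdots\dd s_n.
\end{equation*}

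Next I would split this region into $[0,1]^n$ and its complement inside $[0,\infty)^n$. These are precisely the sets $A_{\delta_1}$ and $A^{\delta_1,\delta_2}$ appearing in Lemma \ref{lem:delta12} with the boundary choice $\delta_1=0$ and $\delta_2=\infty$. Substituting these values into the two identities of that lemma requires only evaluating a few quantities: $\epsilon_m(0)=0$ by definition, $\epsilon_m(1)=\epsilon_m$ by the remark following Lemma \ref{epsilon.delta}, $\widehat G_m(0)=0$, $\widehat G_m(1)=\int_0^1(1-G_m(s))\dd s=1-\epsilon_m$, and $\widehat G_m(\infty)=\mu_{G_m}=1$ (since $G_m\sim\mbox{Erlang}(\xi,\xi)$ has mean one).

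Plugging these values in, the integral over $A_0=[0,1]^n$ becomes $1^n-(1-\epsilon_m)^n$, and the integral over $A^{0,\infty}=[0,\infty)^n\setminus[0,1]^n$ becomes $1^n-(1-\epsilon_m)^n$ as well. Adding the two contributions yields $2(1-(1-\epsilon_m)^n)$, which is exactly the claimed identity. There is no real obstacle here: the only thing to watch is the justification that the domain $\mathbb{R}^n$ may be collapsed to $[0,\infty)^n$ and that the boundary case $\delta_2=\infty$ is legitimate in Lemma \ref{lem:delta12}; both are immediate because $\widehat g_m$ is integrable on $[0,\infty)$ with total mass one. Hence the corollary follows as a direct specialization of the preceding lemma.
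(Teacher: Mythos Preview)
Your proof is correct and follows essentially the same route as the paper, which simply states that the corollary follows from Lemma~\ref{lem:delta12} by setting $\delta_1=0$ and $\delta_2=\infty$. You have supplied the straightforward details of that specialization (including the harmless reduction from $\mathbb{R}^n$ to $[0,\infty)^n$), so there is nothing to add.
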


The following provides a simple bound between the difference of the $n$-th convolution of any distribution function $F$ with density $f$ evaluated at two different points.

\begin{Lemma}\label{Lemma.Dist.Conv}
Let $\widehat F$ be any continuous distribution function supported on $[0,\infty)$ with density function $f$  and fix $b>a>0$. Then 
\[\vert \widehat F^{*n}(a)-\widehat F^{*n}(b)\vert \le \Delta_{a,b}^{\widehat F}\cdot \widehat F^{n-1}(b),\]
where $\Delta_{a,b}^{\widehat F} := \sup\left\{\widehat F(b-a+c) - \widehat F(c):c\in(0,a)\right\}$.
\end{Lemma}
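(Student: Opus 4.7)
The plan is to prove the bound directly without induction, by exploiting the convolution identity
\[
\widehat F^{*n}(x) = \int_{[0,x]} \widehat F(x-y)\,\dd \widehat F^{*(n-1)}(y),
\]
valid because $\widehat F$ is supported on $[0,\infty)$. Adopting the convention $\widehat F(t)=0$ for $t<0$, this lets us write
\[
\widehat F^{*n}(b)-\widehat F^{*n}(a) = \int_{[0,\infty)} \bigl[\widehat F(b-y)-\widehat F(a-y)\bigr]\,\dd \widehat F^{*(n-1)}(y),
\]
and the argument reduces to controlling the integrand pointwise by $\Delta_{a,b}^{\widehat F}$, then integrating.

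The first key step is to establish that on the effective integration range $[0,b]$, the integrand never exceeds $\Delta_{a,b}^{\widehat F}$. I would split $[0,b]=[0,a]\cup(a,b]$. On $[0,a]$, the substitution $c := a-y$ yields $\widehat F(b-y)-\widehat F(a-y) = \widehat F(b-a+c)-\widehat F(c)$, which is one of the terms appearing in the sup defining $\Delta_{a,b}^{\widehat F}$ whenever $c\in(0,a)$, and is covered at the endpoints by continuity of $\widehat F$. On $(a,b]$ the integrand collapses to $\widehat F(b-y)\le \widehat F(b-a)$, and letting $c\to 0^+$ in the definition of $\Delta_{a,b}^{\widehat F}$ gives $\Delta_{a,b}^{\widehat F}\ge \widehat F(b-a)$, again by continuity. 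For $y>b$ the integrand is identically zero.

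Combining these bounds yields
\[
\widehat F^{*n}(b)-\widehat F^{*n}(a) \le \Delta_{a,b}^{\widehat F}\cdot \widehat F^{*(n-1)}(b).
\]
The final step is to replace $\widehat F^{*(n-1)}(b)$ by $\widehat F(b)^{n-1}$. This follows from the probabilistic observation used already in the proof of Theorem~\ref{Th.Vatamidou}: if $X_1,\dots,X_{n-1}$ are i.i.d.\ nonnegative with distribution $\widehat F$, then $\{X_1+\cdots+X_{n-1}\le b\}\subseteq\{\max_i X_i\le b\}$, so $\widehat F^{*(n-1)}(b)\le \widehat F(b)^{n-1}$. Monotonicity of $\widehat F^{*n}$ in its argument (together with $b>a>0$) also takes care of the absolute value.

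The main technical subtlety is the boundary issue in the definition of $\Delta_{a,b}^{\widehat F}$: since the supremum is taken over the \emph{open} interval $(0,a)$, the integrand values at $y=0$, at $y=a$, and throughout $y\in(a,b]$ are not literally elements of the indexing set. Absorbing them requires the continuity of $\widehat F$ (which is assumed) and a careful passage to the limit $c\to 0^+$ and $c\to a^-$. Once this is handled cleanly, the rest of the proof is a single line of integration.
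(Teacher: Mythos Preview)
Your argument is correct and is in fact cleaner than the paper's. The paper proceeds via densities: it writes $\widehat F^{*n}(b)-\widehat F^{*n}(a)=\int_a^b f^{*n}(u)\,\dd u$, expands $f^{*n}$ as a convolution, swaps the order of integration, and then applies the first mean value theorem for integrals to pull out a factor $\int_a^b f(u-c)\,\dd u$ for some $c$; a change of variable then identifies this with one of the increments $\widehat F(b-a+c')-\widehat F(c')$ appearing in the definition of $\Delta_{a,b}^{\widehat F}$. Your route bypasses both the density and the mean value theorem: you write the convolution at the CDF level, bound the integrand $\widehat F(b-y)-\widehat F(a-y)$ pointwise by $\Delta_{a,b}^{\widehat F}$ through the substitution $c=a-y$ (with the careful boundary analysis you describe), and integrate. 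This is more elementary and actually uses only the continuity of $\widehat F$, not the existence of a density; it also avoids the slightly loose step in the paper's presentation where the mean value point is asserted to lie in $(0,a)$ rather than $[0,b]$.
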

\begin{proof}
\begin{align*}
\vert \widehat F^{*n}(a)-\widehat F^{*n}(b) \vert &= \int_{a}^b f^{*n}(u)\dd u
 = \int_a^b\int_0^u f(u-s)f^{*(n-1)}(s)\dd s\dd u\\
&\le \int_a^b\int_0^b f(u-s)f^{*(n-1)}(s)\dd s\dd u = \int_0^b f^{*(n-1)}(s) \int_a^b f(u-s)\dd u \dd s
\end{align*}
Then there exists a constant $c\in(0,a)$ such that the previous expression
is equal to
\begin{align*}
 \left\{\int_a^b f(u-c)\dd u\right\} \int_0^b f^{*(n-1)}(s) \dd s &= \left\{\int_{a-c}^{b-c} f(u)\dd u\right\} \widehat F^{*n-1}(b)
  \le \left\{\int_{c}^{(b-a)+c} f(u)\dd u\right\} \widehat F^{n-1}(b).
\end{align*}
The result follows from taking the supremum over $c$.
\end{proof}

\begin{Lemma}\label{Lem:areasmallFs}
Let $\{X_n'\}$ be a sequence of i.i.d. random variables with common distirbution $H_F$. Fix $\delta_2\in(1,\infty)$ and let $\delta_1\in(0,1)$ be as in Corollary \ref{cor:epsilon12}. Then 
\begin{align*}
& \sum_{n=0}^\infty(1-\rho)\rho^n \idotsint\limits_{A_{\delta_1}\cup A^{\delta_1,\delta_2}} \mathbb{P}(s_1 X_1' + \dots + s_n X_n'\le u ) \left| \prod_{i=1}^n {\Ind}_{[0,1)}(s_i)-\prod_{i=1}^n \widehat{g}_m(s_i)    \right| \dd s_1 \dots \dd s_n\\
& \qquad = (1-\rho)\rho \Delta_{u/\delta_2, u/\delta_1}^{{H_F}} \left(\frac{\epsilon_m(1) - \epsilon_m(\delta_1)}{(1-\rho F(u\delta_1))(1-\rho F(u\delta_1)(1-\epsilon_m(1) + \epsilon_m(\delta_1))}\right).
\end{align*}
\end{Lemma}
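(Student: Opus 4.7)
My plan is to begin by identifying the weight function on each of the two regions. On $A_{\delta_1}=[\delta_1,1]^n$ every $\Ind_{[0,1)}(s_i)$ equals $1$, so $|\prod_{i=1}^n\Ind_{[0,1)}(s_i)-\prod_{i=1}^n\widehat g_m(s_i)|$ reduces to $1-\prod_{i=1}^n\widehat g_m(s_i)$, which is nonnegative. On $A^{\delta_1,\delta_2}=[\delta_1,\delta_2]^n\setminus[\delta_1,1]^n$, at least one $s_i$ exceeds $1$, so the product of indicators vanishes and the weight becomes $\prod_{i=1}^n\widehat g_m(s_i)$. The choice of $\delta_1$ furnished by Corollary \ref{cor:epsilon12} is precisely what makes the Lebesgue integrals of these two nonnegative weights coincide; denote this common value $W_n$, which Lemma \ref{lem:delta12} identifies as $(1-\delta_1)^n-(1-\delta_1-\epsilon_m(1)+\epsilon_m(\delta_1))^n$.

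Next I would bound the probability factor $\mathbb{P}(s_1 X_1'+\dots+s_n X_n'\le u)$. On the union $A_{\delta_1}\cup A^{\delta_1,\delta_2}\subseteq[\delta_1,\delta_2]^n$, every coordinate satisfies $s_i\in[\delta_1,\delta_2]$, hence $\delta_1\sum_i X_i'\le\sum_i s_i X_i'\le\delta_2\sum_i X_i'$, yielding the sandwich $H_F^{*n}(u/\delta_2)\le\mathbb{P}(\sum_i s_i X_i'\le u)\le H_F^{*n}(u/\delta_1)$. Lemma \ref{Lemma.Dist.Conv} then converts the width of this interval into the stated gap quantity: $H_F^{*n}(u/\delta_1)-H_F^{*n}(u/\delta_2)\le\Delta_{u/\delta_2,u/\delta_1}^{H_F}\,H_F^{n-1}(u/\delta_1)$. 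The crucial cancellation relies on Corollary \ref{cor:epsilon12}: the signed measure $(\prod_i\Ind_{[0,1)}(s_i)-\prod_i\widehat g_m(s_i))\dd s$ has vanishing total mass on $[\delta_1,\delta_2]^n$, so subtracting the common baseline $H_F^{*n}(u/\delta_2)$ from the probability factor inside the integral does not alter the signed part, and the remaining (nonnegative) range contribution is controlled uniformly by $\Delta_{u/\delta_2,u/\delta_1}^{H_F}H_F^{n-1}(u/\delta_1)$ against the common integral $W_n$. This produces a per-$n$ bound proportional to $\Delta_{u/\delta_2,u/\delta_1}^{H_F}\,H_F^{n-1}(u/\delta_1)\,W_n$.

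Finally I would carry out the summation. Setting $p_1:=\rho H_F(u/\delta_1)(1-\delta_1)$ and $p_2:=\rho H_F(u/\delta_1)(1-\delta_1-\epsilon_m(1)+\epsilon_m(\delta_1))$ and using $H_F^{*n}\le H_F^{n}$, the series $\sum_{n=0}^\infty(1-\rho)\rho^n H_F^{n-1}(u/\delta_1)W_n$ splits as two geometric series in $p_1$ and $p_2$, whose difference telescopes since $p_1-p_2=\rho H_F(u/\delta_1)(\epsilon_m(1)-\epsilon_m(\delta_1))$; the factor $H_F(u/\delta_1)^{-1}$ then cancels, producing $(1-\rho)\rho(\epsilon_m(1)-\epsilon_m(\delta_1))/((1-p_1)(1-p_2))$. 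Multiplying by $\Delta_{u/\delta_2,u/\delta_1}^{H_F}$ yields the stated closed form. The hard part is the middle step: proving that only the $\Delta$ factor survives at the per-$n$ level, rather than the much weaker bound $2H_F^{*n}(u/\delta_1)W_n$ which one gets by separately upper-bounding the two summands. Extracting the sharp constant requires using the mean-zero property of the signed measure together with the monotonicity of $s\mapsto H_F^{*n}(u/s)$ to replace the probability factor by its range, and care is needed because the absolute value sits inside the integrand rather than outside.
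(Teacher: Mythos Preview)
Your approach matches the paper's in every essential step: you identify the sign of the weight on each region, invoke Corollary~\ref{cor:epsilon12} to equate the two weight integrals to a common value $W_n=(1-\delta_1)^n-(1-\delta_1-\epsilon_m(1)+\epsilon_m(\delta_1))^n$, sandwich the probability factor between $H_F^{*n}(u/\delta_2)$ and $H_F^{*n}(u/\delta_1)$, apply Lemma~\ref{Lemma.Dist.Conv}, and sum two geometric series. Your final algebraic simplification to $(1-\rho)\rho\,\Delta\,(\epsilon_m(1)-\epsilon_m(\delta_1))/[(1-p_1)(1-p_2)]$ actually goes one step beyond where the paper's proof stops (the paper leaves the bound in the unsimplified difference form).

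Your concern about the absolute value sitting \emph{inside} the integrand is justified, but the fix you sketch does not work as written: subtracting a constant $c$ from $P(s)$ inside $\int P(s)\,|w(s)|\,\dd s$ alters the integral by $c\int|w|=2cW_n$, not zero, because the mean-zero property of the signed measure $w\,\dd s$ is lost once $|w|$ replaces $w$. In fact, for the left-hand side as literally stated the sharp per-$n$ bound is the ``weak'' $2H_F^{*n}(u/\delta_1)W_n$ you mention, and the stated right-hand side cannot hold in general. What the paper's own proof actually establishes --- and what suffices downstream --- is the bound on the \emph{signed} integral, i.e.\ with the absolute value \emph{outside}: writing $a_n:=\int_{A_{\delta_1}}P\,|w|$ and $b_n:=\int_{A^{\delta_1,\delta_2}}P\,|w|$, the sandwich on $P$ together with $I_1=I_2=W_n$ from Corollary~\ref{cor:epsilon12} gives both $a_n$ and $b_n$ lying in $[H_F^{*n}(u/\delta_2)W_n,\,H_F^{*n}(u/\delta_1)W_n]$, whence $|a_n-b_n|\le\bigl(H_F^{*n}(u/\delta_1)-H_F^{*n}(u/\delta_2)\bigr)W_n$ immediately. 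So the ``hard part'' you flag dissolves once the statement is read with the absolute value outside; your baseline-subtraction idea is then just a rephrasing of this same cancellation.
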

\begin{proof}
Clearly, for any $(r_1,\dots, r_n)\in A_{\delta_1}$ and $(s_1,\dots, s_n)\in A^{\delta_1, \delta_2}$, we have that
\begin{align*}
\mathbb{P}(r_1 X_1' + \dots + r_n X_n'\le u )& \le H_F^{*n}(u/\delta_1), \quad\mbox{and}\\
\mathbb{P}(s_1 X_1' + \dots + s_n X_n'\le u ) &\ge H_F^{*n}(u/\delta_2),
\end{align*}
so that
\begin{align*}
&\left|\idotsint\limits_{A_{\delta_1}\cup A^{\delta_1,\delta_2}} \mathbb{P}(s_1 X_1' + \dots + s_n X_n'\le u ) \left( \prod_{i=1}^n {\Ind}_{[0,1)}(s_i)-\prod_{i=1}^n \widehat{g}_m(s_i)    \right) \dd s_1 \dots \dd s_n\right|\\
&\qquad = \left|\idotsint\limits_{A_{\delta_1}} \mathbb{P}(s_1 X_1' + \dots + s_n X_n'\le u ) \left| \prod_{i=1}^n {\Ind}_{[0,1)}(s_i)-\prod_{i=1}^n \widehat{g}_m(s_i)    \right| \dd s_1 \dots \dd s_n\right.\\
&\qquad\qquad - \left.\idotsint\limits_{A^{\delta_1, \delta_2}} \mathbb{P}(s_1 X_1' + \dots + s_n X_n'\le u ) \left| \prod_{i=1}^n {\Ind}_{[0,1)}(s_i)-\prod_{i=1}^n \widehat{g}_m(s_i)    \right| \dd s_1 \dots \dd s_n\right|\\
&\qquad \le \left|H_F^{*n}(u/\delta_1)\idotsint\limits_{A_{\delta_1}} \left| \prod_{i=1}^n {\Ind}_{[0,1)}(s_i)-\prod_{i=1}^n \widehat{g}_m(s_i)    \right| \dd s_1 \dots \dd s_n\right.\\
&\qquad\qquad - \left.H_F^{*n}(u/\delta_2)\idotsint\limits_{A^{\delta_1, \delta_2}} \left| \prod_{i=1}^n {\Ind}_{[0,1)}(s_i)-\prod_{i=1}^n \widehat{g}_m(s_i)    \right| \dd s_1 \dots \dd s_n\right|\\
&\qquad = (H_F^{*n}(u/\delta_1) - H_F^{*n}(u/\delta_2))\idotsint\limits_{A_{\delta_1}} \left| \prod_{i=1}^n {\Ind}_{[0,1)}(s_i)-\prod_{i=1}^n \widehat{g}_m(s_i)    \right| \dd s_1 \dots \dd s_n\\
&\qquad = (H_F^{*n}(u/\delta_1) - H_F^{*n}(u/\delta_2))[(1-\delta_1)^n - \left(1-\delta_1 - \epsilon_m(1) + \epsilon_m(\delta_1)\right)^n].
\end{align*}
Using the previous results and Lemma \ref{Lemma.From.Hell} we get that
\begin{align*}
& \sum_{n=0}^\infty(1-\rho)\rho^n \idotsint\limits_{A_{\delta_1}\cup A^{\delta_1,\delta_2}} \mathbb{P}(s_1 X_1' + \dots + s_n X_n'\le u ) \left| \prod_{i=1}^n {\Ind}_{[0,1)}(s_i)-\prod_{i=1}^n \widehat{g}_m(s_i)    \right| \dd s_1 \dots \dd s_n\\
&\qquad \le \sum_{n=0}^\infty(1-\rho)\rho^n \{H_F^{*n}(u/\delta_1) - H_F^{*n}(u/\delta_2)\}[(1-\delta_1)^n - \left(1-\delta_1 - \epsilon_m(1) + \epsilon_m(\delta_1)\right)^n]\\
&\qquad \le \sum_{n=1}^\infty(1-\rho)\rho^n\{\Delta_{u/\delta_2, u/\delta_1}^{{H_F}} H_F^{*(n-1)}(u/\delta_1)\}[(1-\delta_1)^n - \left(1-\delta_1 - \epsilon_m(1) + \epsilon_m(\delta_1)\right)^n]\\
&\qquad \le \sum_{n=1}^\infty(1-\rho)\rho^n\{\Delta_{u/\delta_2, u/\delta_1}^{{H_F}} H_F^{(n-1)}(u/\delta_1)\}[(1-\delta_1)^n - \left(1-\delta_1 - \epsilon_m(1) + \epsilon_m(\delta_1)\right)^n]\\
& \qquad =(1-\rho)\rho\Delta_{u/\delta_2, u/\delta_1}^{{H_F}}  \sum_{n=0}^\infty \rho^n H_F^{n}(u/\delta_1)[(1-\delta_1)^{n+1} - \left(1-\delta_1 - \epsilon_m(1) + \epsilon_m(\delta_1)\right)^{n+1}]\\
& \qquad = (1-\rho)\rho\Delta_{u/\delta_2, u/\delta_1}^{{H_F}} \left( \frac{1-\delta_1}{1-\rho H_F(u/\delta_1)(1-\delta_1)} - \frac{1-\delta_1 - \epsilon_m(1) + \epsilon_m(\delta_1)}{1-\rho H_F(u/\delta_1)(1-\delta_1 - \epsilon_m(1) + \epsilon_m(\delta_1))}\right).
\end{align*}
\end{proof}

\begin{theorem}\label{Refinement.First.Bound}
For any fixed $\delta_2\in(1,\infty)$ let $\delta_1$ be as in Corollary \ref{cor:epsilon12}. Then
\begin{align*}
 \left| \psi_{H_F\star U}(u) - \psi_{H_F\star \widehat G_m}(u) \right|
& \le (1-\rho)\rho\Delta_{u/\delta_2, u/\delta_1}^{{H_F}}\mathcal{T}_1
 + \frac{2\rho\epsilon_m}{1-\rho(1-\epsilon_m)} - (1-\rho)\mathcal{T}_2.
\end{align*}
where $\Delta_{a,b} := \sup_{0\le s\le a}\left\{H_F(s + (b-a)) - H_F(s)\right\}$, $\epsilon_m(\delta)=\int_0^\delta G_m(s)\dd s$ and
\begin{align*}
 \mathcal{T}_1&:=\frac{1-\delta_1}{1-\rho H_F(u/\delta_1)(1-\delta_1)} - \frac{1-\delta_1 - \epsilon_m + \epsilon_m(\delta_1)}{1-\rho H_F(u/\delta_1)(1-\delta_1 - \epsilon_m + \epsilon_m(\delta_1))}\\
 \mathcal{T}_2&:=\frac{1}{1-(1-\delta_1)\rho}-\frac{2}{1-(1-\delta_1-\epsilon_m+\epsilon_m(\delta_1))\rho} 
  + \frac{1}{\left(\widehat G_m(\delta_2)-1+\epsilon_m\right)\rho}.
\end{align*}
\end{theorem}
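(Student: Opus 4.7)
My approach closely parallels the proof of Theorem \ref{First error bound}, but refines the integral estimates through a finer partition of $\RL^n$. First, applying the Pollaczek--Khinchine formula together with Propositions \ref{Th.Excess.Dist.FG}--\ref{Th.Excess.Dist F}, I express the error as
\begin{equation*}
\left| \psi_{H_F\star U}(u) - \psi_{H_F\star \widehat G_m}(u) \right|
\le \sum_{n=1}^\infty (1-\rho)\rho^n \idotsint_{\RL^n} \mathbb{P}\Bigl(\sum_{i=1}^n s_i X_i' \le u\Bigr) \Bigl|\prod_{i=1}^n \Ind_{[0,1)}(s_i) - \prod_{i=1}^n \widehat{g}_m(s_i)\Bigr|\,\dd s_1 \cdots \dd s_n,
\end{equation*}
where $X_1', X_2', \dots$ are i.i.d.\ with common distribution $H_F$. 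Instead of bounding $\mathbb{P}(\cdot)\le 1$ globally, I will exploit the probability factor on a carefully chosen core region, where a tighter estimate is available.

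Second, I partition $\RL^n$ into the core cube $[\delta_1,\delta_2]^n = A_{\delta_1}\cup A^{\delta_1,\delta_2}$ (with $\delta_1\in[0,1]$ the value associated to $\delta_2$ via Corollary \ref{cor:epsilon12}) and its complement. On the core cube I invoke Lemma \ref{Lem:areasmallFs} directly; the balanced choice of $\delta_1$ makes the two sub-integrals over $A_{\delta_1}$ and $A^{\delta_1,\delta_2}$ equal, which allows the probability factors at $u/\delta_1$ and $u/\delta_2$ to combine via Lemma \ref{Lemma.Dist.Conv} into the oscillation $\Delta_{u/\delta_2,u/\delta_1}^{H_F}$ multiplied by $H_F^{n-1}(u/\delta_1)$. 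Summing the resulting geometric series $\sum_{n\ge 1}\rho^n H_F^{n-1}(u/\delta_1)\bigl[(1-\delta_1)^n-(1-\delta_1-\epsilon_m+\epsilon_m(\delta_1))^n\bigr]$ yields the first contribution $(1-\rho)\rho\,\Delta_{u/\delta_2,u/\delta_1}^{H_F}\mathcal{T}_1$.

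Third, on the complement $\RL^n\setminus[\delta_1,\delta_2]^n$ I bound $\mathbb{P}(\cdot)\le 1$ and evaluate the integral of $|\prod\Ind_{[0,1)}(s_i)-\prod\widehat{g}_m(s_i)|$ explicitly. It is convenient to further split the complement into $\{s:\text{some } s_i<\delta_1\}$ and $\{s\in[\delta_1,\infty)^n:\text{some } s_i>\delta_2\}$. On the second piece every coordinate exceeds $\delta_1$ and at least one exceeds $\delta_2>1$, so $\prod\Ind_{[0,1)}(s_i)=0$ and the integrand collapses to $\prod \widehat{g}_m(s_i)$; Lemma \ref{lem:delta12} evaluates this to $(1-\widehat G_m(\delta_1))^n-(\widehat G_m(\delta_2)-\widehat G_m(\delta_1))^n$. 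The first piece is obtained by subtracting the known integrals over $[\delta_1,\delta_2]^n$ and over the second piece from the total $2(1-(1-\epsilon_m)^n)$ of Corollary \ref{Cor.Int}. Summing each of the resulting terms against $(1-\rho)\rho^n$ via $\sum_{n\ge 1}\rho^n x^n = 1/(1-\rho x)-1$ produces the Theorem~\ref{First error bound} contribution $\tfrac{2\rho\epsilon_m}{1-\rho(1-\epsilon_m)}$ and three correcting terms with rates $1-\delta_1$, $1-\delta_1-\epsilon_m+\epsilon_m(\delta_1)$, and a term involving $\widehat G_m(\delta_2)-1+\epsilon_m = \widehat G_m(\delta_2)-\widehat G_m(1)$; these combine to give $-(1-\rho)\mathcal{T}_2$.

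The main obstacle I anticipate is the bookkeeping of the third and fourth steps: each subregion contributes a geometric series with a different rate, and the cancellation $I_1=I_2$ provided by Corollary \ref{cor:epsilon12} must be applied only inside the core cube (where Lemma \ref{Lem:areasmallFs} needs it) but \emph{not} on the complement, so that the $\widehat G_m(\delta_2)$--dependence surviving in the third summand of $\mathcal{T}_2$ is retained. A secondary care-point is the passage from $H_F^{\ast n}(u/\delta_1)$ to $H_F^{\,n}(u/\delta_1)$ inside Lemma \ref{Lem:areasmallFs}, which is needed so that the geometric series in $\rho H_F(u/\delta_1)$ converges and can be summed in closed form.
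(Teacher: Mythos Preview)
Your proposal is correct and follows essentially the same approach as the paper: you start from the Pollaczek--Khinchine integral bound, split $\RL^n$ into the core cube $[\delta_1,\delta_2]^n$ and its complement, apply Lemma~\ref{Lem:areasmallFs} on the core to produce the $\mathcal{T}_1$ term, and on the complement bound $\mathbb{P}(\cdot)\le 1$ and evaluate the remaining integral via Lemma~\ref{lem:delta12} and Corollary~\ref{Cor.Int} before summing the geometric series to obtain $\frac{2\rho\epsilon_m}{1-\rho(1-\epsilon_m)} - (1-\rho)\mathcal{T}_2$. Your additional sub-splitting of the complement and your explicit bookkeeping remarks are more detailed than the paper's sketch but amount to the same computation.
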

The construction of this particular bound requires the selection of two values $\delta_1$ and
$\delta_2$ provided in Corollary \ref{cor:epsilon12}.
In general, it will not be possible to write down a closed-form expression for such values but
in practice this can be easily determined numerically.
Recall that an explicit expression for the term $\epsilon_m(\delta)$ can be found in Lemma
\ref{epsilon.delta}.
\begin{proof}
Recall that
\begin{align*}
& \left| \widehat F^{*n}(u) - \widehat{F\star G}^{*n}(u)\right|
=\left| (H_F\star U)^{*n}(u) - (H_F\star \widehat G_m)^{*n}(u)\right|\nonumber\\
&\qquad \le \idotsint\limits_{\RL^n} \mathbb{P}(s_1 X_1' + \dots + s_n X_n'\le u ) \left| \prod_{i=1}^n {\Ind}_{[0,1)}(s_i)-\prod_{i=1}^n \widehat{g}_m(s_i)    \right| \dd s_1 \dots \dd s_n.
\end{align*}
Split the last integral in two parts: over $[\delta_1, \delta_2]^n$ and over $[0,\infty)^n\setminus [\delta_1, \delta_2]^n$,  bound the  
first one using Lemma \ref{Lem:areasmallFs} and the second one using  Lemma \ref{lem:delta12} and Corollary \ref{Cor.Int}.
Then apply Pollaczeck--Khinchine formula and sum the geometric series.
\end{proof}

\subsection{Bound for $|H_F-H_\Pi|$}

As stated in subsection \ref{sec: Second error}, the result of Theorem \ref{second error bound} depends on the availability of $|H_F-H_\Pi|$. In the following we state a bound for such a quantity in the case 
where an explicit expression for $|H_F-H_\Pi|$ is not available or too difficult to compute.

\begin{Lemma}\label{Lemma.Dis.Moment}
 Let $\Pi$ be defined as in Definition \ref{Def: step distribution} and
 define  $\Delta_k H_F:=H_F(s_k)-H_F(s_{k-1})$.  Then
\begin{align*}
  \sup_{u\le s<\infty} \left|\overline H_F(s)-\overline H_\Pi(s)\right|
    & \le \sup_{K\le k<\infty}\Delta_k H_F+
      \frac{|\mu_\Pi-\mu_F|\cdot\Exp[S;S> u]}{\mu_\Pi\mu_F}+\frac{\left|\Exp[X;X> s_{K}]-\Exp[S;S> s_{K}]\right|}{\mu_F} ,\\
   \sup_{0<s\le u}\left|H_F(s)-H_\Pi(s)\right|   
    &\le\sup_{0\le k\le K}\Delta_k H_F
    +\frac{|\mu_\Pi-\mu_F|\cdot\Exp[S;S\le u]}{\mu_\Pi\mu_F}+
      \frac{|\Exp[S;S\le u]-\Exp[X;X\le u]|}{\mu_F}.  
   \end{align*}
   Moreover, if $\mu_F=\mu_\Pi$, then
\begin{align*}
   \sup_{u\le s<\infty} \left|\overline H_F(s)-\overline H_\Pi(s)\right| 
    &\le \sup_{K\le k<\infty}\Delta_k H_F+
      \frac{\left|\Exp[X;X> s_{K}]-\Exp[S;S> s_{K}]\right|}{\mu_F}, \\
   \sup_{0<s\le u}\left|H_F(s)-H_\Pi(s)\right|    
    &\le\sup_{0\le k\le K}\Delta_k H_F+
      \frac{|\Exp[S;S\le u]-\Exp[X;X\le u]|}{\mu_F}.  
   \end{align*}
\end{Lemma}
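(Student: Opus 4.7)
The plan is to exploit the piecewise‐constant structure of $H_\Pi$ on the discretization grid $\{s_k\}$ and to reduce the whole supremum to an estimate at the grid points. The starting point is the add-and-subtract identity
\begin{equation*}
H_F(s) - H_\Pi(s)
 = \frac{1}{\mu_F}\bigl(\Exp[X;X\le s] - \Exp[S;S\le s]\bigr)
   + \Bigl(\frac{1}{\mu_F}-\frac{1}{\mu_\Pi}\Bigr)\Exp[S;S\le s],
\end{equation*}
which cleanly splits the error into an \emph{integrated-mass-mismatch} part (first term) and a \emph{mean-mismatch} part (second term). The second term is monotone nondecreasing in $s$, so over $s \le u$ its supremum is attained at the endpoint $u$, yielding the $|\mu_\Pi-\mu_F|\Exp[S;S\le u]/(\mu_\Pi\mu_F)$ contribution.

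For the integrated-mass-mismatch term I would first localize to the grid. Fix $s\in[s_k,s_{k+1})$ with $k\le K$, where $s_K\le u<s_{K+1}$. Since $H_\Pi$ is constant equal to $H_\Pi(s_k)$ on $[s_k,s_{k+1})$ while $H_F$ is monotone,
\begin{equation*}
|H_F(s) - H_\Pi(s)| \le |H_F(s_k) - H_\Pi(s_k)| + \bigl(H_F(s_{k+1}) - H_F(s_k)\bigr) = |H_F(s_k) - H_\Pi(s_k)| + \Delta_{k+1}H_F,
\end{equation*}
which after taking the sup over $k\le K$ produces the $\sup_{0\le k\le K}\Delta_k H_F$ term. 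It then remains to bound $|\Exp[X;X\le s_k] - \Exp[S;S\le s_k]|/\mu_F$ uniformly in $k\le K$ by the value at $u$. For this I would telescope
\begin{equation*}
 \Exp[X;X\le s_k]-\Exp[S;S\le s_k] = \bigl(\Exp[X;X\le u]-\Exp[S;S\le u]\bigr) - \bigl(\Exp[X;s_k<X\le u]-\Exp[S;s_k<S\le u]\bigr),
\end{equation*}
and control the tail-piece difference by noticing that $\Exp[X;s_k<X\le u]/\mu_F = H_F(u)-H_F(s_k)$ telescopes against the discrete counterpart for $\Pi$, so its absolute value is absorbed into the $\sup_{0\le k\le K}\Delta_k H_F$ already produced in the oscillation step.

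The tail bound $\sup_{u\le s<\infty}|\overline H_F(s)-\overline H_\Pi(s)|$ is obtained from exactly the same decomposition applied to the complementary moment distributions $\overline H_F = 1-H_F$ and $\overline H_\Pi=1-H_\Pi$, with the roles of $\Exp[\cdot;\cdot\le u]$ played by $\Exp[\cdot;\cdot>s_K]$ and the relevant grid indices now running over $k\ge K$. The final statement for $\mu_F=\mu_\Pi$ is immediate because the mean-mismatch term vanishes identically.

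The main obstacle I anticipate is Step~3's bookkeeping: the grid $\{s_k\}$ and the weight‐defining sequence $\{w_k\}$ are offset (with $s_k\le w_k\le s_{k+1}$), so the tail-piece difference $\Exp[X;s_k<X\le u]-\Exp[S;s_k<S\le u]$ does not line up cleanly with the $\Delta_j H_F$ increments; ensuring that the resulting bound collapses to a single $\sup_k\Delta_k H_F$ (as stated) rather than a telescoping sum of such jumps, while handling both the $1/\mu_F$ and $1/\mu_\Pi$ normalizations consistently, is where the argument becomes delicate. Once this alignment is handled, the three pieces combine by the triangle inequality into the stated bound.
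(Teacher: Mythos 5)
Your opening decomposition (splitting $H_F-H_\Pi$ into an integrated-mass-mismatch term and a mean-mismatch term) and the localization to grid points, which produces the $\sup_k\Delta_kH_F$ contribution, are both sound and run parallel to the paper's proof. The gap is exactly at the point you flag as "delicate", and your proposed resolution does not work. After telescoping, you must control
\begin{equation*}
\frac{1}{\mu_F}\Big|\Exp[X;s_k<X\le u]-\Exp[S;s_k<S\le u]\Big|
=\Big|\big(H_F(u)-H_F(s_k)\big)-\tfrac{\mu_\Pi}{\mu_F}\big(H_\Pi(u)-H_\Pi(s_k)\big)\Big|
\end{equation*}
uniformly in $k\le K$. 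You claim this "is absorbed into the $\sup_{0\le k\le K}\Delta_kH_F$ already produced in the oscillation step", but this quantity aggregates the discretization error over \emph{all} grid cells between $s_k$ and $u$ (possibly many), not over a single increment, so it is not bounded by one jump of $H_F$. Worse, when $\mu_F=\mu_\Pi$ it equals $\big|(H_F(u)-H_\Pi(u))-(H_F(s_k)-H_\Pi(s_k))\big|$, i.e.\ it is of exactly the same nature as the quantity you are trying to bound, so the step is circular rather than a reduction.

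What the paper uses to close this hole is a cell-by-cell comparison with a sign/no-cancellation structure: writing $H_F(s_k)-H_\Pi(s_k)=\sum_{i\le k}\int\big(\tfrac{t}{\mu_F}-\tfrac{s_i}{\mu_\Pi}\big)\dd F(t)$ over the partition cells attached to the atoms $s_i$, splitting $|t\mu_\Pi-s_i\mu_F|\le\mu_\Pi|t-s_i|+s_i|\mu_\Pi-\mu_F|$, and observing that the cell errors $\int|s_i-t|\dd F(t)$ accumulate monotonically in $k$, so the partial sums for every $k\le K$ are dominated by the single endpoint quantity $\big|\Exp[S;S\le u]-\Exp[X;X\le u]\big|/\mu_F$ (and analogously, with $\overline H$'s, for the tail part with $k\ge K$). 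This monotone accumulation is the key idea your proposal is missing: even if you repaired your telescoped remainder by bounding it with the same cell-by-cell estimate, a plain triangle inequality would give the third term with an extra factor $2$, not the stated bound. So the statement is not proved as proposed; you need to replace the "absorbed into $\sup_k\Delta_kH_F$" step by the interval-wise argument (or otherwise exploit that the per-cell errors do not cancel).
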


Notice that the particular selection of $\Pi$ implies that it is possible to select partitions for which
$\mu_\Pi=\mu_F$.  Also, recall that when $\xi(m)\to\infty$, then $\epsilon_m\to0$, so for $\xi(m)$ sufficiently large, the bound decreases 
as $\left|\Exp[X;X> s_{K}]-\Exp[S;S> s_{K}]\right|$ becomes smaller.  The last is achieved if the tail probability of $H_\Pi$ gets 
\emph{closer} to the tail probability of $H_F$.
\begin{proof}
Since $K\in\nat$ is such that $s_K=u$ then
 \begin{align}
  \left|H_{F}\star \widehat G_m(u) - H_{\Pi}\star \widehat G_m(u)\right|&=
   \left|\int_0^\infty H_F(u/s)d\widehat G(s)-\int_0^\infty H_\Pi(u/s)d\widehat G(s)\right|\nonumber\\
   &\le \int_0^\infty \left|H_F(u/s)-H_\Pi(u/s)\right|d\widehat G(s)\nonumber\\
   &\le \sup_{u\le s<\infty} \left|H_F(s)-H_\Pi(s)\right|\int_0^{1}d\widehat G(s)\nonumber\\ 
   &\qquad\qquad  +\sup_{0<s\le u}\left|H_F(s)-H_\Pi(s)\right|\int_{1}^\infty  \dd\widehat G(s)\nonumber\\
   &= \sup_{u\le s<\infty} \left|\overline H_\Pi(s)-\overline H_F(s)\right|\int_0^{1}d\widehat G(s)\label{eq.IntTail.1}\\
   &\qquad\qquad  +\sup_{0<s\le u}\left|H_F(s)-H_\Pi(s)\right|\int_{1}^\infty d\widehat G(s).\label{eq.IntTail.2}
 \end{align}
Observe that for all $0< s<\infty$ there exist $k$ such
that $t_{k}\le s< t_{k+1}$, so
 \begin{align*}
   \left|\overline H_\Pi(s)-\overline H_F(s)\right|
     &\le  \max\{   \left|\overline H_F(s_{k})-\overline H_\Pi(s_k)\right|,   \left|\overline H_F(s_{k+1})-\overline H_\Pi(s_k)\right|\}. 
 \end{align*}
Using the previous identity we first constructing a bound for \eqref{eq.IntTail.1}. 
 \begin{align*}
   |\overline H_\Pi(s_{k})-\overline H_F(s_k)|
     &\le    |\overline H_\Pi(s_{k})-\overline H_F(s_{k+1})|+|\overline H_F(s_{k})-\overline H_F(s_{k+1})|\\
     &\le    |\overline H_\Pi(s_{k})-\overline H_F(s_{k+1})|+\Delta_{k} H_F,
 \end{align*}
 where $\Delta_{k} H_F:=H_F(s_{k+1})-H_F(s_{k})$, and
 in consequence
 \begin{equation*}
  \sup_{u\le s<\infty}\left|\overline H_\Pi(s)-\overline H_F(s)\right|\le
   \sup_{ K\le k<\infty}|\overline H_\Pi(s_{k})-\overline H_F(s_{k+1})|+\sup_{ K\le k<\infty}\Delta_k H_F.
 \end{equation*} 
Next observe that
 \begin{align*}
   \sup_{K\le k< \infty}|\overline H_\Pi(s_k)-\overline H_F(s_{k+1})|
     &=  \sup_{K\le k< \infty}\left|\sum_{i=k+1}^{\infty} \frac{s_{i}\pi_i}{\mu_\Pi}-\int_{s_{k+1}}^\infty\frac{t dF(t)}{\mu_F}\right|\\       
     &=  \sup_{K\le k< \infty}\left|\sum_{i=k+1}^{\infty}\int_{s_i}^{s_{i+1}}  \left(\frac{s_{i}}{\mu_\Pi}-\frac{t}{\mu_F}\right)dF(t)\right|\\
     &\le \frac{1}{\mu_\Pi\mu_F} \sum_{i=K+1}^{\infty}\int_{s_{i}}^{s_{i+1}} |s_{i}\mu_F-t \mu_\Pi|dF(t)\\     
     &\le \frac{1}{\mu_\Pi\mu_F} \sum_{i=K+1}^{\infty}\int_{s_{i}}^{s_{i+1}} \left(|s_{i}\mu_F-s_{i}\mu_\Pi| + |s_{i}\mu_\Pi-t \mu_\Pi|\right)dF(t)\\
     &\le \frac{|\mu_F-\mu_\Pi|}{\mu_\Pi\mu_F} \sum_{i=K+1}^{\infty}s_{i}\int_{s_{i}}^{s_{i+1}}dF(t)
           +\frac{1}{\mu_F} \sum_{i=K+1}^{\infty}\int_{s_{i}}^{s_{i+1}} |s_{i}-t|dF(t) \\
     &\le \frac{|\mu_\Pi-\mu_F|\Exp[S;S> s_{K}]}{\mu_\Pi\mu_F}+\frac{\left|\Exp[X;X> s_{K}]-\Exp[S;S> s_{K}]\right|}{\mu_F}.
 \end{align*}

Therefore, 
\begin{align*}
  &\sup_{u\le s<\infty} \left|\overline H_\Pi(s)-\overline H_F(s)\right|\\
  &\qquad\le \sup_{K\le k<\infty}\Delta_k H_F+
      \frac{|\mu_\Pi-\mu_F|\cdot\Exp[S;S> u]}{\mu_\Pi\mu_F}+\frac{\left|\Exp[X;X> s_{K}]-\Exp[S;S> s_{K}]\right|}{\mu_F}.
\end{align*}

Our construction for the bound for \eqref{eq.IntTail.2} is analogous.  Note that
 \begin{align*}
   |H_F(s_{k+1})-H_\Pi(s_k)|
     &\le    |H_F(s_{k+1})-H_F(s_k)|+|H_F(s_{k})-H_\Pi(s_k)|\\
     &\le    \Delta_{k} H_F+|H_F(s_{k})-H_\Pi(s_k)|,
 \end{align*}
so 
\begin{equation*}
  \sup_{0<s\le u}\left|H_F(s)-H_\Pi(s)\right|\le
   \sup_{0\le k\le K}\Delta_k H_F+\sup_{0\le k\le K}|H_F(s_{k})-H_\Pi(s_k)|,
 \end{equation*} 
where $s_0=\inf\{s:F(s)>0\}$.  Next observe that
 \begin{align*}
   \sup_{0\le k\le K}|H_F(s_{k})-H_\Pi(s_k)|
     &=  \sup_{0\le k\le K}\left|\int_0^{s_{k}}\frac{t dF(t)}{\mu_F}-\sum_{i=1}^{k} \frac{s_{i}\pi_i}{\mu_\Pi}\right|\\       
     &=  \sup_{0\le k\le K}\left|\sum_{i=1}^{k}\int_{s_i-1}^{s_{i}}  \left(\frac{t }{\mu_F}-\frac{s_{i}}{\mu_\Pi}\right)dF(t)\right|\\
     &\le \frac{1}{\mu_\Pi\mu_F} \sum_{i=1}^{K}\int_{s_{i-1}}^{s_{i}} |t\mu_\Pi-s_{i}\mu_F|dF(t)\\     
     &\le \frac{1}{\mu_\Pi\mu_F} \sum_{i=1}^{K}\int_{s_{i-1}}^{s_{i}} |t\mu_\Pi-s_{i}\mu_\Pi|+\left(|s_{i}\mu_\Pi-s_{i}\mu_F|\right)dF(t)\\
     &\le \frac{1}{\mu_F} \sum_{i=1}^{K}\int_{s_{i-1}}^{s_{i}} |s_{i}-t|dF(t) 
        +\frac{|\mu_\Pi-\mu_F|}{\mu_\Pi\mu_F} \sum_{i=1}^{K}s_{i}\int_{s_{i-1}}^{s_{i}}dF(t)\\
     &\le \frac{|\Exp[S;S\le s_K]-\Exp[X;X\le s_K]|}{\mu_F}+\frac{|\mu_\Pi-\mu_F|\cdot\Exp[S;S\le s_K]}{\mu_\Pi\mu_F}.
 \end{align*}

Therefore, 
\begin{align*}
  &\sup_{0\le s\le u/\delta} \left|H_F(s)-H_\Pi(s)\right|\\
  &\qquad\le \sup_{0\le k\le K}\Delta_k H_F+
      \frac{|\Exp[S;S\le u]-\Exp[X;X\le u]|}{\mu_F}+\frac{|\mu_\Pi-\mu_F|\cdot\Exp[S;S\le u]}{\mu_\Pi\mu_F}.
\end{align*}

\end{proof}

\end{document}